\documentclass[onefignum]{siamart190516}



\usepackage{lipsum}
\usepackage{amsfonts}
\usepackage{graphicx}
\usepackage{epstopdf}
\usepackage{algorithmic}
\ifpdf
  \DeclareGraphicsExtensions{.eps,.pdf,.png,.jpg}
\else
  \DeclareGraphicsExtensions{.eps}
\fi


\newcommand{\pp}{\partial_+}
\newcommand{\pn}{\partial_-}
\def\tilde{\widetilde}

\def\TS{\textstyle}

\def\nn{\nonumber}
\def\ol{\overline}

\def\dbyd#1{\TS{\frac{\partial}{\partial#1}}}

\newcommand{\ga}{\gamma}

\newcommand{\RR}{\mathbb{R}}
\newcommand{\beq}{\begin{equation}}
\newcommand{\eeq}{\end{equation}}
\newcommand{\eps}{\varepsilon}
\newcommand{\dd}{{\rm d}}

\newsiamremark{remark}{Remark}
\newsiamremark{hypothesis}{Hypothesis}
\crefname{hypothesis}{Hypothesis}{Hypotheses}
\newsiamthm{claim}{Claim}
\newsiamthm{condition}{Condition}
\newsiamthm{example}{Example}
\newsiamthm{assumption}{Assumption}
\numberwithin{equation}{section}
\numberwithin{figure}{section}

\makeatletter \@addtoreset{equation}{section} \makeatother

\headers{Singularities and Global Continuous Solutions}{G. Chen, G.-Q. Chen, and S. Zhu}

\title{Formation of Singularities
and Existence of \\ Global Continuous Solutions \\
for the Compressible Euler Equations
\thanks{Submitted to the editors DATE.
\funding{The research of Geng Chen was supported in part by the US National Science Foundation Grant DMS-1715012.
The research of Gui-Qiang G. Chen was supported in part by
the UK
Engineering and Physical Sciences Research Council Awards
EP/L015811/1 and EP/V008854,
and the Royal Society--Wolfson Research
Merit Award WM090014 (UK). The third author was supported in part by the Australian Research Council under grant DP170100630, the National Natural Science Foundation of China under Grant 12101395, the Royal Society-Newton International Fellowships NF170015, the Newton International Fellowships Alumni AL/201021 and AL/211005, and Shanghai Frontier Science Research Center for Modern Analysis.}}}
%

\author{Geng Chen \thanks{ School of Mathematics, University of Kansas, Lawrence, KS 66045, USA
  (\email{gengchen@ku.edu}).}
\and Gui-Qiang G. Chen \thanks{ Mathematical Institute, University of Oxford, Oxford  OX2 6GG, UK
  (\email{chengq@maths.ox.ac.uk}).}
\and Shengguo Zhu \thanks{ School of Mathematical Sciences, and MOE-LSC, Shanghai Jiao Tong University, Shanghai 200240, China
  (\email{zhushengguo@sjtu.edu.cn}).}}

\usepackage{amsopn}



\ifpdf
\hypersetup{
  pdftitle={Formation of Singularities
and Existence of \\  Global Continuous Solutions  \\ for the Compressible Euler Equations},
  pdfauthor={G. Chen, G.-Q.  Chen,  and S. Zhu}
}
\fi

\oddsidemargin 0.60in%
\evensidemargin 0.60in%

\begin{document}

\maketitle

\begin{abstract}
We are concerned with the formation of singularities and the existence of global continuous
solutions of the Cauchy problem for the one-dimensional nonisentropic Euler
equations for compressible fluids.
For the isentropic Euler equations, we pinpoint a necessary and sufficient
condition for the formation of singularities of solutions with large initial data that allow
a far-field vacuum --- there exists a compression in the initial data.
For the nonisentropic Euler equations, we identify a sufficient condition for the formation of singularities
of solutions with large initial data that allow a far-field
vacuum --- there exists a strong compression in the initial data.
Furthermore, we identify two new phenomena --- decompression and de-rarefaction --- for the
nonisentropic Euler flows, different from the isentropic flows, via constructing two respective solutions.
For the decompression phenomenon, we construct a first global continuous nonisentropic solution, even though
the initial data contain a weak compression, by solving a backward Goursat problem, so that
the solution is smooth, except on several characteristic curves across which the solution has a weak
discontinuity ({\it i.e.}, only Lipschitz continuity).
For the de-rarefaction phenomenon, we construct a continuous nonisentropic solution whose initial data
contain isentropic rarefactions ({\it i.e.}, without compression)
and  a locally stationary varying entropy profile,
for which the solution still forms a shock wave in a finite time.
\end{abstract}

\begin{keywords}
  Euler equations, nonisentropic, isentropic, compressible flow, continuous solution,
singularity, shock, far-field vacuum,  decompression, de-rarefaction
\end{keywords}

\begin{AMS}
76N15, 35L65, 35L67, 35Q31, 35A01, 35B44
\end{AMS}

\section{Introduction}
The Euler equations are widely used to describe inviscid compressible fluid flows, particularly in gas dynamics,
which consist of the conservation laws of mass, momentum, and energy in general.
The nonisentropic Euler equations in Lagrangian coordinates $(t,x)\in\mathbb{R}^+\times \mathbb{R}$
in one space-dimension take the following form:
\begin{align}
&v_t-u_x =0,\label{lagrangian1q}\\
&u_t+p_x=0,\label{lagrangian2q}\\
&\textstyle\big(e+\frac{1}{2}u^2\big)_t+(u\,p)_x=0, \label{lagrangian3q}
\end{align}
where $\rho$ is the density, $v=\rho^{-1}$ the specific volume,
$p$ the pressure, $u$ the velocity, and $e$ the specific  internal energy.
The system is closed by the constitutive relations governed by
\beq
T\,\dd S = \dd e + p\,\dd v,
\label{2TD}
\eeq
where $S$ is the entropy and $T$ the temperature.
The solutions ($C^1$ or weak) for the compressible Euler equations
in Lagrangian and Eulerian coordinates are equivalent;
see \cite{Chen1992,Dafermos2010,Wagner1987}.

For clarity of presentation, in this paper, we focus on the case that the gas is ideal polytropic
so that
\[
 p\, v=R\,T, \qquad  e=c_v\,T
\]
with ideal gas constant $R$ and specific heat $c_v$. This implies
\beq
 p=K\,e^{\frac{S}{c_v}}\, v^{-\gamma},\quad  e=\frac{p v}{\gamma-1}\,,
\label{introduction 3}
\eeq
with adiabatic gas constant $\gamma>1$, and
$K$ and $c_v$ as positive constants ({\it cf}. \cite{courant,smoller}).

In particular, for $C^1$ solutions,
(\ref{lagrangian3q}) is equivalent to the conservation of entropy \cite{smoller}:
\beq\label{s con}
 S_t=0,
\eeq
so that, for any $x\in \mathbb{R}$,
\[
S(t,x)\equiv S(0,x)=:S(x).
\]

If entropy $S$ is a constant, then the flow is isentropic, and
equations (\ref{lagrangian1q})--(\ref{lagrangian2q}) themselves become a closed system,
known as the $p$--system:
\begin{align}
&v_t-u_x=0,\label{p1}\\
&u_t+p_x=0, \label{p2}
\end{align}
with
\beq\label{p3}
   p=K\,v^{-\gamma} \qquad \mbox{for $\gamma>1$},
\eeq
where, without loss of generality, we still use $K$ to denote the constant in pressure.

The compressible Euler equations are the most fundamental prototype
of hyperbolic conservation laws:
\beq
  {\bf u}_t+{\bf f}({\bf u})_x=0, \qquad
  {\bf u}\in\mathbb{R}^n,\,\,\,  {\bf f}:\mathbb{R}^n\to\mathbb{R}^n.
\label{conservation laws}
\eeq
The solutions of \eqref{conservation laws}, even starting out with smooth initial data,
often form physical discontinuities such as shock waves in a finite time.
Then the mathematical analysis of the solutions becomes delicate, owing
to the lack of regularity when the singularity forms.
In this paper, our analysis is focused on the formation of singularities
and the existence of singularity-free solutions
for both the isentropic Euler equations with large initial data
that allow a far-field vacuum
and the nonisentropic Euler equations with nontrivial large initial data.
In particular, we identify new phenomena --- decompression and de-rarefaction --- for
the nonisentropic Euler flows, different from the isentropic flows,
via constructing two respective solutions.

The study of breakdown of $C^1$ solutions for \eqref{conservation laws}
has a considerable history; see \cite{Dafermos2010} and the references therein.
The breakdown for scalar equations goes back to Stokes \cite{Stokes}, and was
solved for $2\times 2$ systems by Lax~\cite{lax2} when the solution
is in a strictly hyperbolic region.
These results state that, in the presence of genuine nonlinearity,
nontrivial initial data (even with small oscillation)
often lead to the blowup of the gradient of the solution in a finite time.
For larger systems, similar results are available,
provided that the initial data are
small ({\it cf}. \cite{Fritzjohn,Lidaqian,liu}).

For the initial data of large oscillation uniformly away from the vacuum for the compressible Euler equations,
a fairly good understanding of the formation of singularities based on the fundamental
work of Lax \cite{lax2} has been obtained; see \cite{G3,G9, CPZ,G8}.

First, for the isentropic Euler equations (the $p$--system), the Riccati equations
established by Lax \cite{lax2} provide a base point for the analysis of the
formation of singularities,
which  lead to an equivalent condition
when $\gamma\geq 3$, together with the existence theory of $C^1$ solutions.
This result states that
{\it the $C^1$ solution breaks down if and only if the non-vacuum initial data contain a compression.}
On the other hand, in order to obtain the same result when $1<\gamma<3$,
it requires a sufficiently good time-dependent lower bound estimate of the density.
In fact, when the density approaches zero as $t$ is large,
which may happen even when the initial density is uniformly away from zero,
the coefficients of the quadratic term in the Riccati equations may approach zero.
The time-dependent lower bound estimates of the density, obtained in \cite{G9,CPZ,CPZ2}
under the assumption
that the initial density has a constant positive lower bound,
 lead to the claim of the same equivalent initial condition
on the result for the formation of singularities for the case $1<\gamma<3$,
as for the case $\gamma\geq 3$.

For the nonisentropic Euler equations,
a similar Riccati system in \cite{G3} was used to give
{\it a sufficient condition on the formation of singularities for the $C^1$ nonisentropic solution
with the initial data that are uniformly away from the vacuum}.
To achieve this, it also requires the constant upper bound estimate of the density  and velocity ({\it cf}. \cite{G8})
and the time-dependent lower bound of the density ({\it cf}. \cite{G9,CPZ}).

However, there are still several fundamental open issues without clear answers yet
for the large data problems in the one-dimensional case, especially when the large
initial data allow a far-field vacuum.
In this paper, we establish a fairly complete theory on the formation of singularities
and the existence of nontrivial singularity-free solutions.
More precisely, we address the following three issues:

\smallskip
\paragraph{{\rm (i)}.  The formation of singularities of solutions with initial data that allow
a far-field vacuum}
For all known results of the formation of singularities in \cite{G9,CPZ} for both the isentropic and nonisentropic Euler equations
with large initial data,
it requires a positive lower bound of the initial density,
which leads to a time-dependent lower bound of the  density for later time.
In this paper, we provide a new method to extend the theory to more general initial density profiles
that allow a far-field vacuum such as $\rho(0,x)\in (C^1\cap L^1)(\mathbb{R})$: For the isentropic case,
we pinpoint a necessary and sufficient
condition for the formation of singularities --- the initial data contain a compression;
while, for the nonisentropic case,
we identify a sufficient condition for the formation of
singularities --- the initial data contain a {\it strong} compression.

\smallskip
\paragraph{{\rm (ii)}.  The phenomenon of decompression}
Different from the isentropic flow, a {\it weak} compression for the nonisentropic flow can be cancelled
during the wave interactions;
see also the examples including contact discontinuities and compression waves ({\it cf}. \cite{G6,youngblake1}).
An open question has been whether there exists a global $C^1$ solution including
an initial {\it weak} compression for the nonisentropic case.
In this paper, we answer this question and provide a first global continuous
solution for the nonisentropic case
when the initial data contain the {\it weak} compression,
by developing a method via solving a backward Goursat problem.
The solution is smooth, except on several characteristics across which the solution has a weak discontinuity
({\it i.e.}, only Lipschitz continuity).
The method developed here for the construction is new, which provides a new approach to the constructions
of other solutions with similar features.

\smallskip
\paragraph{{\rm (iii)}.  The phenomenon of  de-rarefaction}
We succeed to construct a first continuous solution
with nonisentropic rarefactive initial data, so that the solution forms a shock wave in a finite time.
More precisely, the initial data include a pair of forward and backward isentropic rarefaction simple
waves and a locally stationary solution with varying entropy.

Moreover, in order to show what types of singularities the solution may form after the breakdown,
we revisit some earlier results
and show that, under some additional assumptions,
all the singularities in these previous results
must be shock waves.

\smallskip
The formation of singularities of solutions has also been studied for the compressible Euler equations
and systems of hyperbolic conservation laws in multidimensional space variables under various assumptions
on the initial data;
see \cite{ Chris, shuang, Dafermos2010, Yachun,  Speck, Makino, Rammaha, Sideris} and the references therein.

\smallskip
This paper is divided into six sections and one appendix.
In \S 2, we introduce some basic notations and necessary equations.
In \S 3, we establish the theorems for the singularity formation  of the $C^1$ solution with large data
for the compressible Euler equations  allowing a far-field vacuum.
In \S 4, we present a new phenomenon -- decompression -- of the nonisentropic Euler flow
by constructing a global continuous solution even starting from the initial data that contain a weak compression.
In \S 5, we construct a continuous solution with nonisentropic rarefactive initial data, so that the solution forms a shock wave
in a finite time.
In \S 6, we give two additional remarks.
First, under some more restrictive assumption than \eqref{yq1} in Theorem \ref{p_sing_thm} on the initial far-field density,
we give a better estimate on the lower bound of the density than that of \S 3.
Then we  give a remark to show that, under some proper assumptions,
the singularity caused by the initial compression is actually a shock wave.
Finally, we give an appendix to present a local-in-time well-posedness theorem in some angular domains,
which is used in the proofs in \S 4--\S5.

\section{Basic setup}
In this section, we provide some basic notations, equations, and estimates for $C^1$ solutions
of the Euler equations \eqref{lagrangian1q}--\eqref{lagrangian3q} with initial data:
\beq
(v,u,S)|_{t=0}=(v_0, u_0,S_0)(x) \label{initial1}
\eeq
for subsequent development.
The results in this section have been basically established in \cite{G9, CPZ,lax2},
with some slight modification and improvement.

To make our statement self-contained,
we first introduce the notion of $C^1$ solutions.

\begin{definition}\label{ 2.1-1}
Let $T>0$. A vector function $(v, u, S)(t,x)$
is called
a $C^1$ solution of the Euler equations \eqref{lagrangian1q}--\eqref{lagrangian3q}
on $(0, T ) \times \mathbb{R}$  if
\begin{equation*}
v>0, \quad (v,u,S) \in \big(C^1([0, T )\times \mathbb{R})\big)^3,
\end{equation*}
and equations \eqref{lagrangian1q}--\eqref{lagrangian3q} are satisfied pointwise
on $(0,T) \times \mathbb{R}$.
It is called a $C^1$ solution of the Cauchy problem \eqref{lagrangian1q}--\eqref{lagrangian3q} and \eqref{initial1}
if it is a $C^1$ solution of equations \eqref{lagrangian1q}--\eqref{lagrangian3q} on $(0, T ) \times \mathbb{R}$
and satisfies its initial data  \eqref{initial1} in $C^0$.
\end{definition}

\begin{remark}
Let $(v, u, S)$ be a $C^1$ solution of the Euler equations \eqref{lagrangian1q}--\eqref{lagrangian3q}
on $(0, T ) \times \mathbb{R}$.
Then
$v$ satisfies
$$
0<v<\infty, \qquad v \in C^1([0, T )\times \mathbb{R}).
$$
Since $0<v(t,x)<\infty$ for any $(t,x)\in [0,T]\times \mathbb{R}$,
the Lagrangian coordinate transformation is always invertible
for any $(t,x)\in [0,T]\times \mathbb{R}$.
\end{remark}

Denote
\begin{align}
&\hat{S}:=e^{\frac{S}{2c_v}}>0, \quad
c:=\sqrt{-p_v}=\sqrt{K\,\gamma}\,v^{-\frac{\gamma+1}{2}}\hat{S},
  \label{m def} \\
&a := \int^\infty_v \frac{c}{\hat{S}}\,{\dd}v
         = \TS\frac{2\sqrt{K\gamma}}{\gamma-1}\,
v^{-\frac{\gamma-1}{2}}>0,\label{z def}
\end{align}
where $c$ is the
sound speed.
Then a direct calculation shows
\beq\label{tau p c}
  v=K_{v}\,a^{-\frac{2}{\gamma-1}},\qquad
  p=K_p\, \hat{S}^2\, a^{\frac{2\gamma}{\gamma-1}},\qquad
  c=c(a,\hat{S})=K_c\, \hat{S}\, a^{\frac{\gamma+1}{\gamma-1}},
\eeq
with positive constants
\beq
 K_v:=\Big(\frac{2\sqrt{K\gamma}}{\gamma-1}\Big)^\frac{2}{\gamma-1}\,,
\quad
 K_p:=K\,K_v^{-\gamma},
\quad  K_c:=\TS\sqrt{K\gamma}\,K_v^{-\frac{\gamma+1}{2}}.
\label{Kdefs}
\eeq
Then, for $C^1$ solutions, problem (\ref{lagrangian1q})--(\ref{lagrangian3q}) and (\ref{initial1}) is equivalent to
\begin{equation}\label{fulleuler1}
\begin{cases}
  a_t+\frac{c}{\hat{S}}\,u_x=0, \\[4pt]
  u_t+\hat{S}\,c\,a_x+\frac{2p}{\hat{S}}\,\hat{S}_x=0,\\[3pt]
  \hat{S}_t=0,\\[3pt]
(a, u, \hat{S})(x, 0)=(a_0, u_0, \hat{S}_0)(x)=(a(v_0(x)), u_0(x), e^{\frac{S_0(x)}{2c_v}}).
\end{cases}
\end{equation}
Note that $\hat{S}=\hat{S}(x)=\hat{S}_0(x)$ is independent of $t$.

We denote the directional derivatives as
\[
\pp := \dbyd t+c\,\dbyd x, \qquad   \pn := \dbyd t-c\,\dbyd x\,,
\]
along the corresponding two characteristic directions:
\beq\label{pmc_full}
  \frac{\dd x^+}{\dd t}=c, \qquad \frac{\dd x^-}{\dd t}=-c.
\eeq
Denote the corresponding Riemann variables:
\beq
   s:=u+\hat{S}\,a,\qquad r:=u-\hat{S}\,a.
\label{r_s_def}
\eeq
Then $r$ and $s$ satisfy
\begin{align}
 \pp s=
 \pn r=\frac{c}{2\gamma}\,\frac{\hat{S}_x}{\hat{S}}\,(s-r).
\label{s_eqn}
\end{align}

\subsection{Isentropic case}
Now we introduce the Riccati equations for the gradient variables,
which are the basis for our proof of the formation of singularities.
We start with the isentropic case.

Without loss of generality, for the isentropic case, we always assume
\[
\hat{S}(x)=1,
\]
that is,
\beq
(v,u,S)|_{t=0}=(v_0(x), u_0(x), 0).
\label{initial1.2}
\eeq
For this case, the Riemann invariants:
\beq\label{p_sr}
s=u+a,  \qquad r=u-a,
\eeq
with
$a$ defined in \eqref{z def}, are constant
along the forward and backward characteristics, respectively:
\beq\label{srconq}
 \partial_+s=0, \qquad  \partial_- r=0.
\eeq
Furthermore, the gradient variables:
\beq\label{albeis}
s_x=u_x+a_x,\qquad r_x=u_x-a_x
\eeq
can be used to describe the compression and rarefaction phenomena of nonlinear waves.

It is observed in Lax \cite{lax2} that, by adding an integrating factor to both $s_x$ and $r_x$
to form  a new pair of gradient variables $\alpha$ and $\beta$:
\beq\label{albeis_yq}
\alpha:= a^{\frac{\gamma+1}{2(\gamma-1)}}s_x, \qquad  \beta:= a^{\frac{\gamma+1}{2(\gamma-1)}}r_x,
\eeq
then $\alpha$ and $\beta$ satisfy two ``decoupled'' Riccati equations:
\begin{align}\label{p_y_eq-1}
  \partial_+ \alpha = - b \, \alpha^2, \qquad
  \partial_- \beta = - b \, \beta^2,
  \end{align}
where
\begin{align}\label{a2}
  b =b(a)&:= \hat{K}\,{\TS\frac{\gamma+1}{2(\gamma-1)}}\,
        a^{\frac{3-\gamma}{2(\gamma-1)}}=\tilde{K} \rho^{\frac{3-\gamma}{4}}
\end{align}
with positive constants $\hat{K}$ and $\tilde{K}$.
Here we notice that the integrating factor $a^{\frac{\gamma+1}{2(\gamma-1)}}$
is the major term in $\sqrt{c}$, where $c$ is the sound speed given by
\begin{equation}\label{localsound}
c=\sqrt{K \gamma}\, \rho^{\frac{\gamma+1}{2}}.
\end{equation}

When $1<\gamma<3$,
in order to show the formation of singularities in a finite time
which is equivalent to there existing some initial
compression:
$$
\inf_{x\in \mathbb{R}}\{\alpha,\beta\}(0,x)<0,
\quad\mbox{or equivalently}\quad \inf_{x\in \mathbb{R}}\{s_x, r_x\}(0,x)<0,
$$
it requires some sufficiently good time-dependent lower bound of the density:
\beq\label{smc}
\int_0^\infty b(t, x(t))\,\dd t= \tilde{K}\int_0^\infty \rho^{\frac{3-\gamma}{4}}(t, x(t))\,\dd t=\infty
\eeq
along any forward or backward characteristic $x(t)$.
Note that the example with density decaying to zero as $t\rightarrow \infty$
does exist in some cases \cite{courant}, so that it is impossible to find
a positive time-independent lower bound of the density in general.

When the initial density has a uniform positive lower bound,
the time-dependent lower bound of the density in its optimal order
has helped the proof of \eqref{smc} in \cite{G9,CPZ}.
In this paper, we present an approach to establish the necessary and sufficient result
for the formation of singularities, even for the initial data
that allow a far-field vacuum.

Finally, we define the {\it compression and rarefaction
characters}
for the isentropic case
which generalize the definition of compression and rarefaction simple waves
to the local sense.

\begin{definition}
\label{def1}
The local compression/rarefaction characters for a $C^1$ solution of the isentropic Euler equations \eqref{p1}--\eqref{p2} are
\[
\begin{array}{lllll}
	\text{Forward rarefaction} &\qquad \text{iff}& s_x>0 &\qquad \text{iff} \quad s_t< 0,\\
	\text{Forward compression} &\qquad \text{iff} & s_x<0 &\qquad \text{iff} \quad s_t> 0,\\
	\text{Backward rarefaction} &\qquad \text{iff}& r_x>0&\qquad \text{iff} \quad r_t> 0,\\
	\text{Backward compression} &\qquad\text{iff}&r_x<0&\qquad \text{iff} \quad r_t< 0.
\end{array}
\]
\end{definition}

Although this definition was not exactly stated in Lax \cite{lax2},
the result for some cases of $2\times2$ hyperbolic conservation laws
can be explained as follows: {\it The singularity forms in a finite time if and only if
there exists some backward or forward compression in the sense of Definition {\rm \ref{def1}}}.
Thus, this definition of the compression and rarefaction
gives a clean cut for the formation of singularities.

\subsection{Nonisentropic case}
We now review the Riccati equations for the nonisentropic Euler equations (see also \cite{G3, linliuyang}).
Define
\begin{align}
 &\xi:=-\frac{s_t}{c}=s_x-\frac{1}{\gamma}\hat{S}_x a=
   u_x+\hat{S}a_x+\frac{\gamma-1}{\gamma}\,\hat{S}_x\, a, \label{def-alpha}\\
&\zeta :=\frac{r_t}{c}=r_x+\frac{1}{\gamma}\hat{S}_x a=
   u_x-\hat{S}a_x-\frac{\gamma-1}{\gamma}\,\hat{S}_x\, a,
\label{def_beta}
\end{align}
which are consistent with the definition in \eqref{albeis} for isentropic solutions when $\hat{S}=1$.
In fact, more intuitively, the definition of $(\xi,\zeta)$
can be equivalently given by the  following lemma (see also \cite{G3,G6}).

\begin{lemma}\label{riclemma}
Any $C^1$ solution
of \eqref{lagrangian1q}--\eqref{lagrangian3q} satisfies
\beq
\displaystyle
\xi=-\frac{s_t}{c}=-\frac{\partial_-{u}}{c}=-\frac{\partial_-{p}}{c^2},
\qquad
\zeta=\frac{r_t}{c}=\frac{\partial_+{u}}{c}=-\frac{\partial_+{p}}{c^2},\label{forward}
\eeq
and
\beq
\partial_+\xi=k_1\big(k_2 (3\xi+\zeta)+\xi\zeta-\xi^2\big),
\qquad
\partial_-\zeta=k_1\big(-k_2 (\xi+3\zeta)+\xi\zeta-\xi^2\big),
\label{frem1}
\eeq
where
\beq
k_1=\frac{(\gamma+1)K_c}{2(\gamma-1)} a^{\frac{2}{\gamma-1}}, \qquad
k_2=\frac{\gamma-1}{\gamma(\gamma+1)}a\, \hat{S}_x. \label{k def}\
\eeq
\end{lemma}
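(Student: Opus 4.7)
The lemma has two claims: the three equivalent formulas for $\xi$ and $\zeta$ in \eqref{forward}, and the Riccati-type equations \eqref{frem1}. The first is a direct substitution from the evolution equations. The second follows from differentiating the transport law \eqref{s_eqn} for $s$ (and the analogous one for $r$) along the opposite characteristic family, using the commutator identity $[\partial_+,\partial_-] = -2c_t \partial_x$.

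For the first claim, starting from the definition $\xi = s_x - \frac{1}{\gamma}\hat{S}_x a$ in \eqref{def-alpha}, I show $\xi = -s_t/c = -\partial_- u/c = -\partial_- p/c^2$. Since $\hat{S}_t = 0$ and $s = u + \hat{S} a$, the first equation in \eqref{fulleuler1} yields $a_t = -(c/\hat{S}) u_x$, and \eqref{lagrangian2q} gives $u_t = -p_x$; hence $s_t = u_t + \hat{S} a_t = -p_x - cu_x$, so $-s_t/c = u_x + p_x/c$. At the same time, $\partial_- u = u_t - cu_x = -p_x - cu_x$, giving $-\partial_- u/c = u_x + p_x/c$. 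For the pressure expression, $p = p(v,\hat{S})$ with $p_v = -c^2$ and $\hat{S}_t = 0$, together with \eqref{lagrangian1q}, yields $p_t = -c^2 v_t = -c^2 u_x$; hence $\partial_- p = -c^2 u_x - cp_x$ and $-\partial_- p/c^2 = u_x + p_x/c$. The three expressions coincide, and the consistency with \eqref{def-alpha} follows from comparing with $s_x = u_x + \hat{S}_x a + \hat{S} a_x$ and using \eqref{tau p c}. The statement for $\zeta$ is obtained by the symmetry $s \leftrightarrow r$, $\partial_- \leftrightarrow \partial_+$.

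For the second claim, set $F := \frac{c\hat{S}_x a}{\gamma}$, so that \eqref{s_eqn} combined with $s - r = 2\hat{S}a$ gives $\partial_+ s = \partial_- r = F$. A short computation using $\hat{S}_t = 0$ and \eqref{fulleuler1} shows $\partial_- s = -2c\xi - F$; applying $\partial_+$ and using $\partial_+\partial_- s = \partial_-\partial_+ s - 2c_t s_x = \partial_- F - 2c_t s_x$ (from the commutator) yields $\partial_+(c\xi) = -F_t + c_t s_x$, equivalently $\partial_+\xi = (-F_t + c_t s_x - \xi\partial_+ c)/c$. It remains to reduce each term on the right to a polynomial in $(\xi,\zeta)$ with $a$- and $\hat{S}_x$-dependent coefficients. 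For this I use the algebraic identities $u_x = (\xi+\zeta)/2$ and $\hat{S}a_x = (\xi-\zeta)/2 - \frac{\gamma-1}{\gamma}\hat{S}_x a$ (from adding and subtracting \eqref{def-alpha} and \eqref{def_beta}), together with the chain-rule identities $c_t = \frac{\gamma+1}{\gamma-1}\frac{c}{a} a_t$ and $c_x = \frac{c\hat{S}_x}{\hat{S}} + \frac{\gamma+1}{\gamma-1}\frac{c}{a} a_x$ derived from $c = K_c\hat{S} a^{(\gamma+1)/(\gamma-1)}$. Collecting quadratic and $\hat{S}_x$-linear contributions, the quadratic piece simplifies to $\frac{(\gamma+1)c}{2(\gamma-1)a\hat{S}}(\xi\zeta-\xi^2) = k_1(\xi\zeta-\xi^2)$, and the $\hat{S}_x$-linear piece (after cancelling a $\frac{\gamma+1}{2\gamma(\gamma-1)}$ against a $\frac{1}{\gamma-1}$ and a $\frac{1}{\gamma}$) collapses to $\frac{c\hat{S}_x}{2\gamma\hat{S}}(3\xi+\zeta) = k_1 k_2 (3\xi+\zeta)$, producing the stated equation for $\partial_+\xi$. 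The equation for $\partial_-\zeta$ is derived analogously by applying $\partial_-$ to $\partial_+ r = 2c\zeta - F$; the roles of $\xi,\zeta$ and of the two characteristic families are interchanged, producing the sign flip in the $k_2$ contribution.

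The main obstacle is organizing the bookkeeping so that no $\hat{S}_{xx}$ terms appear in the final formula. The commutator route above handles this automatically, since $\hat{S}_x$ enters only through $F$ and is differentiated at most once in total throughout the calculation. A more naive route that differentiates $\xi$ in the form $u_x + \hat{S} a_x + \frac{\gamma-1}{\gamma}\hat{S}_x a$ directly along $\partial_+$ would generate $c\hat{S}_{xx} a$ contributions whose cancellation is far less transparent. Matching the explicit constants $k_1$ and $k_2$ in \eqref{k def} then reduces to careful tracking of the power-law exponents in $c(a,\hat{S})$.
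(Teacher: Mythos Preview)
Your argument is correct. The identities in \eqref{forward} follow exactly as you describe: from $s_t=u_t+\hat S a_t=-p_x-cu_x$ and $p_t=-c^2u_x$ one gets $-s_t/c=-\partial_-u/c=-\partial_-p/c^2=u_x+p_x/c$, and the relation $K_p/K_c=(\gamma-1)/(2\gamma)$ (from \eqref{Kdefs}) shows $u_x+p_x/c=u_x+\hat S a_x+\tfrac{\gamma-1}{\gamma}\hat S_x a=\xi$. For \eqref{frem1}, your commutator route is sound: with $F=\tfrac{c\hat S_x a}{\gamma}$ and $\partial_-s=-2c\xi-F$, the identity $[\partial_+,\partial_-]=-2c_t\partial_x$ gives $\partial_+(c\xi)=c_ts_x-F_t$, and substituting $u_x=\tfrac{\xi+\zeta}{2}$, $\hat S a_x=\tfrac{\xi-\zeta}{2}-\tfrac{\gamma-1}{\gamma}\hat S_x a$, $c_t=\tfrac{\gamma+1}{\gamma-1}\tfrac{c}{a}a_t$ yields precisely $k_1(\xi\zeta-\xi^2)$ for the quadratic part and $k_1k_2(3\xi+\zeta)$ for the $\hat S_x$-linear part after the algebra (the key reduction being $-\tfrac{\gamma+1}{2\gamma(\gamma-1)}+\tfrac{1}{\gamma-1}=\tfrac{1}{2\gamma}$). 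Your observation that only $F_t$, not $F_x$, enters---so no $\hat S_{xx}$ is ever produced---is the genuine simplification here.

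The paper itself does not supply a proof of this lemma; it merely states the result and refers to \cite{G3} and \cite{G6} for the derivation. So there is no ``paper's own proof'' to compare against directly. In \cite{G3} the equations \eqref{frem1} are obtained by differentiating $\xi$ and $\zeta$ directly and tracking all terms, which is the route you call ``more naive''; your commutator organization is a cleaner packaging of the same computation and makes the absence of $\hat S_{xx}$ in the final answer structurally evident rather than a cancellation to be checked.
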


In order to transform (\ref{frem1}) into ``decoupled'' equations,
we introduce new variables $(\alpha, \beta)$ transformed
from $(\xi, \zeta)$, analogous to the isentropic solutions:
\begin{align}
  \alpha &:= \hat{S}^{-\frac{3(3-\gamma)}{2(3\gamma-1)}}\,
       a^{\frac{\gamma+1}{2(\gamma-1)}}\,
       \big(s_x - {\TS\frac{2}{3\gamma-1}}\,\hat{S}_x\,a\big)
       =\hat{S}^{-\frac{3(3-\gamma)}{2(3\gamma-1)}}\,
       a^{\frac{\gamma+1}{2(\gamma-1)}}\,
       \big(\xi+ {\TS\frac{\gamma-1}{\gamma(3\gamma-1)}}\,\hat{S}_x\,a\big), \nn\\
  \beta &:= \hat{S}^{-\frac{3(3-\gamma)}{2(3\gamma-1)}}\,
       a^{\frac{\gamma+1}{2(\gamma-1)}}\,
       \big(r_x + {\TS\frac{2}{3\gamma-1}}\,\hat{S}_x\,a\big)
        =\hat{S}^{-\frac{3(3-\gamma)}{2(3\gamma-1)}}\,
       a^{\frac{\gamma+1}{2(\gamma-1)}}\,
       \big(\zeta- {\TS\frac{\gamma-1}{\gamma(3\gamma-1)}}\,\hat{S}_x\,a\big).
       \label{intr mainq}
\end{align}
Then, by some calculations ({\it cf}. \cite{G3}),  $(\alpha, \beta)$ satisfy
\begin{align}
  \partial_+ \alpha = b_0- b_2 \, \alpha^2, \qquad  \partial_- \beta = b_0- b_2 \, \beta^2,
\label{yq odesq}
\end{align}
where
\begin{align}
  {b}_0 &:= {\TS \frac{(\gamma-1)K_c}{\gamma(3\gamma-1)}} \,
        \big(\TS\,\hat{S}\,\hat{S}_{xx}
         - {\TS\frac{3\gamma+1}{3\gamma-1}}\,\hat{S}_x^2\big)\,
	\hat{S}^{-\frac{3(3-\gamma)}{2(3\gamma-1)}}\,
         a^{\frac{3(\gamma+1)}{2(\gamma-1)}+1},\nn\\
  {b}_2 &:= {\TS\frac{(\gamma+1)K_c}{2(\gamma-1)}}\,
	\hat{S}^{\frac{3(3-\gamma)}{2(3\gamma-1)}}\,
        a^{\frac{3-\gamma}{2(\gamma-1)}}.
\label{adefsq}
\end{align}

\begin{remark}
Owing to the complex structure of the nonisentropic Euler equations,
it is not easy to give a clear definition of rarefaction and compression characters
as in the isentropic case.
Notice that the stationary $($Lagrangian$)$ solutions
with varying entropy involve no rarefaction or compression.
Thus, the signs of $s_x$ and $r_x$ {\rm (}resp., $\alpha$ and $\beta${\rm )} cannot be used
to determine the rarefaction and compression characters of the nonisentropic solution in general,
since $(s_x,r_x)$  $($resp., $(\alpha,\beta)${\rm )}  might
be nonzero in the stationary $($Lagrangian$)$ solution with varying entropy.
On the other hand,
since $(u, p)$ are Riemann invariants for the second characteristic family,
it follows  from \eqref{lagrangian1q}--\eqref{lagrangian3q} that
$(u,p)$ are constant functions in the stationary solution with smoothly varying entropy.
Then, by Lemma {\rm \ref{riclemma}}, we see that $(s_t, r_t)$ or $(\xi,\zeta)$ are also zero
in the stationary solution with smoothly varying entropy.
This is one of the reasons why we use $(s_t,r_t)$ or $(\xi,\zeta)$
in the analysis of the corresponding formation of singularities and well-posedness problems.
\end{remark}

\section{Formation of Singularities for the Initial Data with a Far-Field Vacuum}

This section is devoted to the analysis of the formation of singularities of $C^1$ solutions
with large initial data that allow a far-field vacuum for the compressible Euler equations.
We first state our  main results.

\subsection{Main results}

We first focus on the isentropic case \eqref{p1}--\eqref{p2}.
When the initial density $\rho_0$ has a constant positive lower bound:
\beq\label{exist}
\max_{x\in R}v_0(x)< \infty,
\eeq
the result of singularity formation by Lax \cite{lax2} has been extended
to include equations \eqref{p1}--\eqref{p2} with any $\gamma>1$
in \cite{G9,CPZ}.
In fact, the result in \cite{lax2} applies for the case of large initial data with $\gamma\geq 3$.
On the other hand, for the case $1<\gamma<3$, it requires a good enough time-dependent lower bound of the density.
Such a lower bound was first given in \cite{CPZ}, and then extended to its optimal order $O((1+t)^{-1})$ in \cite{G9,CPZ2}.
Intuitively, the result can be stated as follows.

\begin{proposition}\label{prop1}
When \eqref{exist} is satisfied, the $C^1$ solution of \eqref{p1}--\eqref{p2} breaks down
if and only if the initial data contain compression somewhere in the sense of Definition {\rm\ref{def1}}.
\end{proposition}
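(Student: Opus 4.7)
The plan is to work with Lax's decoupled Riccati system for the integrated gradient variables $(\alpha,\beta)$ given in \eqref{albeis_yq}--\eqref{p_y_eq-1}, and to reduce the proposition to a purely ODE statement along characteristics by controlling the coefficient $b=\tilde K\rho^{(3-\gamma)/4}$. Because we are in the isentropic case with $\hat S\equiv 1$, the Riemann invariants $(s,r)$ are genuinely conserved along characteristics by \eqref{srconq}, so $\|s\|_{L^\infty},\|r\|_{L^\infty}\le \|s(0,\cdot)\|_{L^\infty}+\|r(0,\cdot)\|_{L^\infty}$, which immediately bounds $a=(s-r)/2$ from above and hence $\rho$ from above. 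Combined with the hypothesis \eqref{exist} that the initial density is uniformly positive below, this provides the starting point for the needed density bounds.

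For the \emph{only if} (sufficiency of initial compression) direction, I would fix a point $x_0$ with $\min\{\alpha,\beta\}(0,x_0)<0$, say $\alpha(0,x_0)=-A<0$, and follow the forward characteristic $x^+(t)$ issuing from $(0,x_0)$. Along it, \eqref{p_y_eq-1} reads $\partial_+\alpha=-b\,\alpha^2$, so that $\alpha$ stays negative and $\frac{d}{dt}\bigl(\alpha^{-1}\bigr)=b$, giving the explicit formula
\begin{equation*}
\frac{1}{\alpha(t,x^+(t))}=-\frac{1}{A}+\int_0^t b\bigl(\tau,x^+(\tau)\bigr)\,\dd\tau .
\end{equation*}
Hence breakdown in finite time follows as soon as one proves the divergence
\begin{equation*}
\int_0^\infty b\bigl(\tau,x^+(\tau)\bigr)\,\dd\tau=\tilde K\int_0^\infty \rho^{\frac{3-\gamma}{4}}\bigl(\tau,x^+(\tau)\bigr)\,\dd\tau=\infty
\end{equation*}
along the characteristic, which is precisely the condition \eqref{smc}. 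When $\gamma\ge 3$ the exponent $(3-\gamma)/4\le 0$ and the upper bound on $\rho$ already obtained makes $b$ pointwise bounded below by a positive constant, so the divergence is immediate (this is essentially Lax's case). When $1<\gamma<3$ I would import the optimal time-dependent lower bound $\rho(t,x)\ge C(1+t)^{-1}$ along characteristics from \cites{CPZ,G9,CPZ2}; then $\int_0^\infty(1+\tau)^{-(3-\gamma)/4}\,\dd\tau=\infty$ because $(3-\gamma)/4<1/2<1$, which closes the argument.

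For the \emph{if} (necessity) direction, assume $\alpha(0,\cdot)\ge 0$ and $\beta(0,\cdot)\ge 0$, i.e. no compression in the sense of Definition \ref{def1}. Since $b>0$, the Riccati equation $\partial_+\alpha=-b\alpha^2$ together with a standard comparison argument shows that $\alpha$ stays nonnegative along forward characteristics for as long as the solution exists, with $\alpha(t,x^+(t))\le \alpha(0,x_0)$; the symmetric statement holds for $\beta$. This gives a uniform $L^\infty$ bound on $(\alpha,\beta)$ and, together with the already established bounds on $(s,r)$ and $\rho$, a uniform $C^1$ bound on $(v,u)$ on every compact time interval. A standard continuation argument for classical solutions of the $p$-system then extends the local $C^1$ solution globally in time.

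The main obstacle is the density lower bound used in the sufficiency step when $1<\gamma<3$: without a sharp time-dependent lower bound on $\rho$ one cannot guarantee that $\int_0^\infty b\,\dd t$ diverges along characteristics, and indeed examples with $\rho\to 0$ as $t\to\infty$ preclude any time-independent lower bound. I would therefore cite the optimal $O((1+t)^{-1})$ estimate of \cites{G9,CPZ2}, which was established precisely to overcome this obstacle and which relies in turn on the Riemann invariant $L^\infty$ bounds that are automatic in the isentropic case. The remaining work — setting up $(\alpha,\beta)$, deriving \eqref{p_y_eq-1}, and running the Riccati comparison — is by now standard and can be assembled from \cites{lax2,CPZ,G9}.
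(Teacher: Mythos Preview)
Your proposal is correct and follows precisely the approach the paper attributes to \cites{lax2,CPZ,G9}: the paper does not give its own proof of this proposition but records it as a known result, pointing to Lax's Riccati system \eqref{p_y_eq-1} together with the optimal $O((1+t)^{-1})$ density lower bound of \cites{G9,CPZ2} for the range $1<\gamma<3$, and to the classical $C^1$ existence theory (and the explicit formulas \eqref{case1}, \eqref{4.6a}) for the global existence direction. Your sketch reproduces exactly this argument; the only point worth tightening is that in the necessity direction you also need a time-dependent lower bound on $\rho$ (equivalently on $a$) to pass from the $L^\infty$ bound on $(\alpha,\beta)$ back to a bound on $(s_x,r_x)$, but this is supplied by the same mass-equation estimate (Lemma~\ref{density_low_bound_1-3}) that you invoke implicitly.
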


The global existence theorem in this result can be obtained
by the existence theory of $C^1$ solutions ({\it cf}. \cite{Lidaqian}).
This can also be seen from \eqref{case1} and \eqref{4.6a} in the proof of Theorem \ref{p_sing_thm} in \S 3.2.

\vspace{.2cm}
A natural question is whether Proposition \ref{prop1} still holds when \eqref{exist} is not satisfied.
This is not direct. For example, when
\[
0<\rho_0(x)\in (C^1\cap L^1)(\mathbb{R}),
\]
and
\beq\label{vff}
\lim_{x\rightarrow -\infty}\rho_0(x)=0\qquad\hbox{or}\qquad \lim_{x\rightarrow \infty}\rho_0(x)=0,
\eeq
the existing methods in  \cite{G9,CPZ,lax2} do not work for a proof of the equivalent condition
for the formation of singularities when $1<\gamma<3$, since there is lack of a global lower bound estimate
of the density.
A further question for nonisentropic solutions is to see whether the results in \cite{G9,CPZ}
can be extended to the case when the initial density
allows a far-field vacuum.
In this paper, we provide a new method to tackle these questions.
Instead of the initial assumptions used in \cite{G9,CPZ} including \eqref{exist},
we assume that the initial data satisfy the following conditions:

\begin{condition}\label{initialdata2}
The initial data $(\rho_0, u_0, S_0)=(v_0^{-1}, u_0, S_0)$ satisfy
\begin{equation}\label{initial3}
\begin{split}
&\rho_0>0,\quad  (\rho_0,u_0)\in C^1(\mathbb{R}), \quad S_0\in C^2(\mathbb{R}),\\
&\|(\rho_0,u_0)\|_{C^1(\mathbb{R})}+\|S_0\|_{C^2(\mathbb{R})}=K_0\in (0, \infty).
\end{split}
\end{equation}
Furthermore, for  $1<\gamma<\frac{5}{3}$,  we assume
\begin{equation}\label{yq1}
Y=\max\big\{0,\ \sup_x\,\alpha(0,x)\big\}<\infty, \qquad  Q= \max\big\{0,\ \sup_x\,\beta(0,x)\big\}<\infty,
\end{equation}
where $\alpha$ and $\beta$ are defined in  \eqref{intr mainq}.
\end{condition}

\begin{condition}\label{bv}
Assume that the initial entropy $S_0(x)\in C^2$
has finite total variation so that
\beq
  V := \frac{1}{2c_v}\int_{-\infty}^{\infty}|S_0'(x)|\;\dd x
     = \int_{-\infty}^{\infty}\frac{|\hat{S}_0'(x)|}{\hat{S}_0(x)}\;\dd x<\infty.
\label{Vdef}
\eeq
\end{condition}

\begin{remark}
The  above initial conditions allow the appearance of vacuum in the far-field, {\it i.e.}, \eqref{vff}.
Condition {\rm \eqref{initial3}} implies that,
for $\gamma\geq \frac{5}{3}$, \eqref{yq1} still holds.
For the isentropic case, $S_0(x)=\bar S$ is a constant so that Condition {\rm \ref{bv}}
is automatically satisfied.
\end{remark}

Now we state our main results for the isentropic and nonisentropic solutions,
respectively.

\begin{theorem}\label{p_sing_thm}
Consider the isentropic solutions with $S_0(x)=\bar S$.
For $\gamma>1$, if $(\rho_0(x), u_0(x), \bar S)$ satisfy  Condition {\rm \ref{initialdata2}},
then the Cauchy problem  \eqref{p1}--\eqref{p2} with   \eqref{initial1.2} has a unique
global-in-time $C^1$ solution if and only if the initial data satisfy
\beq\label{p_lemma_con}
\min\big\{\inf_{x\in \mathbb{R}}\alpha(0,x), \inf_{x\in \mathbb{R}}\beta(0,x)\big\}\geq 0.
\eeq
\end{theorem}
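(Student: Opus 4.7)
Both implications reduce to an analysis of the decoupled Riccati system \eqref{p_y_eq-1}--\eqref{a2} for $(\alpha,\beta)$ combined with the conservation laws \eqref{srconq} for the Riemann invariants $s=u+a$ and $r=u-a$. Since $b\ge 0$, the sign structure is clean: nonnegative initial data on $\alpha,\beta$ are preserved along the corresponding characteristics and stay bounded, whereas any negative value forces a finite-time blow-up, provided $\int_0^\infty b\,\dd\tau$ diverges along the characteristic in question. Relative to Proposition~\ref{prop1}, the new input is to secure this divergence when the initial density is allowed to vanish at far field, so that no global positive lower bound on $\rho$ is available.

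\paragraph{Sufficiency: $\alpha(0,\cdot),\beta(0,\cdot)\ge 0$ yields a global $C^1$ solution.} Because $b\ge 0$, $\partial_+\alpha=-b\alpha^2\le 0$ preserves $\alpha\ge 0$ along forward characteristics and gives $0\le\alpha(t,x)\le\sup_y\alpha(0,y)$; likewise $\beta$ along backward characteristics. Conservation of $s,r$ then supplies uniform $L^\infty$ bounds for $s,r,u,a$, hence a uniform upper bound for $\rho$. Nonnegativity of $\alpha,\beta$ gives $u_x\ge 0$, so that $v_t=u_x\ge 0$ and $v$ is monotone non-decreasing along every particle path $x=\mathrm{const}$; combined with $u_x=(\alpha+\beta)/\bigl(2a^{(\gamma+1)/(2(\gamma-1))}\bigr)\le C\,v^{(\gamma+1)/4}$, this prevents $v$ from blowing up in finite time and supplies pointwise control of $s_x,r_x,v_x$ via $s_x=a^{-(\gamma+1)/(2(\gamma-1))}\alpha$. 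A continuation argument from the local $C^1$ well-posedness then yields the global solution.

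\paragraph{Necessity: initial compression forces a finite-time singularity.} Pick $x_0$ with $\alpha(0,x_0)=-\alpha_*<0$; the case $\beta(0,x_0)<0$ is symmetric. Along the forward characteristic $X^+(\cdot;x_0)$, $\alpha$ remains negative and $(1/\alpha)_t=b$, giving
\[
\alpha(t)=-\Bigl(\tfrac{1}{\alpha_*}-\int_0^t b\bigl(\tau,X^+(\tau;x_0)\bigr)\,\dd\tau\Bigr)^{-1},
\]
which blows up at the finite time at which the integral reaches $1/\alpha_*$. For $\gamma\ge 3$ this is immediate: $b=\tilde K\rho^{(3-\gamma)/4}$ admits the positive lower bound $\tilde K\rho_{\max}^{(3-\gamma)/4}$, since the Riemann invariants already give $\rho\le\rho_{\max}$.

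\paragraph{Main obstacle: $1<\gamma<3$ with far-field vacuum.} Here $b\to 0$ at vacuum, so a quantitative local lower bound for $\rho$ along the characteristic is needed. The plan is to use the preserved upper bounds $\alpha\le Y$, $\beta\le Q$ from \eqref{yq1} (automatic in the isentropic case from $(\rho_0,u_0)\in C^1$) together with $a^{-(\gamma+1)/(2(\gamma-1))}=\mathrm{const}\cdot v^{(\gamma+1)/4}$ to convert $v_t=u_x$ at fixed $x$ into the scalar differential inequality $v_t\le C\,v^{(\gamma+1)/4}$. Since $(\gamma+1)/4<1$, integration yields
\[
v(t,x)\le\bigl(v_0(x)^{(3-\gamma)/4}+Ct\bigr)^{4/(3-\gamma)},\qquad b(t,x)\ge\frac{\tilde K}{v_0(x)^{(3-\gamma)/4}+Ct}.
\]
To apply this along $X^+(\cdot;x_0)$, I would invoke finite speed of propagation in Lagrangian coordinates: the bound $a\le a_{\max}$ forces $v\ge v_{\min}$ and hence $c\le c_{\max}$, so that the characteristic stays in the compact window $[x_0-c_{\max}T,\,x_0+c_{\max}T]$ on $[0,T]$, on which $v_0\in C^1$ is bounded. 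The hardest step, and the one absent from the uniform non-vacuum case of \cites{CPZ,G9}, is to quantify the tradeoff between the logarithmic gain from the pointwise lower bound on $b$ and the growth of $v_0$ in the moving window, and thereby to show that the integral exceeds $1/\alpha_*$ in finite time.
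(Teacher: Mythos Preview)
Your sufficiency sketch is fine and matches how the paper handles that direction (it simply cites the classical existence theory). The gap is in the necessity direction for $1<\gamma<3$, precisely at the point you flag as ``the hardest step'' without actually resolving it.

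Your plan is to integrate $b$ in $t$ along the forward characteristic, using the pointwise density bound $v(t,x)\le\bigl(v_0(x)^{(3-\gamma)/4}+Ct\bigr)^{4/(3-\gamma)}$ and finite propagation speed to confine the characteristic to the window $[x_0-c_{\max}T,\,x_0+c_{\max}T]$. But this window grows linearly in $T$, and under a far-field vacuum $v_0$ may grow without bound on it; the resulting lower bound $b\ge \tilde K\big/\bigl(v_0(X^+(t))^{(3-\gamma)/4}+Ct\bigr)$ can then fail to have a divergent time integral. No amount of ``tradeoff'' bookkeeping fixes this, because the estimate genuinely can be integrable in $t$ when $v_0$ grows fast enough along the characteristic.

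The paper's way out is a dichotomy you are missing. Let $x=\Gamma(t;x^*)$ be the forward characteristic. Either $\Gamma(t;x^*)\to L<\infty$, in which case $v_0$ is bounded on the compact interval $[x^*,L]$ and your logarithmic argument goes through verbatim; or $\Gamma(t;x^*)\to\infty$, in which case one changes variable from $t$ to $x$ along the characteristic ($\dd t=\dd x/c$) and observes that
\[
\frac{b}{c}=K_2\,\rho^{\frac{1-3\gamma}{4}}\ge K_2\,\rho_{\max}^{\frac{1-3\gamma}{4}}>0,
\]
since $1-3\gamma<0$ and $\rho$ has a \emph{uniform upper bound} from the Riemann invariants (Lemma~\ref{densityupperbound}). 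Hence $\int_{x^*}^{\infty}(b/c)\,\dd x=\infty$ trivially, and the Riccati formula written in $x$ forces $\alpha\to-\infty$ at a finite $x$, i.e.\ at a finite time. The point is that while $b\sim\rho^{(3-\gamma)/4}$ degenerates at vacuum, $b/c\sim\rho^{(1-3\gamma)/4}$ does not; switching to $x$-integration exploits only the density \emph{upper} bound, which is exactly what survives under a far-field vacuum. Once you add this dichotomy, your proof closes.
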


\begin{theorem}\label{Thm singularity2}
Consider the $C^1$ solution of the Cauchy problem \eqref{lagrangian1q}--\eqref{lagrangian3q}
with \eqref{initial1},
where $\gamma>1$ and the initial data $(\rho_0(x), u_0(x), S_0(x))$ satisfy
Conditions {\rm\ref{initialdata2}}--{\rm\ref{bv}}.
Then there exists a positive constant $N$ depending only on $(K, c_v)$ and the initial data
{\rm (}which is precisely given in \eqref{Ndefq} later{\rm )} such that,
if the initial data satisfy the strong compression condition{\rm :}
\beq
\min\big\{\inf_{x\in\mathbb{R}} \alpha(0,x), \inf_{x\in \mathbb{R}} \beta(0,x)\big\} < -N,
\label{yq-N}
\eeq
then  $|\rho_x|$ and/or $|u_x|$ must blow up in a finite time.
\end{theorem}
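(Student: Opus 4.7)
My plan is to prove blow-up by a Lax-type Riccati argument applied to the transformed gradient variable $\alpha$ (the case of $\beta$ is entirely symmetric). I would choose a point $x_0$ with $\alpha(0,x_0)<-N$ and follow the forward characteristic $x^+(t)$ through $x_0$. Setting $y(t):=\alpha(t,x^+(t))$, the decoupled equation \eqref{yq odesq} becomes the scalar ODE $\dot y=b_0-b_2 y^2$. Blow-up will follow once I can show (a) $|b_0|\le C_0$ and (b) $b_2\ge c_2>0$ (or at least a divergent characteristic integral) along $x^+(t)$, uniformly in $t$, with constants depending only on the data; then the choice $N:=\sqrt{2C_0/c_2}$ produces a trapping region $y<-N$ in which $\dot y\le -\tfrac12 c_2 y^2$, sending $y\to-\infty$ in finite time by comparison with the pure Riccati equation.

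The first input is a uniform bound on the Riemann invariants. The coupled system \eqref{s_eqn}, combined with the BV assumption \eqref{Vdef} of Condition \ref{bv}, yields a Gronwall-type estimate modulated by $V$, hence $(s,r)\in L^\infty$, hence $|u|$ and $\hat{S}a$ are uniformly bounded. Because $\hat{S}(x)$ is trapped between two positive constants (from $S_0\in C^2$ and $V<\infty$), this also bounds $a$ from above, and with $\hat{S}_x,\hat{S}_{xx}$ controlled by $\|S_0\|_{C^2}\le K_0$, yields the upper bound $|b_0|\le C_0$ from \eqref{adefsq}.

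The second input is the lower bound on $b_2=K_c\tfrac{\gamma+1}{2(\gamma-1)}\hat{S}^{3(3-\gamma)/(2(3\gamma-1))}a^{(3-\gamma)/(2(\gamma-1))}$. For $\gamma\ge 3$ the exponent of $a$ is non-positive, so the upper bound on $a$ already yields $b_2\ge c_2>0$. For $1<\gamma<3$ the exponent is positive, so one needs a lower bound on $a$ along $x^+(t)$, equivalently an upper bound on the specific volume $v$; this is the hard part, since \eqref{yq1} permits $\rho_0\to 0$ in the far field. I would exploit the compressive sign of $\alpha$ itself: while $y<-N$, combining the transport $v_t=u_x$ with the definition \eqref{intr mainq} of $\alpha$ yields an upper bound of the form $v(t,x^+(t))\le C(1+t)^\mu$ with $\mu$ small enough that the characteristic integral $\int_0^\infty b_2(t,x^+(t))\,\dd t=\infty$, which is the integrated version of (b) that the Riccati argument really needs.

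Finally, the blow-up of $\alpha$ is converted into blow-up of $|\rho_x|$ or $|u_x|$: from \eqref{intr mainq} the prefactor $\hat{S}^{-3(3-\gamma)/(2(3\gamma-1))}a^{(\gamma+1)/(2(\gamma-1))}$ and the correction $\tfrac{\gamma-1}{\gamma(3\gamma-1)}\hat{S}_x a$ are uniformly bounded, so $\alpha\to-\infty$ forces $u_x+\hat{S}a_x\to-\infty$; since $a=\frac{2\sqrt{K\gamma}}{\gamma-1}v^{-(\gamma-1)/2}$ is a smooth decreasing function of $v$ and $\rho=v^{-1}$, divergence of $a_x$ is equivalent to divergence of $\rho_x$. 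The main obstacle, and the genuinely new step compared with \cites{CPZ,G9}, is the characteristic-wise density lower bound for $1<\gamma<3$ described in the previous paragraph; it is in this step that the constant $N$ of the theorem is pinned down quantitatively in terms of $(K,c_v)$ and the data.
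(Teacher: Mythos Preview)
Your overall Riccati strategy is right, and your treatment of the case $\gamma\ge 3$ matches the paper's. But for $1<\gamma<3$ there is a genuine gap in the step you flag as ``the hard part,'' and the paper resolves it by a different mechanism than the one you sketch.

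First, a structural issue with your choice of $N$. You set $N=\sqrt{2C_0/c_2}$, which presupposes a uniform lower bound $b_2\ge c_2>0$; but for $1<\gamma<3$ this is exactly the density lower bound you are trying to establish, so the definition is circular. The paper instead bounds the \emph{ratio} $b_0/b_2\le N^2$ directly (equation \eqref{a0overa2q}): both $b_0$ and $b_2$ carry a positive power of $a$, and the quotient involves only $a^{(3\gamma-1)/(\gamma-1)}$ times bounded entropy factors, so the upper bound on $a$ from Lemma~\ref{Thm_upper} suffices. With this $N$, the inequality $\partial_+\alpha\le b_2(N^2-\alpha^2)$ already gives the trapping $\alpha<-N$ without any lower bound on $b_2$.

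Second, and more seriously, your proposed bound $v(t,x^+(t))\le C(1+t)^\mu$ along the characteristic does not follow from ``the compressive sign of $\alpha$'' plus $v_t=u_x$. The transport $v_t=u_x=\tfrac12(s_x+r_x)$ is an evolution at \emph{fixed} $x$ and involves both $\alpha$ and $\beta$; it yields (after the paper's Lemma~\ref{full_lemma1}, which gives \emph{upper} bounds $\bar Y,\bar Q$ on $\alpha,\beta$ --- not the negativity of $\alpha$) only
\[
v(t,x)\le\big(v_0(x)^{\frac{3-\gamma}{4}}+K_{**}(\bar Y+\bar Q)t\big)^{\frac{4}{3-\gamma}}.
\]
Along $x^+(t)$ this contains $v_0(x^+(t))$, which can diverge if the characteristic runs to $+\infty$ and $\rho_0\to 0$ there. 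So no bound of the form $C(1+t)^\mu$ is available in that regime, and your divergence claim $\int_0^\infty b_2\,\dd t=\infty$ fails.

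The paper's new idea is precisely to split on whether $\lim_{t\to\infty}x^+(t)$ is finite or $+\infty$. If the limit is finite, then $v_0(x^+(t))$ stays bounded on the compact interval $[x^*,L]$ and the fixed-$x$ estimate above does give $b_2\gtrsim (1+t)^{-1}$, whence $\int_0^\infty b_2\,\dd t=\infty$. If the characteristic reaches $+\infty$, the paper reparametrizes by $x$ and integrates
\[
\frac{\partial\alpha}{\partial x}\le -\frac{\eps(2+\eps)}{(1+\eps)^2}\,\frac{b_2}{c}\,\alpha^2,
\qquad \frac{b_2}{c}=K_5\,\rho^{\frac{1-3\gamma}{4}}\ge K_5\,M_\rho^{\frac{1-3\gamma}{4}}>0,
\]
where the last inequality uses only the \emph{upper} bound on $\rho$ (the exponent $\tfrac{1-3\gamma}{4}$ is negative). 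Thus $\int_{x^*}^\infty(b_2/c)\,\dd x=\infty$ and blow-up follows. This $x$-integration, replacing the unavailable density lower bound by the always-available density upper bound, is the missing ingredient in your argument.
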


It should be pointed out that $\eqref{yq-N}$ is a sufficient condition for the formation of singularities,
which has not included all of the initial data with the weak compression, as shown by the decompression
phenomenon justified through the decompression example in \S 4.
Now we first give two examples to show that either the finite-time breakdown or global well-posedness
can happen for $C^1$ solutions of the Cauchy problem \eqref{lagrangian1q}--\eqref{lagrangian3q} with \eqref{initial1}
when $\min\{\inf_x \alpha(0,x),\inf_x \beta(0,x)\}\geq 0$.

\begin{example}\label{Example-3.1}
Consider the $C^1$ solution $(\rho, u,S)$ of the Cauchy problem \eqref{lagrangian1q}--\eqref{lagrangian3q} and \eqref{initial1}
with the following initial data{\rm :}
\begin{equation}\label{klnm}
S_0(x)=\arctan (x), \quad a_0(x)=\hat{S}^{-\frac{3(\gamma-1)}{3\gamma-1}}_0,\quad u_0=\epsilon \arctan (x),
\end{equation}
where $\epsilon>0$ is a sufficiently small parameter.
It follows from a direct calculation that
\begin{equation}\label{klnm-2}
\begin{split}
\alpha(0,x)=&\,\beta(0,x)=\frac{\epsilon}{1+x^2} e^{-\frac{3\arctan (x)}{(3\gamma-1)c_v}}>0,\\[4pt]
\xi(0,x)=&\,\Big(\epsilon^{}-\frac{\gamma-1}{2\gamma(3\gamma-1)c_v}\hat{S}^{\frac{2}{3\gamma-1}}_0\Big)\frac{1}{1+x^2}<0,\\[4pt]
\zeta(0,x)=&\,\Big(\epsilon^{}+\frac{\gamma-1}{2\gamma(3\gamma-1)c_v}\hat{S}^{\frac{2}{3\gamma-1}}_0\Big)\frac{1}{1+x^2}>0
\end{split}
\end{equation}
for sufficiently small $\epsilon>0$ and $x\in \mathbb{R}$,
which implies that $(\rho,u,S)$ does not satisfy our condition $(\ref{yq-N})$.
This example was first considered in Pan-Zhu {\rm \cite{huahua}},
in which  the finite-time breakdown was proved when $1<\gamma<3$ with sufficiently small  $\epsilon>0$.
\end{example}

\begin{example}\label{Example-3.2}
For the initial-boundary value problem  $($IBVP$)$ of \eqref{lagrangian1q}--\eqref{lagrangian3q}
in $[0,\infty)\times [0,\infty)$, Lin-Liu-Yang {\rm \cite{liu}} proved that,
if the $C^1$ smooth initial-boundary data $($away from the vacuum$)$ satisfy
\begin{equation}\label{klnm-3}
\begin{split}
&\alpha(0,x)\geq 0,\quad \beta(0,x)\geq 0 \qquad\,\,\, \text{for $x\geq  0$},\\
&\alpha(t,0)\geq 0\qquad\,\,\, \text{for $t\geq 0$}, \\
&  \|S_x\|_{L^1}\leq  C_0,   \quad S\in C^2([0, \infty)), \\
& S_{xx}- (3\gamma-1)c_v S^2_x >0 \qquad\,\,\, \text{for $x\geq  0$}\\
\end{split}
\end{equation}
for some constant $C_0>0$ depending on the lower bound of $\rho_0$,
then there is a unique global-in-time  $C^1$ solution of the corresponding IBVP.
\end{example}

In \S 3.2--\S 3.3 below, we give the proofs of Theorems \ref{p_sing_thm}--\ref{Thm singularity2}, respectively.

\subsection{Proof of Theorem \ref{p_sing_thm}: The $p$--system}
We first review the estimates for the upper and lower bounds of the density and velocity.

First, since the Riemann invariants are constant along the characteristics, as indicated in (\ref{srconq}), for this case,
it is direct to obtain the upper bounds of $\rho$ and $|u|$.

\begin{lemma}\label{densityupperbound}
For the smooth solution, there exists some constant $K_{1}>0$ such that
$$
\|(\rho,u,c,a)\|_{L^\infty([0,T]\times \mathbb{R})}\leq K_{1}<\infty.
$$
\end{lemma}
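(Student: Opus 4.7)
The plan is to exploit the fact that, for the isentropic system \eqref{p1}--\eqref{p2}, the Riemann invariants $s=u+a$ and $r=u-a$ are literally conserved along the two families of characteristics. Indeed, specializing \eqref{s_eqn} (or invoking \eqref{srconq} directly) to $\hat{S}\equiv 1$ gives $\partial_+ s=\partial_- r =0$, so $s$ is constant along every forward characteristic $dx^+/dt=c$ and $r$ is constant along every backward characteristic $dx^-/dt=-c$. Consequently, any $L^\infty$ bound on $(s_0,r_0)$ propagates trivially to a time-independent $L^\infty$ bound on $(s,r)$, and hence on $u=\tfrac12(s+r)$ and $a=\tfrac12(s-r)$.

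The first step is therefore to bound $(s_0,r_0)$ using Condition \ref{initialdata2}. The hypothesis $\|(\rho_0,u_0)\|_{C^1(\mathbb{R})}\le K_0$ gives at once $\|u_0\|_{L^\infty}\le K_0$ and $\rho_0\le K_0$, equivalently $v_0\ge K_0^{-1}$. Because $a$ is a strictly decreasing function of $v$ via \eqref{z def}, this yields
\[
0<a_0(x)\le \tfrac{2\sqrt{K\gamma}}{\gamma-1}\,K_0^{(\gamma-1)/2},
\]
uniformly in $x$ (note that far-field vacuum only pushes $a_0$ toward $0$, which is fine for an \emph{upper} bound). Combining these gives $\|s_0\|_{L^\infty}+\|r_0\|_{L^\infty}\le C(K_0)$.

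The second step is the characteristic propagation. Since $(v,u)\in C^1([0,T]\times\mathbb{R})$ with $v>0$, the sound speed $c=\sqrt{K\gamma}\,v^{-(\gamma+1)/2}$ is $C^1$ and the characteristic ODEs in \eqref{pmc_full} define global integral curves on $[0,T]$. In particular, through any point $(t,x)\in[0,T]\times\mathbb{R}$ one can trace backward to $t=0$ along the forward and backward characteristics, reaching initial points $y^\pm(t,x)$; conservation then gives
\[
s(t,x)=s_0(y^+(t,x)),\qquad r(t,x)=r_0(y^-(t,x)),
\]
so $\|(s,r)\|_{L^\infty([0,T]\times\mathbb{R})}\le C(K_0)$, uniformly in $T$.

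The final step is purely algebraic. From $u=\tfrac12(s+r)$ and $a=\tfrac12(s-r)$ (positive by the definition of a $C^1$ solution) we obtain uniform upper bounds for $|u|$ and $a$. Invoking \eqref{tau p c}, the bound on $a$ yields an upper bound on $v^{-1}=\rho$ (equivalently, a strictly positive lower bound on $v$ depending only on the data) and also on $c=K_c\,a^{(\gamma+1)/(\gamma-1)}$. This produces a single constant $K_1=K_1(K_0,K,\gamma)$, independent of $T$, for which the claimed estimate holds. The only mildly delicate point is the backward traceability of characteristics, but this is automatic for a $C^1$ solution with strictly positive $v$ on $[0,T]\times\mathbb{R}$, so no genuine obstacle arises.
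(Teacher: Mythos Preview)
Your proof is correct and follows exactly the approach indicated in the paper, which simply remarks that the Riemann invariants are constant along characteristics (as in \eqref{srconq}) so the upper bounds on $\rho$ and $|u|$ are direct. You have merely filled in the elementary details the paper omits.
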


\smallskip
To obtain the lower bound of the density, we first bound $(\alpha,\beta)$.
In fact,  by the equations in (\ref{p_y_eq-1}),
$(\alpha, \beta)$ decay with respect to $t$ along the characteristics,
so it is also direct to verify the following lemma:

\begin{lemma}\label{lemma_p_2}
If the initial data $(\rho_0(x), u_0(x), \bar S)$ satisfy Condition {\rm\ref{initialdata2}},
then the smooth solution $(\rho, u)(t,x)$  of the Cauchy problem \eqref{p1}--\eqref{p2} with \eqref{initial1.2}
satisfies
\[
\alpha(t,x)\leq Y,  \quad  \beta(t,x)\leq Q\qquad\,\,\, \text{for $(t,x)\in [0,T]\times \mathbb{R}$},
\]
where constants $Y$ and $Q$ are defined in Condition {\rm\ref{initialdata2}}.
\end{lemma}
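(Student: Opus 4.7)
The plan is to exploit the dissipative structure of Lax's Riccati equation \eqref{p_y_eq-1}. On any $C^1$ solution, $a>0$ pointwise and $\gamma>1$, so the coefficient $b$ in \eqref{a2} is strictly positive; thus the right-hand side $-b\alpha^2$ of the $\alpha$-equation is non-positive. This means that along every forward characteristic $x^+(\tau)$, the function $\tau\mapsto \alpha(\tau, x^+(\tau))$ is non-increasing, and analogously $\beta$ is non-increasing along every backward characteristic.

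To convert this monotonicity into a pointwise upper bound, I would fix an arbitrary $(t,x)\in[0,T]\times\mathbb{R}$ and trace the forward characteristic through $(t,x)$ backward in time to the initial line, arriving at some base point $(0,x_0)$. Since $c$ is bounded on the solution domain by Lemma \ref{densityupperbound}, the ODE $dx^+/d\tau = c(\tau, x^+(\tau))$ is globally solvable on $[0,T]$, so tracing back to $\tau=0$ is always possible. The monotonicity then yields $\alpha(t,x)\leq \alpha(0,x_0)\leq \sup_y \alpha(0,y)\leq Y$, where the final inequality uses the definition $Y=\max\{0,\sup_y \alpha(0,y)\}$ (which correctly absorbs the case where $\alpha(0,\cdot)$ is everywhere negative). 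The bound $\beta(t,x)\leq Q$ follows from the mirror argument, using $\partial_-\beta=-b\beta^2$ along the backward characteristic through $(t,x)$.

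The finiteness of $Y$ and $Q$ that is needed to make the bound meaningful is guaranteed by Condition \ref{initialdata2} explicitly for $1<\gamma<5/3$ and automatically for $\gamma\geq 5/3$ from the $C^1$ regularity of $(\rho_0,u_0)$, as noted in the remark following that condition. There is no substantive obstacle here: the entire argument is a textbook maximum-principle-on-characteristics reading of the Riccati equation, and the only non-algebraic ingredient is the global existence of characteristic curves on $[0,T]$, which is immediate from the uniform bound on $c$.
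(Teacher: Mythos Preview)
Your argument is correct and is precisely the approach the paper intends: the paper does not give a detailed proof but simply remarks that, by the Riccati equations \eqref{p_y_eq-1}, $\alpha$ and $\beta$ decay along their respective characteristics, so the lemma is ``direct to verify.'' Your write-up merely spells out that one-line observation, including the trace-back to $t=0$ and the role of the boundedness of $c$ from Lemma~\ref{densityupperbound}.
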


With Lemma \ref{lemma_p_2}, we are able to prove the key estimate
of the lower bound of the density (equivalently, the upper bound of $v$)
when $1<\gamma<3$.

\begin{lemma}\label{density_low_bound_1-3}
Let $(\rho, u)(t,x)$ be a $C^1$ solution of  the Cauchy problem \eqref{p1}--\eqref{p2} with \eqref{initial1.2}
defined on the time interval $[0, T)$ for some $T>0$,
with the large initial data satisfying Condition {\rm\ref{initialdata2}}.
If $1<\gamma<3$,  then, for any $(t,x)\in  [0, T)\times \mathbb{R}$,
there is a positive constant $K_*$ depending only on $\gamma$ such that
\[
v(t,x)
\le \Big({v_0}^{\frac{3-\gamma}{4}}(x)+K_*(Y+Q) t\Big)^{\frac{4}{3-\gamma}}
\qquad\,\, \text{for $(t,x)\in [0,T)\times \mathbb{R}$}.
\]
\end{lemma}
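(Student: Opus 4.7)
The plan is to turn the mass-conservation equation $v_t = u_x$ (from \eqref{p1}) into a scalar ODE in $t$ at each fixed Lagrangian $x$, using the Lax variables $(\alpha,\beta)$ to express $u_x$ cleanly and then invoking Lemma \ref{lemma_p_2} for the one-sided bound.

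First I would add the two definitions in \eqref{albeis_yq} to get
\[
\alpha+\beta = a^{\frac{\gamma+1}{2(\gamma-1)}}(s_x+r_x) = 2\,a^{\frac{\gamma+1}{2(\gamma-1)}}\,u_x,
\]
since $s_x+r_x=2u_x$ by \eqref{p_sr}. Combining this with $v_t=u_x$ gives the pointwise identity
\[
v_t(t,x) = \tfrac12\,a^{-\frac{\gamma+1}{2(\gamma-1)}}(\alpha+\beta).
\]
From \eqref{z def} we have $a = \frac{2\sqrt{K\gamma}}{\gamma-1}\,v^{-\frac{\gamma-1}{2}}$, so $a^{-\frac{\gamma+1}{2(\gamma-1)}}$ is a positive constant multiple of $v^{\frac{\gamma+1}{4}}$. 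Hence there is a constant $\tilde{K}_*>0$ (depending only on $\gamma$ and $K$) such that
\[
v_t(t,x) = \tilde{K}_*\,v^{\frac{\gamma+1}{4}}(t,x)\,\big(\alpha(t,x)+\beta(t,x)\big).
\]

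Next, I would apply Lemma \ref{lemma_p_2}, which says $\alpha\le Y$ and $\beta\le Q$; therefore $\alpha+\beta\le Y+Q$ (note both $Y,Q\ge 0$ by definition in \eqref{yq1}). This yields the differential inequality
\[
v_t(t,x) \le \tilde{K}_*(Y+Q)\,v^{\frac{\gamma+1}{4}}(t,x),
\]
valid at every $(t,x)\in[0,T)\times\mathbb{R}$. Because the right-hand side involves only $v$ at the same Lagrangian label $x$, this is an ordinary scalar ODI in $t$ for each fixed $x$.

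Finally, since $1<\gamma<3$ gives $(3-\gamma)/4>0$, I would rewrite the last inequality as
\[
\frac{\dd}{\dd t}\,v^{\frac{3-\gamma}{4}}(t,x) = \tfrac{3-\gamma}{4}\,v^{-\frac{\gamma+1}{4}} v_t \le \tfrac{3-\gamma}{4}\,\tilde{K}_*(Y+Q),
\]
and integrate in $t$ from $0$ to $t$ (with $x$ fixed) to obtain
\[
v^{\frac{3-\gamma}{4}}(t,x) \le v_0^{\frac{3-\gamma}{4}}(x) + K_*(Y+Q)\,t,
\]
with $K_*:=\tfrac{3-\gamma}{4}\,\tilde{K}_*$ depending only on $\gamma$. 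Raising to the power $4/(3-\gamma)$ gives the claim. The only subtlety is checking the signs carefully: the integrating-factor trick works precisely because $(3-\gamma)/4>0$ for $1<\gamma<3$, and the one-sided bound $\alpha+\beta\le Y+Q$ is enough even though $\alpha$ or $\beta$ may be very negative (which corresponds to strong compression and would only make $v_t$ more negative, i.e., help the upper bound on $v$). No global lower bound on $\rho_0$ is used, so the argument accommodates the far-field vacuum permitted by Condition \ref{initialdata2}.
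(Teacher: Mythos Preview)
Your proof is correct and follows essentially the same approach as the paper: both express $v_t=u_x$ in terms of $\alpha+\beta$ via the integrating factor $a^{\frac{\gamma+1}{2(\gamma-1)}}$ (equivalently $\sqrt{c/K_c}$), apply the one-sided bounds from Lemma~\ref{lemma_p_2}, and integrate the resulting separable ODI $v^{-\frac{\gamma+1}{4}}v_t\le\text{const}\cdot(Y+Q)$ in $t$. The only cosmetic difference is that the paper writes things in terms of $c$ and you in terms of $a$, but the constants agree.
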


\begin{proof} From the definition of $(\alpha,\beta)$, we have
$$
\alpha=\sqrt{\frac{c}{K_c}}s_x, \quad \beta= \sqrt{\frac{c}{K_c}}r_x,
$$
which implies
$$
\alpha+\beta=\sqrt{\frac{c}{K_c}} (r_x+s_x)=2\sqrt{\frac{c}{K_c}}u_x.
$$
Therefore, it follows from the mass equation that
$$
\sqrt{c}\,v_t=\frac12 \sqrt{K_c}(\alpha+\beta).
$$
Using the sound speed formula \eqref{localsound} and Lemma \ref{lemma_p_2},
we have
$$
v^{-\frac{\gamma+1}{4}}v_t\le \frac12(K\gamma)^{-\frac14}\sqrt{K_c}(Y+Q).
$$

Notice that $\frac{\gamma+1}{4}<1$ when $1<\gamma<3$.
Then,  for any $x\in \mathbb{R}$ and $t\in [0, T)$,
a simple time integration leads to
$$
v(t,x)\le \Big({v_0}^{\frac{3-\gamma}{4}}(x)+K_*(Y+Q) t\Big)^{\frac{4}{3-\gamma}},
$$
where $K_*=\frac{3-\gamma}{8}(K\gamma)^{-\frac14}\sqrt{K_c}$.
This completes the proof.
\end{proof}

\begin{remark}
If the condition{\rm :}
\beq\label{apenc}
\max\big\{\sup_{x\in \mathbb{R}} s_x(0, x),\, \sup_{x\in \mathbb{R}} r_x(0, x)\big\}<M
\eeq
is further satisfied, then a better estimate in the order of $O((1+t)^{-1})$ can be obtained,
which can be found  in \S {\rm 6.1}.
\end{remark}

\medskip
With Lemmas \ref{densityupperbound}--\ref{density_low_bound_1-3},
we now prove Theorem \ref{p_sing_thm}.

\medskip
\noindent
\emph{Proof of Theorem {\rm \ref{p_sing_thm}}}.
We first note that an equivalent statement of this theorem is that a $C^1$ solution breaks down
in a finite time if and only if
\[
\min\big\{\inf_{x\in \RR}s_x(0,x), \inf_{x\in \RR}r_x(0,x)\big\}<0,
\]
that is, there exists a forward or backward initial compression
in the sense of Definition \ref{def1}.

The global existence of the $C^1$ solution under the condition:
\[
\min\big\{\inf_{x\in \RR}s_x(0,x), \inf_{x\in \RR}r_x(0,x)\big\}\geq 0
\]
is classical, since there exists no vacuum in the initial data
on any finite point $(0,x)$.
We refer the reader to \cite{CPZ} for the details of the proof.

Thus, it suffices to prove the finite-time singularity formation when
\[
\min\big\{\inf_{x\in \RR}s_x(0,x), \inf_{x\in \RR}r_x(0,x)\big\}<0.
\]
Without loss of generality, assume that there exists some $x^*$ such that
\begin{equation}\label{3.13a}
\alpha(0,x^*)<0.
\end{equation}
The proof for the case that $\beta(0,x^{**})<0$ for some
$x^{**}$ is similar, so we omit the details for that case.

Denote
\[
x=\Gamma(t; x^*), \qquad\hbox{or equivalently}\quad t=\Omega(x; x^*),
\]
as the forward characteristic starting from $(0,x^*)$.
From Lemma \ref{density_low_bound_1-3}, we see that, for any fixed $x^*$,
$$
x=\Gamma(t; x^*)=x^*+\int_0^t c(s,\Gamma(s; x^*))\,\text{d}s
$$
is strictly increasing in $t$, and $\Omega(x; x^*)$ is strictly increasing in $x$.
Then
$\lim_{t\rightarrow\infty}\Gamma(t; x^*)$
must exist or equal to infinity,
owing to the monotonicity of $\Gamma(t; x^*)$
for any fixed $x^*$,
which corresponds to the following two cases:

\medskip
\paragraph{\bf Case 1}
$\lim_{t\rightarrow\infty}\Gamma(t; x^*)=\infty$.
A key new idea is to integrate along the $x$--direction, instead of the $t$--direction as done in
\cite{  G9, CPZ, lax2} and the references therein.

From (\ref{p_y_eq-1}) and the definition of $\Gamma(t; x^*)$, we have
\[
\frac{\partial \alpha(\Omega(x; x^*), x)}{\partial x}=- \frac{b}{c} \, \alpha^2
=-K_2\rho^{\frac{1-3\gamma}{4}}\alpha^2,
\]
where
$$
K_2 := {\TS\frac{(\gamma+1)K_C}{2(\gamma-1)\sqrt{\gamma K}}}\,
        \Big(\frac{2\sqrt{K\gamma}}{\gamma-1}\Big)^{\frac{3-\gamma}{2(\gamma-1)}}\, .
$$
Then
\begin{equation}\label{case1}
\alpha(\Omega(x; x^*), x)
=\frac{\alpha(0,x^*)}{1+K_2\alpha(0,x^*)\int_{x^*}^x \rho^{\frac{1-3\gamma}{4}}(\Omega(z; x^*), z)\,\text{d}z}.
\end{equation}

Using Lemma \ref{densityupperbound} and $1-3\gamma<0$ when $\gamma>1$, we have
\begin{equation}\label{inversebound}
\rho^{\frac{1-3\gamma}{4}}\geq K^{\frac{3\gamma-1}{4}}_1 \qquad\,\, \text{for $(t,x)\in [0,T]\times \mathbb{R}$}.
\end{equation}
Thus, there must be some point $x_0>x_*$ such that
$$
1+K_2\alpha(0,x^*)\int_{x^*}^{x_0} \rho^{\frac{1-3\gamma}{4}}(\Omega(z; x^*), z)\,\text{d}z=0
$$
under assumption \eqref{3.13a},
which implies that $\alpha$ or $s_x$ blows up in a finite time.

\smallskip
\medskip
\paragraph{\bf Case 2}
{\it $\lim_{t\rightarrow\infty}\Gamma(t; x^*)=\hbox{L}$
for some finite constant $L>x^*$}.
For this case, we employ the estimate in Lemma \ref{density_low_bound_1-3}.

From (\ref{p_y_eq-1}) and the definition of $\Gamma(t; x^*)$, we have
$$
\frac{1}{\alpha(t,\Gamma(t; x^*))}=\frac{1}{\alpha(0,x^*)}+\int_0^t \, b(\sigma,\Gamma(\sigma; x^*))\;\dd\sigma,
$$
which implies
\begin{equation}\label{4.6a}
\alpha(t,\Gamma(t; x^*))=\frac{\alpha(0,x^*)}{1+y(0,x^*)\int_0^t \, b(\sigma,\Gamma(\sigma; x^*))\;\dd\sigma}.
\end{equation}
It follows  from Lemma \ref{density_low_bound_1-3} and
the definition of $a$ in \eqref{a2} that, when $1<\gamma<3$,
\[
b(t,x)\geq \frac{\ga +1}{4} K_{v}^{-\frac{\ga +1}{4}}\Big(v_0^{\frac{3-\gamma}{4}}(x)+K_*(Y+Q)t\Big)^{-1}
\qquad\,\, \mbox{for $x=\Gamma(t; x^*)\in[x^*,L)$}.
\]
Since $v_0(x)$ with $x=\Gamma(t; x^*)\in[x^*,L)$ is bounded above by some constant depending on $x^*$ and $L$,
but independent of $t$, we have
\[
\int_0^\infty \, b(t,\Gamma(t; x^*))\;\dd t=\infty  \qquad\,\,\mbox{for any $1<\gamma<3$}.
\]
This also holds when $\gamma\geq 3$, since $a$ has a constant lower bound in this case.

Thus, for any $\gamma>1$, there must be some point $t_0>0$ such that
$$
1+\alpha(0,x^*)\int_{0}^{t_0} b(\sigma,\Gamma(\sigma; x^*))\,\text{d}\sigma=0
$$
under assumption \eqref{3.13a},
which shows that $y$ or $s_x$ blows up in a finite time.
This completes the proof.

\subsection{Proof of Theorem \ref{Thm singularity2}: Nonisentropic solutions}

Under Conditions \ref{initialdata2}--\ref{bv} on the initial data,
there are positive constants $M_L$, $M_U$, $M_s$, and $M_r$ such that
\beq
0 < M_L \le \hat{S}(x)=e^{\frac{S_0(x)}{2c_v}} \le M_U, \quad |s_0(x)|\le M_s,\ \quad
|r_0(x)|\le M_r.
\label{m_bounds}
\eeq
Furthermore, denote
\begin{align*}
{\ol V}&:=\frac{V}{2\gamma}=\frac{1}{4\gamma c_v}\int_{-\infty}^\infty |S_0'(x)|\,\dd x,\\
  N_1 &:= M_s+\ol V\,M_r+\ol V\,\big(\ol V\,M_s+{\ol V}^2\,M_r\big)
	\,e^{{\ol V}^2},\\
  N_2 &:= M_r+\ol V\,M_s+\ol V\,\big(\ol V\,M_r+{\ol V}^2\,M_s\big)
	\,e^{{\ol V}^2}.
\end{align*}

In order to handle the solutions of large oscillation,
the first key point is to achieve the $L^\infty$ estimate on the solutions,
which has been fixed by Chen-Young-Zhang \cite{G8}
under Conditions \ref{initialdata2}--\ref{bv} on the initial data,
via a characteristic method.

\begin{lemma}[\cite{G8}]\label{Thm_upper}
Assume the initial data $(\rho_0, u_0, S_0)(x)$ satisfy Conditions {\rm\ref{initialdata2}}--{\rm\ref{bv}}.
If $(\rho(t,x), u(t,x), S(x))$ is a $C^1$ solution of the Cauchy problem \eqref{lagrangian1q}--\eqref{lagrangian3q} and \eqref{initial1}
for $t\in[0,T)$ for some positive $T$, then
\begin{align*}
&|s(t,x)|\le N_1{M_U}^{\frac{1}{2\gamma}}, \qquad |r(t,x)|\le N_2{M_U}^{\frac{1}{2\gamma}},\\
&|u(t,x)|\leq\frac{N_1+N_2}{2}{M_U}^{\frac{1}{2\gamma}}, \qquad  a(t,x)\leq\frac{N_1+N_2}{2}{M_L}^{\frac{1}{2\gamma}-1}:=E_U,
\end{align*}
which implies that there is a positive constant $M_{\rho}$ such that
$$
\rho\le M_{\rho}\qquad \mbox{or equivalently}\quad v\ge \frac{1}{M_{\rho}}.
$$
\end{lemma}

Another key point is how the lower bound estimate of the density can be achieved.
First, similar to Lemma \ref{lemma_p_2}, we are able to find uniform upper bounds for
$(\alpha,\beta)$.
The difference here is that we need to study the structure of
the Riccati system \eqref{yq odesq} in order to obtain such bounds,
which motivates us to study ratio $\frac{b_0}{b_2}$ with $b_0$ and $b_2$ defined in \eqref{adefsq}.
Using this information, we can obtain some lower bound estimate of the density,
as a key lemma to prove the formation of singularities.

We first carry on the calculation on $\frac{b_0}{b_2}$:
\begin{align}\label{a0overa2q}
  {\frac{b_0}{b_2}} =
  {{\TS\frac{2(\gamma-1)^2}{\gamma(\gamma+1)(3\gamma-1)}}\,
        \big(\hat{S}\,\hat{S}_{xx}-{\TS\frac{3\gamma+1}{3\gamma-1}}\,\hat{S}_x^2\big)}\,
    a^{\frac{3\gamma-1}{\gamma-1}}\,\hat{S}^{-\frac{3(3-\gamma)}{3\gamma-1}}.
\end{align}
Note that
\beq \label{bmq}
\hat{S}^2\Big(S_{xx}-\frac{1}{(3\gamma-1)c_v} S_x^2\Big)
={2c_v}\big(\hat{S}\,\hat{S}_{xx}-{\TS\frac{3\gamma+1}{3\gamma-1}}\,\hat{S}_x^2\big),
\eeq
so that $S_{xx}-\frac{1}{c_v(3\gamma-1)} S_x^2$
has the same sign as $b_0$, since $b_2>0$.
Also, we note from the definition of $\hat{S}$ that there is a positive constant $M_3$
such that
\beq\label{m_xxq}
\big|\hat{S}\,\hat{S}_{xx}-{\TS\frac{3\gamma+1}{3\gamma-1}}\,\hat{S}_x^2\big|\le M_3.
\eeq
Define
\beq
  N :=
  \begin{cases}
  \sqrt{\frac{2(\gamma-1)^2}{\gamma(\gamma+1)(3\gamma-1)}\,M_3}
        \  E_U^{\frac{3\gamma-1}{2(\gamma-1)}}
        M_L^{-\frac{3(3-\gamma)}{2(3\gamma-1)}}&\qquad\mbox{for $1<\gamma<3$},\\[2mm]
  \sqrt{\frac{2(\gamma-1)^2}{\gamma(\gamma+1)(3\gamma-1)}\,M_3}
        \,E_U^{\frac{3\gamma-1}{2(\gamma-1)}}
        M_U^{-\frac{3(3-\gamma)}{2(3\gamma-1)}}&\qquad \mbox{for $\gamma\ge 3$}.
  \end{cases}
\label{Ndefq}
\eeq
Then we have
\beq\label{a0a2n2}
\frac{b_0}{b_2}\le  N^2.
\eeq
Since $b_2>0$, by \eqref{yq odesq}, we obtain
\beq\label{a0a2n3}
\partial_+ {\alpha}\leq b_2(N^2-\alpha^2), \qquad \partial_- \beta
\leq b_2(N^2-\beta^2).
\eeq
Following from the comparison theorem for differential equations, we have

\begin{lemma}\label{full_lemma1}
If $(\rho_0, u_0, S_0)(x)$ satisfy Conditions {\rm\ref{initialdata2}}--{\rm\ref{bv}},
then, for any $C^1$ solution $(\rho(t,x), u(t,x), S(x))$ of the Cauchy problem
\eqref{lagrangian1q}--\eqref{lagrangian3q} and \eqref{initial1},
\begin{align*}
\alpha(t,x)\leq\max\big\{N,\ \sup_x \alpha(0,x)\big\}=:\bar Y,
\qquad\beta(t,x)\leq\max\big\{N,\ \sup_x \beta(0,x) \big\}=:\bar Q,
\end{align*}
where $\bar Y$ and $\bar Q$ are two positive constants.
\end{lemma}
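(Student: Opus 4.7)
The strategy is to apply a scalar ODE comparison argument to the differential inequality \eqref{a0a2n3} along each characteristic curve, using the constant $\bar Y$ (respectively $\bar Q$) as a supersolution. The bound $\bar Y \ge N$ is built into the definition precisely so that the Riccati right-hand side $b_2(N^2-\alpha^2)$ is nonpositive whenever $\alpha\ge\bar Y$, which is the mechanism that prevents $\alpha$ from exceeding $\bar Y$.

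More concretely, I would fix an arbitrary point $(t_0,x_0)\in[0,T)\times\mathbb{R}$ and let $x=x^+(t;x_0)$ denote the forward characteristic through $(t_0,x_0)$ traced back to $t=0$, hitting the initial line at $x^+(0;x_0)=x^*$. Along this curve, set $A(t):=\alpha(t,x^+(t;x_0))$, so that $A'(t)=\partial_+\alpha\big|_{(t,x^+(t;x_0))}$, and \eqref{a0a2n3} reads
\begin{equation*}
A'(t)\le B(t)\bigl(N^2-A(t)^2\bigr), \qquad B(t):=b_2(t,x^+(t;x_0))>0,
\end{equation*}
with $A(0)=\alpha(0,x^*)\le\sup_x\alpha(0,x)\le\bar Y$.

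The key step is to argue that $A(t)\le\bar Y$ for all $t\in[0,t_0]$. I would proceed by contradiction: if not, continuity of $A$ together with $A(0)\le\bar Y$ yields a first time $t_1\in(0,t_0]$ with $A(t_1)=\bar Y$ and an upper right derivative $D^+A(t_1)\ge 0$. Evaluating the inequality at $t_1$ gives
\begin{equation*}
0\le D^+A(t_1)\le B(t_1)\bigl(N^2-\bar Y^2\bigr)\le 0,
\end{equation*}
since $\bar Y\ge N$ and $B(t_1)>0$. Hence equality holds throughout and, by a slightly sharper argument (comparing $A$ with the ODE solution $h$ of $h'=B(N^2-h^2)$, $h(t_1)=\bar Y$, which is nonincreasing for $h\ge N$), $A$ cannot strictly exceed $\bar Y$ on any interval to the right of $t_1$, contradicting the choice of $t_1$. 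Thus $A(t_0)=\alpha(t_0,x_0)\le\bar Y$, and since $(t_0,x_0)$ was arbitrary, the desired bound for $\alpha$ follows. The bound for $\beta$ is obtained in exactly the same way by replacing $\partial_+$ with $\partial_-$ and tracing backward characteristics.

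The only mildly delicate point is the comparison step itself: the coefficient $B(t)$ is not bounded below away from zero in general (since $a$ may approach vacuum in the far field), but positivity of $B$ is all that is needed for the sign argument above to go through, and the Lipschitz continuity of the right-hand side $\alpha\mapsto B(t)(N^2-\alpha^2)$ in $\alpha$ on any bounded set is automatic. No quantitative lower bound on $B$ enters, so the argument is robust to far-field vacuum. I therefore do not expect any serious obstacle; the proof is a clean application of the standard scalar Riccati comparison principle once \eqref{a0a2n3} has been established.
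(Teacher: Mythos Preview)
Your proposal is correct and follows exactly the route the paper takes: the paper simply writes ``Following from the comparison theorem for ODEs'' immediately after deriving \eqref{a0a2n3}, and states the lemma without further detail. Your argument is just an explicit unpacking of that comparison along characteristics. One small stylistic remark: the contradiction step (finding a first crossing time $t_1$ and then falling back on comparison with the genuine Riccati solution $h$) is a detour---you may as well compare $A$ directly, from $t=0$, with the solution $h$ of $h'=B(t)(N^2-h^2)$, $h(0)=\bar Y$, which is nonincreasing and hence $\le\bar Y$; this is the one-step comparison the paper has in mind.
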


The following lemma contains the density lower bound estimate.

\begin{lemma}\label{density_low_bound_3.1}
Let $(\rho(t,x), u(t,x), S_0(x))$ be a $C^1$ solution of the Cauchy problem
\eqref{lagrangian1q}--\eqref{lagrangian3q} and \eqref{initial1}
defined on the time interval $[0, T)$ for some $T>0$,
with initial data $(\rho_0(x), u_0(x), S_0(x))$ satisfying
Conditions {\rm\ref{initialdata2}}--{\rm\ref{bv}}.
If $1<\gamma<3$,  then, for any $x\in \mathbb{R} $ and $t\in [0, T)$,
there is a positive constant $K_{**}$ depending only on $\gamma$ and $M_U$ such that
\[
v(t,x)\le \Big({v_0}^{\frac{3-\gamma}{4}}(x)+K_{**}({\bar Y}+{\bar Q}) t\Big)^{\frac{4}{3-\gamma}}.
\]
\end{lemma}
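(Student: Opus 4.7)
The plan is to mirror the argument used for the isentropic case (Lemma \ref{density_low_bound_1-3}), with the main modification being to account for the $\hat{S}$--factor that appears in the definition \eqref{intr mainq} of $(\alpha,\beta)$. The key observation is that the entropy--gradient correction terms $\pm \frac{2}{3\gamma-1}\hat{S}_x a$ inside the definitions of $\alpha$ and $\beta$ have opposite signs, so they cancel upon summation. More precisely, since $s=u+\hat{S}a$ and $r=u-\hat{S}a$, we have $s_x+r_x=2u_x$, and hence
\[
\alpha+\beta \;=\; 2\,\hat{S}^{-\frac{3(3-\gamma)}{2(3\gamma-1)}}\,a^{\frac{\gamma+1}{2(\gamma-1)}}\,u_x.
\]

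Next, I would use the mass equation \eqref{lagrangian1q} in the form $v_t=u_x$, together with the relation $a=(K_v/v)^{(\gamma-1)/2}$ obtained from \eqref{tau p c}, to convert the equation into a closed inequality for $v$. This yields
\[
v^{-\frac{\gamma+1}{4}}\,v_t \;=\; \tfrac{1}{2}\,K_v^{-\frac{\gamma+1}{4}}\,\hat{S}^{\frac{3(3-\gamma)}{2(3\gamma-1)}}\,(\alpha+\beta).
\]
Applying the uniform upper bound $\hat{S}\leq M_U$ from \eqref{m_bounds} (valid since $1<\gamma<3$ makes the exponent $\frac{3(3-\gamma)}{2(3\gamma-1)}>0$) and the estimate $\alpha+\beta\leq \bar Y+\bar Q$ from Lemma \ref{full_lemma1}, we obtain
\[
v^{-\frac{\gamma+1}{4}}\,v_t \;\le\; \tfrac{1}{2}\,K_v^{-\frac{\gamma+1}{4}}\,M_U^{\frac{3(3-\gamma)}{2(3\gamma-1)}}\,(\bar Y+\bar Q).
\]

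Finally, since $\frac{\gamma+1}{4}<1$ for $1<\gamma<3$, the left-hand side is $\frac{4}{3-\gamma}\,\partial_t\bigl(v^{\frac{3-\gamma}{4}}\bigr)$, so a simple time integration at fixed $x$ produces the claimed inequality
\[
v(t,x) \;\le\; \Bigl(v_0^{\frac{3-\gamma}{4}}(x) + K_{**}(\bar Y+\bar Q)\,t\Bigr)^{\frac{4}{3-\gamma}},
\]
with $K_{**}:=\tfrac{3-\gamma}{8}\,K_v^{-\frac{\gamma+1}{4}}\,M_U^{\frac{3(3-\gamma)}{2(3\gamma-1)}}$, depending only on $\gamma$ and $M_U$.

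The substantive step is really the algebraic identity $\alpha+\beta=2\,\hat{S}^{-\frac{3(3-\gamma)}{2(3\gamma-1)}}a^{\frac{\gamma+1}{2(\gamma-1)}}u_x$, together with the verification that Lemma \ref{full_lemma1} indeed provides the needed one-sided bound on $\alpha+\beta$. Once these are in hand, the proof is a straightforward ODE integration and essentially identical in structure to Lemma \ref{density_low_bound_1-3}; I do not expect any genuine obstacle. The only point requiring a brief comment is that $\bar Y$ and $\bar Q$ are defined as suprema of $\alpha$ and $\beta$, so one should verify that these suprema are finite; this is immediate from Condition \ref{initialdata2} on the initial data and the constant $N$ from \eqref{Ndefq}, both of which depend only on $(\gamma,K,c_v)$ and the initial data.
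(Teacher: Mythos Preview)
Your proposal is correct and follows essentially the same route as the paper: both use the cancellation of the $\hat{S}_x$--terms to get $\alpha+\beta=2\,\hat{S}^{-\frac{3(3-\gamma)}{2(3\gamma-1)}}a^{\frac{\gamma+1}{2(\gamma-1)}}u_x$, then combine $v_t=u_x$, the bound $\hat{S}\le M_U$, Lemma~\ref{full_lemma1}, and the $a$--$v$ relation to obtain a differential inequality for $v^{\frac{3-\gamma}{4}}$ that integrates directly. Your constant $K_{**}=\tfrac{3-\gamma}{8}K_v^{-\frac{\gamma+1}{4}}M_U^{\frac{3(3-\gamma)}{2(3\gamma-1)}}$ is in fact the same as the paper's (once one notes $K_v^{-\frac{\gamma+1}{4}}=\big(\tfrac{2\sqrt{K\gamma}}{\gamma-1}\big)^{-\frac{\gamma+1}{2(\gamma-1)}}$), up to a harmless factor of $2$ arising from a dropped $\tfrac12$ in the paper's display.
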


\begin{proof} From
\eqref{lagrangian1q}, \eqref{r_s_def}, and Lemma \ref{full_lemma1},
it is clear that
\begin{align*}
v_t=u_x &=\frac12 (r_x+s_x)=\hat{S}^{\frac{3(3-\gamma)}{2(3\gamma-1)}}\,
       a^{-\frac{\gamma+1}{2(\gamma-1)}}(\alpha+\beta)\\
     &\le M_U^{\frac{3(3-\gamma)}{2(3\gamma-1)}}\,
       a^{-\frac{\gamma+1}{2(\gamma-1)}}\big({\bar Y}+{\bar Q}\big).
\end{align*}
Then, using \eqref{z def}, we have
$$
v^{-\frac{\gamma+1}{4}}v_t\le  M_U^{\frac{3(3-\gamma)}{2(3\gamma-1)}}\,
       \big(\TS\frac{2\sqrt{K\gamma}}{\gamma-1}\big)^{-\frac{\gamma+1}{2(\gamma-1)}}
       \big({\bar Y}+{\bar Q}\big),
$$
which implies
$$
v(t,x)\le \big(v_0(x)+ K_{**} ({\bar Y}+{\bar Q}) t\big)^{\frac{4}{3-\gamma}},
$$
where
$$
K_{**}=\frac{3-\gamma}{4}M_U^{\frac{3(3-\gamma)}{2(3\gamma-1)}}\,
       \big(\TS\frac{2\sqrt{K\gamma}}{\gamma-1}\big)^{-\frac{\gamma+1}{2(\gamma-1)}}.
$$
\end{proof}

We now prove Theorem \ref{Thm singularity2}.

\medskip
\noindent
{\it Proof of Theorem {\rm \ref{Thm singularity2}}}.
It suffices to show the finite-time blowup of $\alpha$ and $\beta$
under assumption \eqref{yq-N}.
Without loss of generality, assume that
\beq\label{-N2}
\alpha(0,x^*)<-N \qquad\,\,\mbox{for some $x^*\in \RR$}.
\eeq
We now prove the blowup of $\alpha$ along the forward characteristic starting
from $(0,x^*)$. The proof is similar when $\beta<-N$ somewhere initially.

According to (\ref{-N2}), there exists some $\eps>0$ such that
\beq\label{SS90}
  \alpha(0,x^*) <- (1+\eps)\,N.
\eeq

Denote
\[
x=\Gamma(t; x^*)\qquad\hbox{or equivalently}\quad t=\Omega(x; x^*)
\]
as the forward characteristic starting from $(0,x^*)$.
From Lemma \ref{density_low_bound_3.1}, we have
$$
x=\Gamma(t; x^*)=x^*+\int_0^t c(s,\Gamma(s; x^*))\,\text{d}s
$$
is strictly increasing on $t$, and $\Omega(x; x^*)$ is strictly increasing on $x$.
Then we know that
$\lim_{t\rightarrow\infty}\Gamma(t; x^*)$
must exist or equal to infinity,
due to the monotonicity of $\Gamma(t; x^*)$ for fixed $x^*$.
Accordingly, we divide the rest of the proof into two cases.

\medskip
\paragraph{\bf Case 1}
\, $\lim_{t\rightarrow\infty}\Gamma(t; x^*)=\infty$.
$\,$ For any $t\ge 0$, $\Gamma(t; x^*)$ is well-defined and takes its value in $[x^*,\infty]$.
By \eqref{a0a2n2}--\eqref{a0a2n3}
and the comparison theorem for ODEs,
$$
\alpha(t,\Gamma(t; x^*)) \le \alpha(0,x^*)  < -(1+\eps)\,N.
$$
Therefore, we have
$$
\frac{\alpha^2(t,\Gamma(t; x^*)) }{(1+\eps)^2}> N^2\ge \frac{b_0}{b_2},
$$
which implies
\[
 \pp \alpha(t,\Gamma(t; x^*)) = b_2\Big(\frac{b_0}{b_2}-\alpha^2(t,\Gamma(t; x^*))\Big)
  <-{\TS\frac{\eps(2+\eps)}{(1+\eps)^2}}\, b_2\,\alpha^2(t,\Gamma(t; x^*)).
\]
Then
\[
\frac{\partial \alpha(\Omega(x; x^*), x)}{\partial x}=\frac{b_2}{c} \, \Big(\frac{b_0}{b_2}-\alpha^2\Big)
<-{\frac{\eps(2+\eps)}{(1+\eps)^2}}\,\frac{b_2}{c}\,\alpha^2(\Omega(x; x^*), x).
\]
Integrating the above in $t$ leads to
\[
  \frac{1}{\alpha(\Omega(x; x^*), x)} \ge {\frac{1}{\alpha(0,x^*)} + {\frac{\eps(2+\eps)}{(1+\eps)^2}}
    \int_{x_*}^x K_5 \rho^{\frac{1-3\gamma}{4}}(\alpha(\Omega(z; x^*), z))\;\dd z},
\]
where
\begin{align}\label{a2*}
K_5:= {\TS\frac{(\gamma+1)K_c}{2(\gamma-1)\sqrt{\gamma K}}}\, \hat{S}^{\frac{11-9\gamma}{2(3\gamma-1)}}
        \Big(\frac{2\sqrt{\gamma K}}{\gamma-1}\Big)^{\frac{3-\gamma}{2(\gamma-1)}}.
\end{align}
Using (\ref{m_bounds}), we see that there exist two positive constants $A_1$ and $A_2$ such that
\begin{equation}\label{mbound}
 0<A_1\leq K_5\leq A_2<\infty.
\end{equation}

From Lemma \ref{Thm_upper}, we have
\begin{equation}\label{inversebound2}
\rho^{\frac{1-3\gamma}{4}}(t,x)\geq M^{\frac{1-3\gamma}{4}}_\rho \qquad\,\, \text{for any}\ (t,x)\in [0,T]\times \mathbb{R}.
\end{equation}
Then it follows from (\ref{mbound})--(\ref{inversebound2}) that
\beq{\label{blowupa2}}
\int_{x_*}^\infty \, \big(\frac{b_2}{c}\big)(\Omega(z; x^*), z) \;\dd z
=\infty= \int_{x_*}^\infty K_5 \rho^{\frac{1-3\gamma}{4}}(\Omega(z; x^*), z)\;\dd z.
\eeq
Thus, there must be some point $x_0>x_*$ such that
$$
{\frac{1}{\alpha(0,x^*)} + {\frac{\eps(2+\eps)}{(1+\eps)^2}}
    \int_{x_*}^x K_5 \rho^{\frac{1-3\gamma}{4}}(\alpha(\Omega(z; x^*), z))\;\dd z}=0$$
under assumption \eqref{-N2}.
This implies that $\alpha$ blows up in a finite time.

\smallskip
\medskip
\paragraph{\bf Case 2}
{\it If Case {\rm 1} fails, there must be
\beq{\label{aaa1}}
\lim_{t\rightarrow\infty}\Gamma(t; x^*)=\hbox{L}.
\eeq
}

Along this characteristic $\Gamma(t; x^*)$,
we see from the definition of $N$ that, for any $t\ge 0$ such that $\Gamma(t; x^*)$ is well-defined,
\[
 \pp \alpha(t,\Gamma(t; x^*))  =b_2\Big(\frac{b_0}{b_2}-\alpha^2\Big)<0
\]
and
$$
 \alpha(t,\Gamma(t; x^*)) \le \alpha(0,x_0)  < -(1+\eps)\,N.
$$
Therefore, we have
$$
\frac{\alpha^2(t,\Gamma(t; x^*)) }{(1+\eps)^2}> N^2\ge \frac{b_0}{b_2},
$$
which implies
\[
 \pp \alpha (t,\Gamma(t; x^*)) = b_2\Big(\frac{b_0}{b_2}-\alpha^2(t,\Gamma(t; x^*))\Big)
       <-{\TS\frac{\eps(2+\eps)}{(1+\eps)^2}}\, b_2\,\alpha^2(t,\Gamma(t; x^*)) \,.
\]
Integrating the above in $t$, we have
\[
  \frac{1}{\alpha(t,\Gamma(t; x^*)) } \ge {\frac{1}{\alpha(0,x_0)} + {\frac{\eps(2+\eps)}{(1+\eps)^2}}
    \int_0^t {b_2}(\sigma,\Gamma(\sigma; x^*)) \;\dd\sigma},
\]
where the integral is along the forward characteristic.
To show that $\alpha$ blows up in a finite time, it suffices to show
\beq\label{blowupa2_2}
\int_0^\infty \, {b_2(t,\Gamma(t; x^*)) }\;\dd t=\infty.
\eeq

When $\gamma\geq 3$, from the definition of $b_2$ in \eqref{adefsq}, we have
$$
b_2\ge  {\TS\frac{(\gamma+1)K_c}{2(\gamma-1)}}\,
	M_L^{\frac{3(3-\gamma)}{2(3\gamma-1)}}\,
        E_U^{\frac{3-\gamma}{2(\gamma-1)}},
$$
so that \eqref{blowupa2_2} follows.

When $1<\gamma<3$, by Lemma \ref{density_low_bound_3.1} and
the definition of $b_2$ in \eqref{adefsq}, we have
\[
b_2(t,x) \geq K_c\,{\TS\frac{\gamma+1}{2(\gamma-1)}}\,
	M_L^{\frac{3(3-\gamma)}{2(3\gamma-1)}}  \big(\TS\frac{2\sqrt{\gamma K}}{\gamma-1}\big)^{\frac{3-\gamma}{2(\gamma-1)}}
   \Big(v_0^{\frac{3-\gamma}{4}}(x)+K_6({\bar Y}+{\bar Q})t\Big)^{-1}
\]
with $x=\Gamma(t; x^*)\in[x^*,L)$.
Since $v_0(x)$ with $x=\Gamma(t; x^*)\in[x^*,L)$ is bounded above
by some constant depending on $x^*$ and $L$, but independent of $t$,
\eqref{blowupa2_2} holds.

\smallskip
Therefore,  for any $\gamma>1$,
$\alpha$ blows up in a finite time.
This completes the proof.

\section{Decompression Phenomenon and Global Continuous Solutions for the Nonisentropic Euler Equations}
By Theorem \ref{p_sing_thm}, we know that, for the isentropic flow,
there exists no global $C^1$ solution as long as the initial data contain any compression.
However, this is not the case for the nonisentropic solutions,
which shows much richer structures of the solutions (also see \cite{youngblake1}).

In this section, we present a new phenomenon -- decompression -- of the nonisentropic Euler flow
by constructing a global continuous solution starting from the initial data that contain a weak compression.
The main idea used in constructing the global continuous solution
is to vary the shock-free example
consisting of a contact discontinuity, a rarefaction, and a compression simple wave,
to the continuous solution,
by stretching the contact wave to a smoothly varying entropy profile.
See Fig. \ref{example12}.

\begin{figure}[htp] \centering
\includegraphics[width=.25\textwidth]{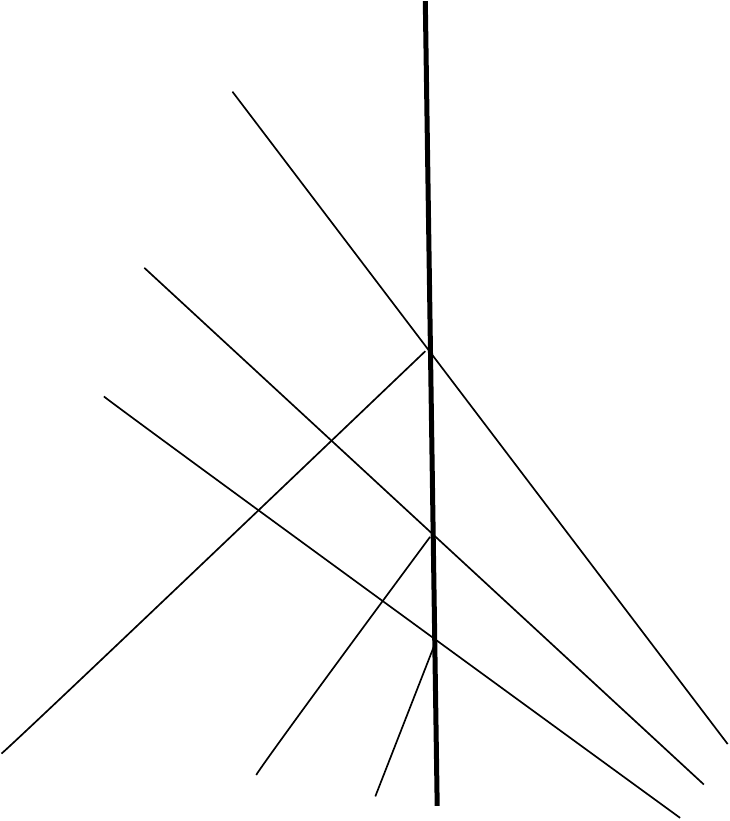}\hspace{20mm}
\includegraphics[width=.3\textwidth]{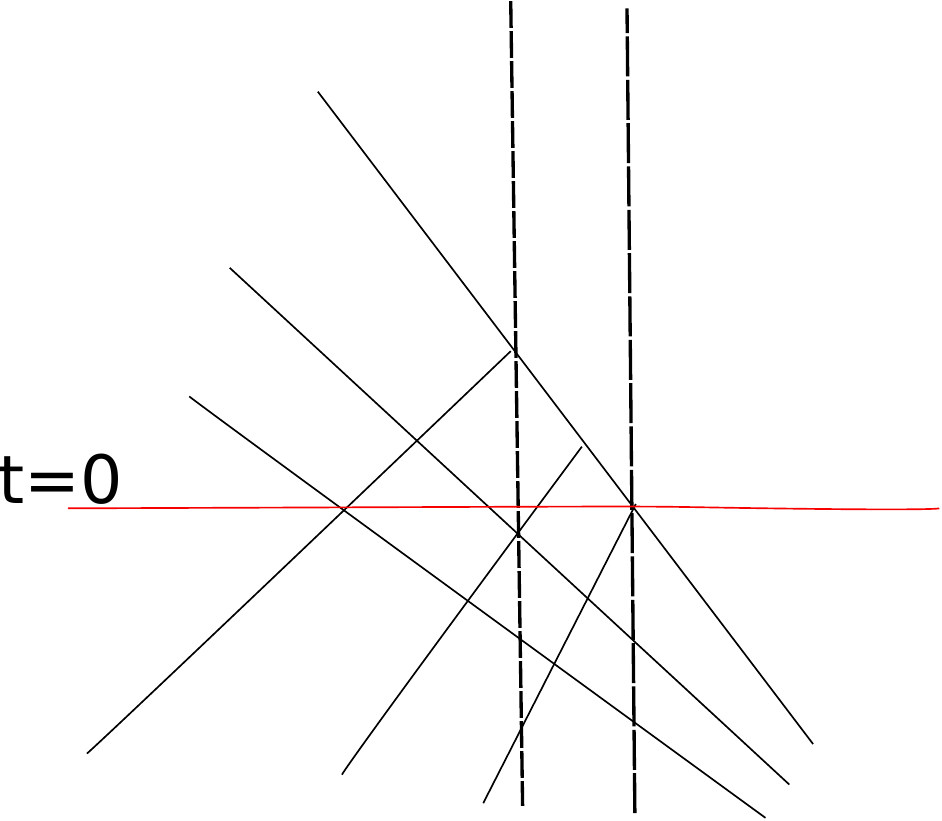}
{
\caption{Vary from a contact wave to a solution with smoothly varying entropy. In the left subfigure, a forward compression
is cancelled by a backward rarefaction at a contact discontinuity. In the right subfigure,
a forward compression is cancelled inside a region with smoothly varying entropy.
Both subfigures are on the $(t,x)$--plane.
\label{example12}}}
\end{figure}
{
Here, the existence of the wave pattern in the left subfigure of Fig. \ref{example12} was proved earlier;
see  \cite{G6,youngblake1}.
The solution is isentropic on each side of the contact discontinuity.
In this section, we prove the existence of the wave pattern in the right subfigure.
Some new difficulties arise compared with  those in  \cite{G6,youngblake1},
since the solution we want to construct is continuous.}

According to Fig. \ref{example12}, it suffices to consider Fig. \ref{example3}.
\begin{figure}[htp] \centering
		\includegraphics[width=.5\textwidth]{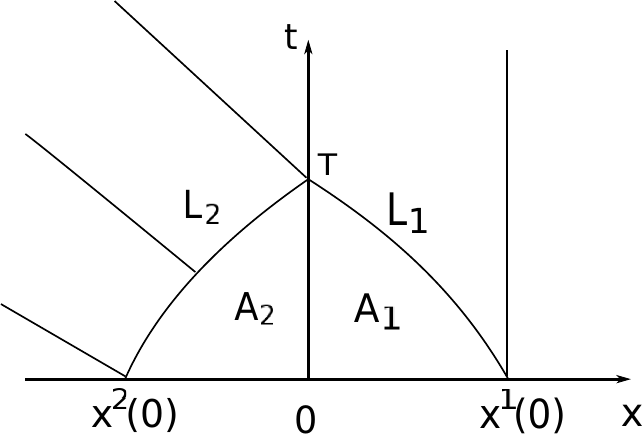}
		\caption{Decompression solutions. This figure is from the right subfigure of Fig. {\rm \ref{example12}},
which shows a backward Goursat problem on region $A(T)=A_1\cup A_2$
with two boundary characteristics $L_1$ and $L_2$. \label{example3}}
	\end{figure}

The key part of construction of the desired global continuous  solution, as shown in  Fig. \ref{example3},
is to solve a backward (in time) Goursat
problem on the angular domain:
\beq\label{AT}
A(T):=A_1\cup A_2=\big\{(t,x)\,:\, x^2(t)\leq x\leq x^1(t),\, t \in [0,T]\big\},
\eeq
with some time $T>0$ and the boundary conditions given on the backward and forward characteristic
boundaries $L_1$ and $L_2$ defined as
\begin{equation}\label{free boundary}
\begin{split}
L_1&:=\Big\{x=x^1(t),\ t\in[0,T]\, :\, \frac{\dd x^1(t)}{\dd t}=-c(t,x^1(t)),\, x^1(T)=0\Big\},\\[6pt]
L_2&:=\Big\{x=x^2(t), \ t\in[0,T]\, :\, \frac{\dd x^2(t)}{\dd t}=c(t,x^2(t)),\, x^2(T)=0\Big\},
\end{split}
\end{equation}
respectively, where $A_1$ and $A_2$ are both closed domains.

The rest of this section is organized as follows: \S 4.1 is devoted to   solving  the  backward (in time) Goursat
problem mentioned above on the angular domain $A(T)$ under some proper boundary conditions.
Then, in \S 4.2, we construct an example of  global continuous solutions
to  a Cauchy problem of  the nonisentropic Euler equations  when the initial data contain a weak compression.

\subsection{The backward Goursat problem}
Now we prove the existence for the backward Goursat problem
on $A(T)=A_1\cup A_2$,
with some given boundary data on $L_1$ and $L_2$ satisfying
\begin{equation}\label{givenproperty}
\xi=0 \,\,\,\, \text{on $L_1$}, \qquad\,\,\,\,  \zeta>0 \,\,\,\, \text{on $L_2$},
\end{equation}
for some time $T>0$  (see \eqref{AT}--\eqref{free boundary}), where $\xi$ and $\zeta$ are
defined in \eqref{def-alpha}--\eqref{def_beta}. In fact, it suffices to prove a local-in-time existence result,
since we can then find some $T>0$ as our starting point,
and the solution still satisfies the equations under any shift on $t$.

Now we prescribe the problem and boundary conditions;
see Fig. \ref{example3}.
First, we denote
\begin{equation}\label{rsnewform}
\tilde{s}=\hat{S}^{-\frac{1}{2\gamma}}s,\qquad \tilde{r}=\hat{S}^{-\frac{1}{2\gamma}}r.
\end{equation}
Then
$$
c=K_c \Big(\frac{\tilde{s}-\tilde{r}}{2}\Big)^{\frac{\gamma+1}{\gamma-1}}\hat{S}^{\frac{1-3\gamma}{2\gamma(\gamma-1)}},
$$
which means that $c=c(\tilde{s}, \tilde{r},\hat{S})$ is a $C^1$ function with respect to $(\tilde{s}, \tilde{r},\hat{S})$.

Using equations $(\ref{fulleuler1})_3$ and (\ref{s_eqn}), we see that  $(\tilde{s}, \tilde{r},\hat{S})$ satisfy
\begin{equation}\label{gusat problem}
\begin{cases}
\partial_+ \tilde{s}=-\frac{1}{2\gamma} \frac{c\hat{S}_x}{\hat{S}} \tilde{r},\\[1.5mm]
\partial_- \tilde{r}=\frac{1}{2\gamma} \frac{c\hat{S}_x}{\hat{S}} \tilde{s},\\[1.5mm]
\hat{S}_t=0.
\end{cases}
\end{equation}
The boundary conditions on $L_1$ and $L_2$ are given as
\begin{equation}\label{boundary condition1}
\begin{split}
L_1&:\ \tilde{s}(t,x^1(t))=h^1(t),\,\,\,\, \hat{S}(t,x^1(t))=f^1(t) \qquad\,\, \mbox{for $0\leq t \leq T$},\\[1mm]
L_2&:\  \tilde{r}(t,x^2(t))=h^2(t),\,\,\,\, \hat{S}(t,x^2(t))=f^2(t)\qquad\,\, \mbox{for $0\leq t \leq T$},
\end{split}
\end{equation}
where the characteristic boundaries $L_1$ and $L_2$ are defined in \eqref{free boundary}, and we assume
\beq\label{c5}
f^1(T)=f^2(T), \qquad c(T,0)
=K_c \Big(\frac{h^1(T)-h^2(T)}{2}\Big)^{\frac{\gamma+1}{\gamma-1}}\big(f^1(T)\big)^{\frac{1-3\gamma}{2\gamma(\gamma-1)}}>0.
\eeq
Then, for sufficiently small $T>0$,
$x^1(t)$ (resp., $x^2(t)$) in $[0,T]$  is a strictly decreasing (resp., increasing) function with respect to $t$.
It is direct to see that the two free boundaries $x^1(t)$ and $x^2(t)$ can be rewritten as
\begin{equation}\label{free boundary*}
\begin{split}
L_1&:\  \frac{\dd \Omega^1(x)}{\dd x}=-\frac{1}{c(\Omega^1(x),x)},\quad \Omega^1(0)=T,\\[0.5mm]
L_2&:\  \frac{\dd \Omega^2(x)}{\dd x}=\frac{1}{c(\Omega^2(x),x)},\quad \ \ \Omega^2(0)=T.
\end{split}
\end{equation}
Thus,  the  boundary conditions (\ref{boundary condition1}) can be given as
\begin{equation}\label{boundary condition2*}
\begin{split}
L_1&\,: \, \tilde{s}(\Omega^1(x),x)=\tilde{h}^1(x),\,\, \hat{S}(\Omega^1(x),x)=\tilde{f}^1(x)
         \qquad\,\, \mbox{for $x\in[0,x^1(0)]$},\\[0.5mm]
L_2&\,: \, \tilde{r}(\Omega^2(x),x)=\tilde{h}^2(x),\,\, \hat{S}(\Omega^2(x),x)=\tilde{f}^2(x)
  \qquad\,\, \mbox{for $x\in[x^2(0),0]$},
\end{split}
\end{equation}
satisfying
the following conditions:

\begin{condition}\label{construction}
For the boundary conditions \eqref{boundary condition2*} above,
\begin{enumerate}
\item[\rm (i)] $\tilde{f}^i(x)$, $i=1,2$, satisfy
\begin{equation}\label{smoothboundaryvalue}
\begin{cases}
  \tilde{f}^1(x)\in  \ C^1([0, x^1(0)]),\,\,\,  C^{-1} \leq  \tilde{f}^1(x) \leq C
       \,\,\,\, &\text{for $x\in [0,x^1(0)]$},\\
  \partial_x\tilde{f}^1(0)=0,\,\,\,
  \partial_x\tilde{f}^1(x)>0  &\text{for $x\in (0,x^1(0))$},\\
  \tilde{f}^2(x)\equiv    \tilde{f}^1(0)   &\text{for $x\in [x^2(0),0]$},
\end{cases}
\end{equation}
for some positive constant $C${\rm ;}

\smallskip
\item[\rm (ii)]
$\tilde{h}^1(x)$ and $\tilde{h}^2(x)$ satisfy
\begin{equation}\label{boundary condition3}
\begin{cases}
\displaystyle
 \tilde{h}^1(x)=\big(\tilde{f}^1(x)\big)^{-\frac{1}{2\gamma}}
 \Big(u_0+K^{\frac{1-\gamma}{2\gamma}}_p\big(\tilde{f}^1(x)\big)^{\frac{1}{\gamma}}p^{\frac{\gamma-1}{2\gamma}}_0\Big),\\[1mm]
 \displaystyle
 \tilde{h}^2(x)=\big(\tilde{f}^2(x)\big)^{-\frac{1}{2\gamma}}h_*(x),
\end{cases}
\end{equation}
where $u_0$ and $p_0>0$ are both constants, and $h_*(x)\in C^1([x^2(0),0])$ satisfies
\begin{equation}\label{increasing entropy}
h_*(0)=u_0- K^{\frac{1-\gamma}{2\gamma}}_p\big(\tilde{f}^2(0)\big)^{\frac{1}{\gamma}}p^{\frac{\gamma-1}{2\gamma}}_0,
\quad\,\,
\partial_x h_*(x)>0 \,\,\,\, \text{for $x\in [x^2(0),0]$}.
\end{equation}
\end{enumerate}
\end{condition}

It is direct to check that there exists functions $\tilde{h}^i$ and $\tilde{f}^i$, $i=1,2$, satisfying  Condition {\rm\ref{construction}}.
Then we have the following local-in-time existence for the backward Goursat problem,
which serves as the key part of the construction of
the desired global continuous solution as shown in  Theorem \ref{mainexistence} later:

\begin{theorem}\label{goursatlocal}
Let Condition {\rm\ref{construction}} hold.
Then there exist a time $T>0$  and a unique $C^1$ solution $(\tilde{s}, \tilde{r},\hat{S})$
of the backward Goursat problem \eqref{gusat problem} with \eqref{free boundary*}--\eqref{boundary condition2*},
or equivalently \eqref{gusat problem} with \eqref{free boundary} and \eqref{boundary condition1}, in $A(T)$ defined in \eqref{AT}.
Furthermore,
\begin{align}
&\xi=0\qquad \hbox{on}\ L_1, \label{c4}\\
&\zeta>0\qquad \hbox{in}\ A(T),  \label{c3}\\
&\xi<0 \qquad \text{in $A(T)\setminus (L_1\cup L_2)$}.\label{c5-a}
\end{align}
\end{theorem}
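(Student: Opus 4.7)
The plan is to proceed in four stages. First, I will invoke the local-in-time well-posedness theory for characteristic Goursat problems in angular domains developed in the appendix. Under Condition \ref{construction} and the compatibility condition \eqref{c5} at the apex $(T,0)$, this produces, for $T>0$ sufficiently small, a unique $C^1$ solution $(\tilde{s},\tilde{r},\hat{S})$ to \eqref{gusat problem} on $A(T)$ taking the prescribed boundary values on $L_1\cup L_2$, with $c$ and $\hat{S}$ remaining strictly positive throughout $A(T)$.

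For the identity $\xi=0$ on $L_1$ (claim \eqref{c4}), the key observation is that $\tilde{h}^1$ is engineered exactly so that $s=u_0+\hat{S}\,a\big|_{p=p_0}$ on $L_1$. Since $\partial_-$ is tangential to $L_1$, the equation $\partial_- r=\tfrac{c\hat{S}_x}{2\gamma\hat{S}}(s-r)$ from \eqref{s_eqn} becomes a linear scalar ODE along $L_1$ for $R(t):=r(t,x^1(t))$, with the apex value $R(T)=h_*(0)=u_0-\hat{S}(0)\,a\big|_{p=p_0}$ coming from the $L_2$ boundary datum at the apex. A short computation using $\tfrac{d}{dt}(\hat{S}\,a\big|_{p=p_0})=-\tfrac{c\hat{S}_x}{\gamma\hat{S}}\,\hat{S}\,a\big|_{p=p_0}$ along $L_1$ verifies that $R^{*}(t):=u_0-\hat{S}(x^1(t))\,a\big|_{p=p_0}$ solves this ODE with the correct value at $t=T$. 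ODE uniqueness then forces $R\equiv R^{*}$ on $L_1$, so $u\equiv u_0$ and $p\equiv p_0$ on $L_1$; in particular $\partial_- u=0$ there, and Lemma \ref{riclemma} gives $\xi=-\partial_- u/c=0$ on $L_1$.

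For \eqref{c3}, I first compute $\zeta$ on $L_2$: because $\hat{S}_x\equiv 0$ on $L_2$, \eqref{s_eqn} gives $\partial_+s=0$, while tangential differentiation of $r=h_*(x)$ along $L_2$ gives $\partial_+r=c\,h_*'(x)$, so $\zeta=\partial_+ u/c=\tfrac{1}{2}h_*'>0$ on $L_2$ by \eqref{increasing entropy}. To propagate $\zeta>0$ into $A(T)$, I would use the Riccati equation $\partial_-\zeta=k_1(-k_2(\xi+3\zeta)+\xi\zeta-\xi^2)$ from \eqref{frem1} along backward characteristics issuing from $L_2$, combined with the $L^\infty$ bounds from Step 1 and a continuity argument for $T$ small. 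For \eqref{c5-a}, evaluating \eqref{frem1} at $\xi=0$ on $L_1$ gives $\partial_+\xi=k_1k_2\zeta>0$, since $k_2\propto\hat{S}_x\,a>0$ on $L_1$ and $\zeta>0$ by the previous step. Because the interior of $A(T)$ lies in the $-\partial_+$ direction from $L_1$, this forces $\xi<0$ just inside $L_1$, and a Riccati-type comparison along forward characteristics extends this to all of $A(T)\setminus(L_1\cup L_2)$.

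The main obstacle I anticipate lies in the sign-propagation arguments of Steps 3 and 4: one must control the coupling of $\xi$ and $\zeta$ in the Riccati system \eqref{frem1} and the sign-indefinite driving term $b_0$ in \eqref{yq odesq}, especially in the subregion where $\hat{S}_x>0$. The smallness of $T$ inherited from Step 1 is what makes this manageable, since it prevents the Riccati dynamics from reversing the signs established on the boundary characteristics.
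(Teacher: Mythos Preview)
Your proposal is correct and follows essentially the same four-step strategy as the paper: local existence via the appendix Goursat theory, then verification of \eqref{c4}, \eqref{c3}, \eqref{c5-a} in that order, using the Riccati equations \eqref{frem1} and smallness of $T$. The one noteworthy variation is in Step~2: the paper computes $\partial_- s$ along $L_1$ two different ways, equates them, and derives a Gronwall-type ODE for $p-p_0$ to conclude $(u,p)\equiv(u_0,p_0)$; you instead use the evolution equation $\partial_- r=\tfrac{c\hat S_x}{2\gamma\hat S}(s-r)$ directly as an ODE for $r$ along $L_1$ (since $\partial_-$ is tangential there) and verify by inspection that $r=u_0-\hat S\,a|_{p=p_0}$ is the unique solution---this is a cleaner route to the same conclusion. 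In Step~4 you should be slightly more explicit that $\hat S_x(0)=0$ forces $k_2=0$ at the vertex, and that in the isentropic subregion $A_2$ the equation $\partial_+\xi=k_1\xi(\zeta-\xi)$ has $\xi=0$ as an equilibrium, so $\xi$ inherited as negative from $A_1$ stays negative there; the paper spells this out as the $A_1$/$A_2$ dichotomy.
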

\smallskip

\begin{proof} Condition {\rm\ref{construction}} implies that $A_1$ is a nonisentropic region and $A_2$ is an isentropic region,
which will be used directly to make sure that property \eqref{givenproperty} $($see \eqref{c4}--\eqref{c3}$)$
holds on $L_1$ and $L_2$, whose details will be given below.
For the convenience of our proof,
we give the boundary conditions in Condition {\rm\ref{construction}}
through the  independent variable $x$, which can also be  given  equivalently through $t$ by relation \eqref{free boundary*}.
The proof is divided into four steps.

\medskip
\textbf{1.} {\it Local existence of the backward Goursat problem}.
In order to solve the backward Goursat problem (\ref{gusat problem}) with \eqref{free boundary} and (\ref{boundary condition1}),
we reformulate it as a fixed boundary problem under some coordinate transformations.

First, we reformulate the backward Goursat problem (\ref{gusat problem}) with \eqref{free boundary} and (\ref{boundary condition1})
by introducing the following new coordinates:
\begin{equation}\label{normalchange}
\tau=T-t,\quad y=x.
\end{equation}
Denote
\begin{equation}\label{rsnewform1}
\tilde{s}_*(\tau,y)=\tilde{s}(T-\tau,y),\quad \tilde{r}_*(v,y)=\tilde{r}(T-\tau,y), \quad \hat{S}(y)=\hat{S}|_{x=y}.
\end{equation}
Then equations (\ref{gusat problem}) can be rewritten as
\begin{equation}\label{gusat problem-refor}
\begin{cases}
\partial_- \tilde{s}_*=\frac{1}{2\gamma} \frac{\tilde{c}\hat{S}_y}{\hat{S}} \tilde{r}_*,\\[1mm]
\partial_+ \tilde{r}_*=-\frac{1}{2\gamma} \frac{\tilde{c}\hat{S}_y}{\hat{S}} \tilde{s}_*,\\[1mm]
\partial_{\tau}\hat{S}=0,
\end{cases}
\end{equation}
where
\begin{equation}\label{free boundary-refor}
\begin{split}
&\tilde{c}(\tau,y)=c(T-\tau,y,\tilde{s}_*, \tilde{r}_*)=K_c \Big(\frac{\tilde{s}_*-\tilde{r}_*}{2}\Big)^{\frac{\gamma+1}{\gamma-1}}m^{\frac{1-3\gamma}{2\gamma(\gamma-1)}},\\
&\partial_-=\partial_\tau-\tilde{c} \partial_y,\quad    \partial_+=\partial_\tau+\tilde{c} \partial_y.
\end{split}
\end{equation}

The new vertex for the reformulated Goursat problem is $\hat{O}=(0,0)$,
and the two free boundaries can be given by
\begin{equation}\label{free boundary-refor_2}
\begin{split}
\hat{L}_1&:\  \frac{\dd y^1(\tau)}{\dd\tau}=\tilde{c}(\tau,y^1(\tau)),\quad y^1(0)=0,\\[1mm]
\hat{L}_2&:\  \frac{\dd y^2(\tau)}{\dd\tau}=-\tilde{c}(\tau,y^2(\tau)),\quad y^2(0)=0.
\end{split}
\end{equation}
Then the time-space domain under consideration is
$$
\tilde{A}(T)=\big\{(\tau,y)\,:\, y^2(\tau)\leq y\leq y^1(\tau), \tau \in [0,T]\big\},
$$
and  the boundary conditions are
\begin{equation}\label{boundary condition-refor}
\begin{split}
\hat{L}_1&:\ \tilde{s}_*(\tau,y^1(\tau))=n^1(\tau),\,\,\, \hat{S}(\tau,y^1(\tau))=l^1(\tau) \qquad\, \text{for $0\leq \tau \leq T$},\\[1mm]
\hat{L}_2&:\  \tilde{r}_*(\tau,y^2(\tau))=n^2(\tau),\,\,\, \hat{S}(\tau,y^2(\tau))=l^2(\tau)\qquad\, \text{for $0\leq \tau \leq T$},
\end{split}
\end{equation}
where $n^i=h^i(T-\tau)$ and $l^i=f^i(T-\tau)$, $i=1,2$, are all $C^1$ functions in $[0,T]$.

\medskip
Next, we reformulate the above normal Goursat problem  (\ref{gusat problem-refor})--(\ref{boundary condition-refor})
into a fixed boundary problem by introducing  the following transformation:
\begin{equation}\label{fixboundary}
\overline{t}=\tau,\quad \overline{x}=\frac{y-y_2(\tau)}{T(\tau)}, \quad T(\tau)=\frac{y_1(\tau)-y_2(\tau)}{\tau}.\end{equation}
The two characteristic free boundaries can be formulated as the following two fixed boundaries:
$$
 \Gamma_1\,:\,\overline{t}=\overline{x}, \qquad\,\,\,  \Gamma_2\,:\,\overline{x}=0.
$$
The angular domain $\tilde{A}(T)$ has also been changed into the following domain:
$$
R(T)=\big\{(\overline{t},\overline{x})\;:\; 0\leq \overline{t}\leq T, \,\, 0\leq \overline{x}\leq \overline{t}\big\}.
$$

Denote
\begin{equation*}
\begin{array}{lll}
&X(\overline{t},\overline{x})=y_2(\overline{t})+\overline{x}T(\overline{t}),\quad &h(\overline{t},\overline{x})=\tilde{s}_*(\overline{t},X(\overline{t},\overline{x})),\\[2mm]
&g(\overline{t},\overline{x})=\tilde{r}_*(\overline{t},X(\overline{t},\overline{x})), \quad &\overline{c}(\overline{t},\overline{x})=\tilde{c}(\overline{t},X(\overline{t},\overline{x}),h,g),\\[2mm]
& \overline{\hat{S}}(\overline{t},\overline{x})=\hat{S}(X(\overline{t},\overline{x})),\quad  &\overline{\hat{S}_y}(\overline{t},\overline{x})=\hat{S}_y(X(\overline{t},\overline{x})),\\[2mm]
&\mu_1(\overline{t},\overline{x},h,g)=\frac{\overline{c}}{2\gamma}\,\frac{ \overline{\hat{S}_y}}{\overline{\hat{S}}} g,\quad
   &\mu_2(\overline{t},\overline{x},h,g)=-\frac{\overline{c}}{2\gamma}\,\frac{ \overline{\hat{S}_y}}{\overline{\hat{S}}} h.
\end{array}
\end{equation*}
From (\ref{gusat problem-refor}), we obtain the new system:
\begin{equation}\label{fixedboundaryproblem}
\begin{cases}
\partial_{\overline{t}} h-(\overline{c}+\partial_{\overline{t}}X)\frac{1}{T(\overline{t})}\partial_{\overline{x}}h=\mu_1,\\[2mm]
\partial_{\overline{t}} g+(\overline{c}-\partial_{\overline{t}}X)\frac{1}{T(\overline{t})}\partial_{\overline{x}}g=\mu_2,
\end{cases}
\end{equation}
and the boundary conditions on $\Gamma_i$, $i=1,2$:
\begin{equation}\label{fixedboundarycondition}
\begin{cases}
\Gamma_1(\overline{t}): \quad h(\Gamma_1(\overline{t}))=n^1(\overline{t})\in C^1([0,T]),\\[1.5mm]
\Gamma_2(\overline{t}):\quad g(\Gamma_2(\overline{t}))=n^2(\overline{t})\in C^1([0,T]),
\end{cases}
\end{equation}
where $n^i(\overline{t})=h^i(T-\overline{t})$, $i=1,2$.

We now prove the desired local well-posedness of the boundary value problem (\ref{fixedboundaryproblem})--(\ref{fixedboundarycondition}),
which actually implies the desired local existence in  Theorem \ref{goursatlocal}.

In fact, the local-in-time well-posedness of the  boundary value problem in some angular domain has been
established in  Theorem \ref{thA} in Appendix A.
Now we only need to show that the coefficients of the equations
and  the boundary conditions  of  problem (\ref{fixedboundaryproblem})--(\ref{fixedboundarycondition})
satisfy the corresponding requirements in Theorem \ref{thA}:

\smallskip
(i) For the boundary conditions,
since $h^i$ and $f^i$ are all $C^1$ functions in $[0,T]$ for $i=1,2$,
then
$$
n^i(\overline{t})=h^i(T-\overline{t}), \quad l^i(\overline{t})=f^i(T-\overline{t}), \qquad\,\, i=1,2,
$$
are also all $C^1$ functions in $[0,T]$.

\smallskip
(ii) For the coefficients, it is clear that
\begin{equation*}
\begin{split}
&\overline{\lambda}_1(\overline{t},\overline{x},h,g)=-(\overline{c}+\partial_{\overline{t}}X)\frac{1}{T(\overline{t})},\quad \overline{\lambda}_2(\overline{t},\overline{x},h,g)=(\overline{c}-\partial_{\overline{t}}X)\frac{1}{T(\overline{t})},\\[1mm]
&\mu_1(\overline{t},\overline{x},h,g)=\frac{\overline{c}}{2\gamma}\,\frac{ \overline{\hat{S}_y}}{\overline{\hat{S}}} g,
\quad  \mu_2(\overline{t},\overline{x},h,g)=-\frac{\overline{c}}{2\gamma}\,\frac{ \overline{\hat{S}_y}}{\overline{\hat{S}}} h,\\[1mm]
&(\overline{\lambda}_i)_x,\, (\mu_i)_x,\, (\overline{\lambda}_i)_h,\, (\overline{\lambda}_i)_g,\,
(\mu_i)_h,\, (\mu_i)_g, \qquad\,\, i=1,2,
\end{split}
\end{equation*}
are
all continuous  with respect to all the variables $(\overline{t},\overline{x},h,g)$ in their corresponding domains.

\smallskip

(iii)  It follows from \eqref{free boundary-refor_2}, \eqref{fixboundary}, and a direct calculation that
$$
\overline{\lambda}_1(\overline{t},\overline{x},h,g)|_{\Gamma_2}=0, \qquad \overline{\lambda}_2(\overline{t},\overline{x},h,g)|_{\Gamma_1}=1.
$$

Then assumptions (i)--(iii) of Appendix A.2 hold automatically.
Thus,  they satisfy all the requirements of Theorem \ref{thA}.

Based on the above analysis,  Theorem \ref{thA} implies that
there exists some small $\delta>0$  such that  the boundary value problem (\ref{fixedboundaryproblem})--(\ref{fixedboundarycondition})
has a unique $C^1$ solution $(h,g)$ in the angular domain
$$
R(\delta)=\big\{(\overline{t},\overline{x})\,:\, 0\leq \overline{t}\leq \delta, \,\, 0\leq \overline{x}\leq \overline{t}\big\}.
$$
This implies  the desired local well-posedness of our backward Goursat problem in Theorem \ref{goursatlocal}.

\medskip
\textbf{2.} {\it Verification of \eqref{c4}}. Following the above step,
there exists  a unique $C^1$ solution $(\tilde{s}, \tilde{r},\hat{S})$ in $A(T)$ of the backward Goursat problem (\ref{gusat problem})
with \eqref{free boundary} and (\ref{boundary condition1}), when $T$ is small enough.

First, it  follows from Condition \ref{construction} that, at the vertex point
$
O=(T,0)
$,
\begin{equation}\label{vertex point}
u(O)=u_0, \quad p(O)=p_0.
\end{equation}

Next, from (\ref{rsnewform}), $(\ref{gusat problem})_3$, and  (\ref{boundary condition3}), we have
$$
s=u_0+K^{\frac{1-\gamma}{2\gamma}}_p\big(\tilde{f}^1(x)\big)^{\frac{1}{\gamma}}p^{\frac{\gamma-1}{2\gamma}}_0
\qquad \text{on $L_1$}.
$$
Then it follows from a direct calculation that, in $L_1$,
\begin{align}
\partial_-s=&\, s_t-cs_x=-c \frac{\dd s}{\dd x}
= -\frac{c}{\gamma}\, K^{\frac{1-\gamma}{2\gamma}}_p\big(\tilde{f}^1(x)\big)^{\frac{1}{\gamma}-1}\partial_x \tilde{f}^1(x)\, p^{\frac{\gamma-1}{2\gamma}}_0 \label{alongboundary}  \\
=&-\frac{c}{\gamma}\, K^{\frac{1-\gamma}{2\gamma}}_p \hat{S}^{\frac{1}{\gamma}-1}\hat{S}_x\, p^{\frac{\gamma-1}{2\gamma}}_0.\nn
\end{align}
On the other hand, using  Lemma {\rm \ref{riclemma}}, we have
\begin{align}
\partial_-s=&\, s_t-cs_x=-c\xi-c\big(\xi+\frac{1}{\gamma}\hat{S}_xa\big)= -2c\xi -\frac{c}{\gamma}\hat{S}_xa \label{alongboundary1} \\
=& -2c\xi -\frac{1}{\gamma} K^{   \frac{1-\gamma}{2\gamma}}_p c \hat{S}_x p^{\frac{\gamma-1}{2\gamma}}\hat{S}^{\frac{1}{\gamma}-1},\nn
\end{align}
which, together with Lemma {\rm \ref{riclemma}} and  (\ref{alongboundary}), implies
\begin{equation}\label{relation}
\partial_{-}(p-p_0)=\frac{c^2}{2\gamma}K^{\frac{1-\gamma}{2\gamma}}_p \hat{S}^{\frac{1}{\gamma}-1} \hat{S}_x
\Big( p^{\frac{\gamma-1}{2\gamma}}-  p^{\frac{\gamma-1}{2\gamma}}_0\Big).
\end{equation}

From the Gronwall inequality and $p-p_0=0$ at $O=(T,0)$, we have
$$
(p, u)=(p_0, u_0)\qquad\,\, \text{on $L_1$},
$$
which, together with Lemma {\rm \ref{riclemma}},  implies \eqref{c4}.

\medskip
\textbf{3.} {\it Verification of \eqref{c3}}.  In the rest of the proof, we denote $C\geq 1$ as a generic constant depending only on
the boundary conditions on $L_1$ and $L_2$,  and $T$, which may be different at each occurrence.

First, we need   the following lower and upper bounds on $(c,a)$:
\begin{equation}\label{density-lower}
C^{-1}\leq a(t,x), c (t,x)\leq C \qquad\,\, \text{for $(t,x)\in A(T)$},
\end{equation}
if $T$ is sufficiently small.

In fact, from the local existence obtained  in Step 1, we know
\begin{equation}\label{localbound}
\|(s, r,\hat{S})\|_{C^1(A(T))}\leq C,
\end{equation}
which, along with the definitions of $(s,r)$ and
equation $(\ref{gusat problem})_3$,
implies
\begin{equation}\label{localbound-d}
\|(c\xi, c\zeta)\|_{L^\infty(A(T))}+
\|(a, c)\|_{C^1(A(T))} \leq  C
\end{equation}
for some constant $C>0$.
Furthermore, the boundary conditions in (\ref{boundary condition3}) on $L_1$ and $L_2$ imply
\begin{equation}\label{densityboundinl6}
a (O)\geq C^{-1}
\end{equation}
for $O=(T,0).$
On curve $L_2$,
it follows from (\ref{localbound-d}) that
\begin{equation}\label{densityboundinl6-1}
\|\partial_+a\|_{L^\infty(L^2)}\leq C,
\end{equation}
which, together with (\ref{densityboundinl6}), implies
\begin{equation}\label{densityboundinl6-2}
a (t,x_2(t))\geq C^{-1} \qquad \text{for $0\leq t \leq T$},
\end{equation}
if $T$ is sufficiently small.
Similarly, along the backward characteristic, we have
\begin{equation}\label{densityboundinl6-3}
a (t,x), \ c (t,x) \geq C^{-1} \qquad\,\, \text{for $(t,x)\in A(T)$}.
\end{equation}
Then we obtain the desired lower and upper bounds in \eqref{density-lower}.

Next, it follows from  equations (\ref{gusat problem})
and the boundary conditions (\ref{boundary condition3}) that
\begin{equation}\label{timeevolution}
\partial_+ \tilde{s}=0, \quad \partial_- \tilde{r}=0 \qquad\,\,\, \text{on $L_2$}.
\end{equation}
Then
\begin{equation}\label{alongboundary-r}
s(t,x^2(t))=s(O), \quad r_t -cr_x=0  \qquad\,\,\, \text{on $L_2$},
\end{equation}
which, together with Lemmas {\rm \ref{riclemma}} and {\rm \eqref{density-lower}}, and $\partial_+ r>0$ on $L_2$, implies
\begin{align}
\zeta=&\,\frac{r_t}{c}=\frac{1}{2c}(r_t+cr_x)=\frac{1}{2}\partial_x h_*(x)\label{relation_2} \\
\geq&\, \frac{1}{2}\min_{x\in [x^2(0),0]}\partial_x h_*(x)>  0 \qquad\,\, \text{on $L_2$}.\nn
\end{align}

Finally,  let $x(t; x_0,T^*)$ be one backward characteristic starting from $(0,x_0)$ and
connecting to line $L_2$ by $(T^*,x(T^*))$.
From (\ref{frem1}), we have
\begin{equation}\label{weightedestimates-2}
\zeta(\sigma,x(\sigma; x_0,T^*))=\zeta(T^*,x(T^*))
+\int_\sigma^{T^*} k_1\big(k_2
(\xi+3\zeta)-\xi\zeta+\zeta^2\big)\,\text{d}\sigma,
\end{equation}
which, together with \eqref{density-lower}, (\ref{localbound-d}), and \eqref{relation_2},
implies
\begin{equation}\label{postive}
\zeta>C^{-1} \qquad\,\, \text{in $A(T)$}
\end{equation}
for some time $T>0$ small enough.

\medskip
\textbf{4.} {\it Verification of \eqref{c5-a}}.
Using Condition \ref{construction}, (\ref{k def}), and equation $(\ref{gusat problem})_3$,
we have
\begin{equation}\label{k1k2}
\begin{split}
& k_1>0\,\,\,\,\,\text{in $A(T)$},\,\,\qquad\,\, \hat{S}_x, k_2 =0 \,\,\,\,\,\text{in $A_2 \cup \{(0,x^1(0))\}$},\\
&\hat{S}_x,k_2 >0 \,\,\,\,\,\text{in $A_1\setminus (\{x=0\}\cup \{(0,x^1(0))\})$}.
\end{split}
\end{equation}
From \eqref{c4} and \eqref{postive}, we have
$$
\xi=0 \,\,\,  \text{on $L_1$}, \qquad\, \zeta>C^{-1} \,\,\, \text{in $A(T)$}.
$$
Then
\begin{equation}\label{partialalpha}
\partial_+\xi=k_1k_2\zeta>0 \qquad\,\,\text{on $L_1\setminus \{(0,x^1(0)),O\}$}
\end{equation}
with $O=(T,0)$, where we have used (\ref{frem1}) and (\ref{k1k2}).

Furthermore, since $\zeta>0$ and $\hat{S}_x\geq0$ in $A(T)$,
by  (\ref{frem1}), $\xi$ does not change from negative to positive in the reverse time
direction along any forward characteristic in $A_1$.
Thus, we see that $\xi<0$ in the interior of $A_1$ and $\xi=0$ on $L_1$.
Recall that $\xi$ does not change sign in the isentropic region $A_2$.
Then
$$
\xi<0 \,\,\,\,\mbox{in $A(T)\setminus (L_1\cup L_2)$},\qquad\,\,
\xi=0 \,\,\,\,\mbox{on $L_1\cup L_2$}.
$$
This completes the proof.
\end{proof}

\begin{remark}\label{rem_RC}
Note that $\xi<0$  on both the isentropic and nonisentropic regions $A_2$ and $A_1${\rm ;}
that is, the forward waves on $A_2$ are of compression in the sense of Definition {\rm\ref{def1}}.
\end{remark}

\subsection{Nontrivial global continuous solution with a weak compression}

Another objective of this paper is to construct an example of  global continuous solutions
for the nonisentropic Euler equations when the initial data contain a weak compression.
We now prove the main theorem of this section; see Fig. \ref{example3}.

\begin{theorem}\label{mainexistence}
Let the initial data $(v_0, u_0, S_0)\in C(\mathbb{R})$ satisfy the following{\rm :}

\smallskip
\begin{itemize}
\item  In interval $[x^2(0), x^1(0)]$,  the vector function $(v_0, u_0, S_0)$
is described as a part of the $C^1$ solution of the  backward Goursat problem in  $A(T)$ of Theorem {\rm\ref{goursatlocal}}{\rm ;}

\smallskip
\item In intervals  $(-\infty, x^2(0)]$ and $[x^1(0),\infty)$,
\beq\label{c2}
\begin{split}
&(u_0,\rho_0,S_0)=(u,\rho,S)(0, x^2(0))\qquad\hbox{when $x\in(-\infty,x^2(0)]$},\\
&(u_0,\rho_0,S_0)=(u,\rho,S)(0, x^1(0))\qquad\hbox{when $x\in[x^1(0), \infty)$}.
\end{split}
\eeq

\end{itemize}
Then there exists a unique nonisentropic global continuous solution $(v, u, S)$
for the Cauchy problem \eqref{lagrangian1q}--\eqref{lagrangian3q} with \eqref{initial1}.
In addition, $(v, u, S)$ satisfies the following{\rm :}
\begin{itemize}

\item[\rm (i)]  $(v, u, S)$ is $C^1$, except on the forward characteristic passing through $(0,x^2(0))$,
  the backward characteristic passing through  $(0,x^1(0))$, and the characteristic
  $x=x^1(0)$  {\rm (}i.e., only Lipschitz continuous{\rm )}{\rm ;}

\smallskip

\item [\rm (ii)]  $S_x(t,x)>0$ for $(t,x)\in [0,\infty)\times (0,x^1(0)]$, and
  $S_x(t,x)=0$  in the region outside of $ [0,\infty)\times (0,x^1(0)]${\rm ;}

\smallskip
\item [\rm (iii)] There exists some finite $T>0$ such that the solution contains a compression
in some region $A_2\subset [0,\infty)\times [0,T]$
in the sense of  Definition {\rm\ref{def1}},
and also $\xi<0$ in some region $A_1\subset [0,\infty)\times [0,T]${\rm ;}

\smallskip
\item [\rm (iv)] $\xi\geq 0$ and $\zeta\geq 0$ in the region outside of $A(T)=A_1\cup A_2$ in the $(t,x)$--plane.
\end{itemize}
\end{theorem}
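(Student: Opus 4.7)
The plan is to paste the $C^1$ backward Goursat solution on $A(T)$ given by Theorem \ref{goursatlocal} together with two explicit pieces that cover the rest of $[0,\infty)\times\mathbb{R}$: a left isentropic piece---a constant state joined to a backward simple wave---lying to the left of $L_2$ for $t\le T$ and to the left of the backward-characteristic extension $L_1^{\mathrm{ext}}$ of $L_1$ through the vertex $O=(T,0)$ for $t>T$; and a Lagrangian-stationary piece on the right. The initial data $(v_0,u_0,S_0)$ is then forced by the Goursat trace on $[x^2(0),x^1(0)]$ at $t=0$ together with the constant profiles prescribed by \eqref{c2}, with continuity at the joining points $x^2(0)$ and $x^1(0)$ automatic from the construction.

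For the left isentropic piece, Condition \ref{construction} gives $\hat S_0$ constant on $(-\infty,0]$, so equations \eqref{fulleuler1} reduce to the $p$-system there. The identity in \eqref{alongboundary-r} forces $s\equiv s(O)$ along $L_2$; combined with the constant left initial data in which $s=s(O)$, this forces $s\equiv s(O)$ throughout the left piece, and the only nontrivial structure is a backward simple wave transporting the monotone $r$-profile governed by $h_*$ of Condition \ref{construction}. Along $L_1^{\mathrm{ext}}$, which lies in $x<0$ for $t>T$ where $\hat S_x\equiv 0$, the equation $\partial_- r=0$ gives $r\equiv r(O)$ on $L_1^{\mathrm{ext}}$. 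Global persistence of the left piece then follows from Theorem \ref{p_sing_thm} since $s_x\equiv 0$ and $r_x\ge 0$ by construction. For the right piece, I would set
\[
u(t,x)\equiv u_0,\quad p(t,x)\equiv p_0,\quad S(t,x)=S_0(x),\quad v(t,x)=\big(K\hat S_0^2(x)/p_0\big)^{1/\gamma},
\]
which solves \eqref{lagrangian1q}--\eqref{lagrangian3q} stationarily, matches $(u,p)=(u_0,p_0)$ on $L_1\cup L_1^{\mathrm{ext}}$ by \eqref{relation}, and matches the constant initial data on $[x^1(0),\infty)$.

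Continuity of $(v,u,S)$ across $L_2$, $L_1\cup L_1^{\mathrm{ext}}$, and the particle path $x=x^1(0)$ is built in to the matching; derivative jumps occur on these three curves because on one side the state is constant, stationary, or has $\hat S_0'\equiv 0$, while on the other it varies. Properties (ii)--(iv) then follow directly: (ii) from $S(t,x)=S_0(x)$ together with $\partial_x\tilde f^1>0$ on $(0,x^1(0)]$ in Condition \ref{construction}; (iii) from \eqref{c5-a}, which on $A_2$ with $\hat S_x\equiv 0$ reads $s_x<0$, i.e., forward compression in the sense of Definition \ref{def1}; and (iv) by direct computation, noting $s_t=0$ on the left gives $\xi=0$ and $\zeta=r_x\ge 0$ in the simple wave, while $u_t\equiv p_t\equiv 0$ on the right gives $\xi=\zeta=0$ by Lemma \ref{riclemma}. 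The main obstacle is matching the left and right extensions across $L_1^{\mathrm{ext}}$ for $t>T$: one must verify that the simple-wave value of $r$ approaches $r(O)$ at $L_1^{\mathrm{ext}}$ from the left, which is guaranteed by $r(O)=h_*(0)$ together with the fact that $L_1^{\mathrm{ext}}$ is the terminal backward characteristic through $O$. All other analytical input beyond Theorem \ref{goursatlocal} is supplied by the isentropic global existence theorem, Theorem \ref{p_sing_thm}.
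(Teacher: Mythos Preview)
Your proposal is correct and follows essentially the same route as the paper: build the solution in $A(T)$ by Theorem~\ref{goursatlocal}, attach an isentropic backward rarefaction simple wave on the left (with $s\equiv s(O)$ forced by \eqref{alongboundary-r} and the constant extension, so only the monotone $r$-profile propagates), and attach the Lagrangian-stationary piece $(u,p)\equiv(u_0,p_0)$ on the right using \eqref{c4}. The paper's only substantive difference is that it invokes the classical $C^1$ existence theory for characteristic initial--boundary value problems (Li \cite{Lidaqian}) for the left piece rather than Theorem~\ref{p_sing_thm}, since the left region is bounded by the characteristic $L_2$ and is not literally a Cauchy problem; however, your reduction to a backward simple wave with $s_x\equiv 0$ and $r_x\ge 0$ makes the global existence elementary, so this is a cosmetic rather than a mathematical gap.
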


\begin{proof}
For the region out of $A(T)$,
there is an initial-boundary value problem.
We now set the initial and boundary conditions.

\begin{condition}\label{ass_last}
The two boundaries $L_1$ and $L_2$, and the boundary data on $L^1$ and $L^2$ are
prescribed as in Theorem {\rm\ref{goursatlocal}}. On $x\in(-\infty,x^2(0)]\cup[x^1(0), \infty)$, the initial data satisfy \eqref{c2}.
\end{condition}

Now we prove the global existence on $\{(t,x)\in \mathbb{R}^+\times\mathbb{R}^-\}\setminus A_2$.
First, in this region, the entropy is a constant,
by Conditions \ref{construction} and \ref{ass_last},
and the fact that the entropy is always stationary by using
\eqref{s con} and the boundary condition $\eqref{boundary condition2*}_2$ and $\eqref{smoothboundaryvalue}_3$.
By \eqref{c3}, there is a backward rarefaction on $\{(t,x)\in \mathbb{R}^+\times\mathbb{R}^-\}\setminus A_2$.
Thus, in this region, there exists a unique global isentropic continuous solution
by the $C^1$ global existence theory in \cite{Lidaqian}.
More precisely, the classical theory works for the region below the backward characteristic $x=x^1(t),\ t\in[T, \infty]$,
since there exists a $C^1$ bound on the rarefaction wave.
Between $x=x^1(t)$ for $t\in[T, \infty]$ and the positive $t$ axis,
the unknowns take constant values following the initial data at point $(T,0)$.
Across the characteristics passing through $(0,x^2(0))$ and $(0,x^1(0))$,
the solution is Lipschitz continuous but not smooth.

Then we show the global existence on $\{(t,x)\in \mathbb{R}^+\times\mathbb{R}^+\}\setminus A_1$.
Using \eqref{forward} and \eqref{c4}, we see that, on $L_1$ for $t\in[0,T]$,
\[
u(t,x)=u(0,x^1(0)),\quad p(t,x)=p(0,x^1(0)).
\]
Furthermore, by \eqref{c2},
\[
u(0,x)=u(0,x^1(0)),\quad p(0,x)=p(0,x^1(0))\qquad\,\,\, \hbox{for $x\in [x^1(0),\infty)$}.
\]
Then
\[
u(t,x)=u(0,x^1(0)),\quad p(t,x)=p(0,x^1(0))\qquad\,\, \hbox{for any $(t,x)\in \mathbb{R}^+\times\mathbb{R}^+\setminus A_1$},
\]
together with the stationary entropy profile prescribed in Condition \ref{ass_last},
give the unique piecewise smooth solution in $\mathbb{R}^+\times\mathbb{R}^+\setminus A_1$.
In fact, this can be seen by Lemma {\rm \ref{riclemma}} and the fact that $\xi=\zeta=0$
is an equilibrium for the equations in Lemma \ref{riclemma}.

Combining these with Theorem \ref{goursatlocal}, we conclude the proof of Theorem \ref{mainexistence}.
\end{proof}

\begin{remark}\label{rem_ns}
Between the characteristics $L_1$ and $L_2$ where the nonsmoothness occurs,
there is a backward rarefaction wave. Beyond region $A(T)$, this wave is particularly an isentropic rarefaction simple wave.
Note that any centered rarefaction simple wave, solved from the Riemann problems,
also has two nonsmooth boundaries \cite{smoller}.
Therefore, the nonsmooth boundaries in our example are quite natural
in the solutions of hyperbolic conservation laws.
\end{remark}

The global continuous solution obtained in Theorem \ref{mainexistence} shows a new phenomenon,
different from the isentropic case, that
the initial weak compression  (in some isentropic region)  might be cancelled during the wave
interaction (after crossing some nonisentropic region) owing   to the effect of the variation
of the entropy.
See Remark \ref{rem_ns} also for an explanation on the nature of such a weak discontinuity
shown in (i) of the above theorem.
We conjecture that the weak discontinuity could be removed by some further development of
our construction method.

To our knowledge, this is the first example of  global continuous solutions with large initial data that contain a compression
in the sense of Definition \ref{def1}.
In fact, according to Definition \ref{def1} of compression/rarefaction characters for isentropic flow and the ones  in  \cite{G3, G5, G6}
for the nonisentropic flow, the example in Theorem \ref{mainexistence} includes both isentropic
and nonisentropic compression waves (Remark \ref{rem_RC}),
and  the example in \cite{linliuyang} includes only rarefaction waves ($\xi\geq 0$ and $\zeta \geq 0$).
In addition, we provide a new method for constructing concrete nonisentropic
continuous solutions with large initial data that contain weak compressions.

\section{De-rarefaction Phenomenon and Shock Formation for the Nonisentropic Euler Equations \label{sec_derare}}

In this section,  we present a new phenomenon of de-rarefaction for the
nonisentropic Euler flows by constructing one  continuous solution with nonisentropic rarefactive initial data
which forms a shock wave in a finite time.
More precisely, the initial data include a pair of forward and backward isentropic rarefaction simple waves
and a locally stationary solution with varying entropy.

There is no ambiguity on the definition of isentropic rarefaction simple waves,
which mean, in the isentropic domain, one of the Riemann invariants
is constant while the other increases on $x$.
Notice that
the stationary solution with varying entropy
includes no rarefaction and compression phenomena.
In fact, such a solution can stay stationary forever if it is not perturbed by other waves.

\begin{figure}[htp] \centering
\includegraphics[width=.42\textwidth]{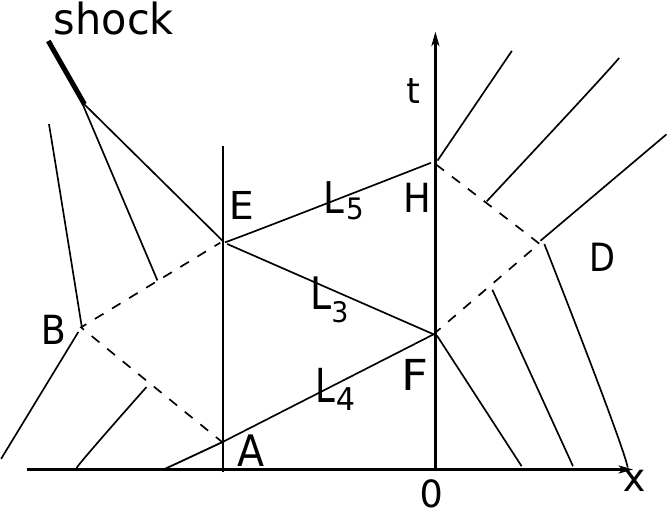}
\caption{De-rarefaction solutions. The entropy is constant to the left of $AE$ and to the right of $HF$, respectively.
The entropy is increasing in $x$ between  $AE$ and $HF$. Lines $L_i$, $i=3,4,5$, are the boundary characteristics
of the Goursat problems in FEA and EFH.
\label{Fig:example4}}
\end{figure}

The rest of this section is organized as follows:
\S 5.1 is devoted to  solving the left oriented Goursat problem in FEA,
and \S 5.2  is devoted to solving the right oriented Goursat problem in EFH, as shown in Fig. \ref{Fig:example4}.
Based on the existence theories of these two Goursat
problems, in \S 5.3, we succeed in constructing the first continuous solution
with nonisentropic rarefactive initial data of a Cauchy problem, so that the solution forms a shock wave in finite time.

\subsection{Left oriented Goursat problem in FEA}	
Denote
$$
F=(T,0), \quad  E=(T_E, X_E)=(T_E, X_A),\quad A=(T_A,X_A),
$$
where $T, T_E, T_A>0$.

We consider the left oriented Goursat problem  of system
 \eqref{gusat problem} in the angular domain $FEA$,
where the backward and forward characteristic
boundaries $L_3$ and $L_4$ are defined as
\begin{equation}\label{free boundary-1}
\begin{split}
L_3 (EF) &:=\Big\{x=x^3(t),\ t\in[T,T_E]\,:\, \frac{\dd x^3(t)}{\dd t}=-c(t,x^3(t)),\ x^3(T)=0\Big\},\\[3pt]
L_4 (AF) &:=\Big\{x=x^4(t), \ t\in[T_A,T]\, :\, \frac{\dd x^4(t)}{\dd t}=c(t,x^4(t)),\ x^4(T)=0\Big\}.
\end{split}
\end{equation}

The boundary conditions on $L_3$ and $L_4$ are given as
\begin{equation}\label{boundary condition-1}
\begin{split}
L_3 &:\ \tilde{s}(t,x^3(t))=h^3(t),\,\,\, \hat{S}(t,x^3(t))=f^3(t) \qquad\, \mbox{for $T\leq t \leq T_E$},\\[0.5mm]
L_4 &:\  \tilde{r}(t,x^4(t))=h^4(t),\,\,\, \hat{S}(t,x^4(t))=f^3(t)\qquad\, \mbox{for $0\leq t \leq T$},
\end{split}
\end{equation}
with
\beq\label{c5-1}
c(T,0)
=K_c \Big(\frac{h^3(T)-h^4(T)}{2}\Big)^{\frac{\gamma+1}{\gamma-1}}\big(f^3(T)\big)^{\frac{1-3\gamma}{2\gamma(\gamma-1)}}>0.
\eeq
Then, for sufficiently small $\epsilon>0$,
$x^3(t)$ (resp., $x^4(t)$) in $[T-\epsilon,T+\epsilon]$  is a strictly decreasing (resp., increasing) function with respect to $t$.
It is easy to see that the two free boundaries $x^3(t)$ and $x^4(t)$ can be rewritten as
\begin{equation}\label{free boundary*-1}
\begin{split}
L_3&:\  \frac{\dd \Omega^3(x)}{\dd x}=-\frac{1}{c(\Omega^3(x),x)},\quad \Omega^3(X_F)=T,\\[1mm]
L_4&:\  \frac{\dd \Omega^4(x)}{\dd x}=\frac{1}{c(\Omega^4(x),x)},\quad \ \ \Omega^4(X_F)=T.
\end{split}
\end{equation}
Thus,  the  boundary conditions (\ref{boundary condition-1}) can be given as
\begin{equation}\label{boundary condition2*-1}
\begin{split}
L_3&:\, \tilde{s}(\Omega^3(x),x)=\tilde{h}^3(x),\,\, \hat{S}(\Omega^3(x),x)=\tilde{f}^3(x) \qquad\, \mbox{for $x\in[X_A,0]$},\\[1mm]
L_4&: \,  \tilde{r}(\Omega^4(x),x)=\tilde{h}^4(x),\,\, \hat{S}(\Omega^4(x),x)=\tilde{f}^3(x)\qquad\, \mbox{for $x\in[X_A,0]$},
\end{split}
\end{equation}
which satisfy
the following condition:

\begin{condition}\label{construction-1}
$\tilde{f}^3(x)$, $\tilde{h}^3(x)$, and $\tilde{h}^4(x)$ satisfy the following properties{\rm :}

\smallskip
\begin{enumerate}
\item[\rm (i)] $\tilde{f}^3(x)$ satisfies that, for $x\in [X_A,0]$,
\begin{equation}\label{smoothboundaryvalue-1}
 \tilde{f}^3(x)\in  \ C^1([X_A,0]), \quad C^{-1} \leq  \tilde{f}^3(x) \leq C,\quad    \partial_x\tilde{f}^3(x)>0,
\end{equation}
where $C$ is  some positive constant{\rm ;}

\smallskip
\item[\rm (ii)]
$\tilde{h}^3(x)$ and $\tilde{h}^4(x)$ satisfy that, for $x\in [X_A,0]$,
\begin{equation}\label{boundary condition3-1}
\begin{cases}
\displaystyle
 \tilde{h}^3(x)=\big(\tilde{f}^3(x)\big)^{-\frac{1}{2\gamma}}h^*(x),\\[0.5mm]
\displaystyle
 \tilde{h}^4(x)=\big(\tilde{f}^3(x)\big)^{-\frac{1}{2\gamma}}\Big(u_0-K^{\frac{1-\gamma}{2\gamma}}_p
 \big(\tilde{f}^3(x)\big)^{\frac{1}{\gamma}}p^{\frac{\gamma-1}{2\gamma}}_0\Big),
\end{cases}
\end{equation}
where $u_0$ and $p_0>0$ are both constants, and $h^*(x)$ satisfies
\begin{equation}\label{increasing entropy-1}
\begin{split}
&h^*(x)\in C^1([X_A,0]),\quad h^*(0)
  =u_0+K^{\frac{1-\gamma}{2\gamma}}_p\big(\tilde{f}^3(0)\big)^{\frac{1}{\gamma}}p^{\frac{\gamma-1}{2\gamma}}_0,\\
&\min_{x\in [X_A,0]}\partial_x h^*(x)-\frac{2}{\gamma}
K^{\frac{1-\gamma}{2\gamma}}_p\big(\tilde{f}^3(0)\big)^{\frac{1}{\gamma}}p^{\frac{\gamma-1}{2\gamma}}_0\max_{x\in[X_A,0]} \partial_x\tilde{f}^3(x) >0.
 \end{split}
\end{equation}
\end{enumerate}
\end{condition}
Then we have the following local-in-time existence for the above Goursat problem:

\begin{theorem}\label{goursatlocal-1}
Let Condition {\rm\ref{construction-1}} hold.
Then there exist a constant $x_*<0$  and a unique $C^1$ solution $(\tilde{s}, \tilde{r},\hat{S})$
of the left oriented Goursat problem \eqref{gusat problem} with \eqref{free boundary*-1}--\eqref{boundary condition2*-1},
or equivalently \eqref{gusat problem} with \eqref{free boundary-1}--\eqref{boundary condition-1}, in $Q(x_*)$ defined by
\beq\label{QQ}
Q(x_*)=\big\{(t,x)\,:\, \Omega^4(x)\leq t\leq \Omega^3(x), \,  x \in [x_*,0]\big\}.
\eeq
Furthermore,
\begin{align}
&\zeta=0\qquad \hbox{on}\ L_4,  \label{c4-1}\\
&\xi>0\qquad \hbox{in}\ Q(x_*), \label{c3-1}\\
&\zeta<0 \qquad \text{in $Q(x_*)\setminus L_4$}.\label{c5-1-a}
\end{align}
Moreover, the length of the life span $|x_*|$ depends only on the $C^1$--norm
of  $\tilde{f}^3(x)$, $\tilde{h}^3(x)$, and $\tilde{h}^4(x)$.
\end{theorem}

\begin{proof}
We divide the proof into four steps.

\medskip
\textbf{1}. {\it Local existence of the left oriented Goursat problem in FEA}.
In order to solve the left oriented Goursat problem (\ref{gusat problem}) with \eqref{free boundary-1}--(\ref{boundary condition-1}),
we just need to reformulate it as a normal  Goursat problem under some coordinate transformations.

First, we  introduce the following new coordinates:
\begin{equation}\label{normalchange-1}
(\tau, y)=(t-T, -x).
\end{equation}
Denote
\begin{equation}\label{rsnewform1-1}
\tilde{s}_*(\tau,y)=\tilde{s}(\tau+T,-y),\quad \tilde{r}_*(\tau,y)=\tilde{r}(\tau+T,-y), \quad \hat{S}_*(y)=\hat{S}(-y).
\end{equation}
Then equations (\ref{gusat problem}) can be rewritten as
\begin{equation}\label{gusat problem-refor-1}
\begin{cases}
(\tilde{s}_*)_y-\frac{1}{\tilde{c}}(\tilde{s}_*)_\tau=-\frac{1}{2\gamma} \frac{(\hat{S}_*)_y}{\hat{S}_*} \tilde{r}_*,\\[1mm]
(\tilde{r}_*)_y+\frac{1}{\tilde{c}}(\tilde{r}_*)_\tau=-\frac{1}{2\gamma} \frac{(\hat{S}_*)_y}{\hat{S}_*} \tilde{s}_*,\\[1mm]
(\hat{S}_*)_\tau=0,
\end{cases}
\end{equation}
where
\begin{equation}\label{free boundary-refor-1}
\tilde{c}(\tau,y)=c(\tau+T,-y,\tilde{s}_*, \tilde{r}_*)
=K_c \big(\frac{\tilde{s}_*-\tilde{r}_*}{2}\big)^{\frac{\gamma+1}{\gamma-1}}(\hat{S}_*)^{\frac{1-3\gamma}{2\gamma(\gamma-1)}}.
\end{equation}

The vertex for the reformulated Goursat problem is still $F$,
and the two free boundaries can be given by
\begin{equation}\label{free boundary-refor_2-1}
\begin{split}
\hat{L}_3&:\  \frac{\dd \tau^3(y)}{\dd y}=\frac{1}{\tilde{c}(\tau^3(y),y)},\,\,\, \tau^3(0)=0,\\[1mm]
\hat{L}_4&:\  \frac{\dd \tau^4(y)}{\dd y}=-\frac{1}{\tilde{c}(\tau^4(y),y)},\,\,\, \tau^4(0)=0.
\end{split}
\end{equation}
Thus, the time-space domain under consideration is
$$
\tilde{A}(-X_A)=\big\{(\tau,y)\,:\, \tau^4(y)\leq \tau \leq \tau^3(y),\, y \in [0, -X_A]\big\},
$$
and  the boundary conditions are
\begin{equation}\label{boundary condition-refor-1}
\begin{split}
\hat{L}_3&:\ \tilde{s}_*(\tau^3(y),y)=n^3(y),\,\,\, \hat{S}_*(\tau^3(y),y)=l^3(y) \qquad\, \text{for $0\leq y \leq -X_A$},\\[1mm]
\hat{L}_4&:\  \tilde{r}_*(\tau^4(y),y)=n^4(y),\,\,\, \hat{S}_*(\tau^4(y),y)=l^3(y)\qquad\, \text{for $0\leq y \leq -X_A$},
\end{split}
\end{equation}
where $n^3(y)=\tilde{h}^3(-y)$, $n^4(y)=\tilde{h}^4(-y)$,
and $l^3(y)=\tilde{f}^3(-y)$  are all $C^1$ functions in $[0, -X_A]$.

Then,  the desired local well-posedness of the boundary value problem (\ref{gusat problem-refor-1})--(\ref{boundary condition-refor-1})
in $\tilde{A}(-X_A)$ for some $T>0$ can be obtained by the similar argument (such as transformation \eqref{fixboundary})
used in the proof  of Theorem \ref{goursatlocal} and the conclusion of Theorem \ref{thA},
which implies the desired local well-posedness in  Theorem \ref{goursatlocal-1}.

\medskip
\textbf{2}. {\it Verification of \eqref{c4-1}}.
First, from (\ref{rsnewform}), $(\ref{gusat problem})_3$, and  (\ref{boundary condition3-1}), we have
$$
r=u_0-K^{\frac{1-\gamma}{2\gamma}}_p\big(\tilde{f}^3(x)\big)^{\frac{1}{\gamma}}p^{\frac{\gamma-1}{2\gamma}}_0
\qquad\, \text{on $L_4$}.
$$
Then it follows from a direct calculation that, on $L_4$,
\begin{align}
\partial_+r=&\, r_t+cr_x=c \frac{\dd r}{\dd x}
=-\frac{c}{\gamma}\, K^{\frac{1-\gamma}{2\gamma}}_p\big(\tilde{f}^3(x)\big)^{\frac{1}{\gamma}-1}\partial_x \tilde{f}^3(x)\, p^{\frac{\gamma-1}{2\gamma}}_0\label{alongboundary-1}\\
=&-\frac{c}{\gamma}\, K^{\frac{1-\gamma}{2\gamma}}_p \hat{S}^{\frac{1}{\gamma}-1}\hat{S}_x\, p^{\frac{\gamma-1}{2\gamma}}_0.\nn
\end{align}

Furthermore, using  Lemma {\rm \ref{riclemma}}, we have
\begin{align}
\partial_+r=&\,r_t+cr_x=c\zeta+c(\zeta-\frac{1}{\gamma}\hat{S}_xa)
= 2c\zeta -\frac{c}{\gamma}\hat{S}_xa\label{alongboundary1-1} \\
=&\, 2c\zeta-\frac{1}{\gamma} K^{\frac{1-\gamma}{2\gamma}}_p c \hat{S}_x p^{\frac{\gamma-1}{2\gamma}}\hat{S}^{\frac{1}{\gamma}-1},
\nn
\end{align}
which, together with Lemma {\rm \ref{riclemma}} and  (\ref{alongboundary-1}), implies
\begin{equation}\label{relation-a}
\partial_{+}(p-p_0)=-\frac{c^2}{2\gamma}K^{\frac{1-\gamma}{2\gamma}}_p \hat{S}^{\frac{1}{\gamma}-1} \hat{S}_x
\big( p^{\frac{\gamma-1}{2\gamma}}-  p^{\frac{\gamma-1}{2\gamma}}_0).
\end{equation}

From Gronwall's inequality and $p-p_0=0$ at $F=(T,0)$, we have
$$
(p,u)=(p_0, u_0)\qquad\,\, \text{on $L_4$},
$$
which, together with Lemma {\rm \ref{riclemma}},  implies \eqref{c4-1}.

\medskip
\textbf{3}. {\it Verification of \eqref{c3-1}}.
In the rest of this proof, we denote $C\geq 1$ as a generic constant depending only on
$x_*$ and the boundary conditions on $L_3$ and $L_4$,
which may be different at each occurrence.

Similarly to the proof of \eqref{density-lower}, we can first obtain
\begin{equation}\label{density-lower-1}
C^{-1}\leq a(t,x), c (t,x)\leq C \qquad\,\,
\text{for $(t,x)\in Q(x_*)$}
\end{equation}
if $|x_*|$ is sufficiently small.

Next, using relation \eqref{tau p c}, we have
\begin{equation}\label{relation_2-1cp}
c=K^{-\frac{\gamma+1}{2\gamma}}_pK_c p^{\frac{\gamma+1}{2\gamma}}\hat{S}^{-\frac{1}{\gamma}},
\qquad a=K^{\frac{1-\gamma}{2\gamma}}_p  p^{\frac{\gamma-1}{2\gamma}}\hat{S}^{\frac{1-\gamma}{\gamma}}.
\end{equation}
Then we use  Lemmas {\rm \ref{riclemma}} and  \eqref{density-lower-1}--\eqref{relation_2-1cp} to obtain
\begin{align}
\xi=&\,-\frac{s_t}{c}=-\frac{1}{2c}(s_t-cs_x+\frac{ac}{\gamma}\hat{S}_x)
 =\frac{1}{2}\frac{\dd s}{\dd x}|_{L_3}-\frac{a}{2\gamma}\hat{S}_x\label{relation_2-1}  \\
=&\, \frac{1}{2}\Big(\partial_x h^*(x)-\frac{1}{\gamma}  K^{\frac{1-\gamma}{2\gamma}}_p
  p^{\frac{\gamma-1}{2\gamma}}\hat{S}^{\frac{1-\gamma}{\gamma}} \partial_x\tilde{f}^3(x)\Big) \qquad\,\, \text{on $L_3$}.
  \nn
\end{align}
Using the boundary assumption $\eqref{increasing entropy-1}_2$, we have
\begin{equation}\label{aupperbound}
a(F)<\frac{\gamma \min_{x\in [X_A,0]}\partial_x h^*(x)}{2\max_{x\in[X_A,0]} \partial_x\tilde{f}^3(x)}=\frac{\Lambda}{2}.
\end{equation}
Then, along $L_3$,
\begin{equation}\label{aupperbound-a}
a(y)\leq a(F) +\|a\|_{C^1(Q(x_*))}|y|\leq \frac{\Lambda}{2}+C|y|\leq \frac{3}{4}\Lambda
\end{equation}
if $y\in [x_*,0)$, and $|y|$ is small enough.

Thus,  using \eqref{density-lower-1}, \eqref{relation_2-1}, and \eqref{aupperbound-a}, we have
\begin{equation}\label{relation_2-1upperbound-1}
C\geq \xi\geq  \frac{1}{8}\min_{x\in [y,0]}\partial_x h^*(x)=C^{-1}\qquad \text{on $L_3$}.
\end{equation}

Finally, similarly to the proof of \eqref{postive}, we can also obtain
\begin{equation}\label{inside1-1}
C^{-1}\leq  \xi\leq C  \qquad \text{in $Q(x_*)$}
\end{equation}
if $|x_*|$ is sufficiently small.

\medskip
\textbf{4}. {\it Verification of  \eqref{c5-1-a}}.
Using Condition \ref{construction-1}, (\ref{k def}), and equation $(\ref{gusat problem})_3$,
we have
\begin{equation}\label{k1k2-1}
k_1>0\,\,\, \text{in $Q(x_*)$}, \,\qquad\,  \hat{S}_x,\,k_2 >0 \,\,\, \text{in $Q(x_*)\setminus (\{x=x_*\}\cup \{(T,0)\})$}.
\end{equation}
From \eqref{c4-1} and \eqref{inside1-1}, we have
$$
\zeta=0 \,\,\,\text{on $L_4$}, \,\qquad\,  \xi>C^{-1}\,\,\,\text{in $Q(x_*)$}.
$$
Then
\begin{equation}\label{partialalpha-1}
\partial_-\zeta=-k_1k_2\xi-k_1\xi^2<0 \qquad\text{on $L_4\setminus \{A,F\}$},
\end{equation}
where we have used (\ref{frem1}) and (\ref{k1k2-1}).

Furthermore, since $\xi>0$ and $\hat{S}_x\geq0$ in $Q(x_*)$,
by  (\ref{frem1}), $\zeta$ does not change from negative to positive in the  time
direction along any backward characteristic in $Q(x_*)$.
Thus, we conclude
$$
\zeta<0 \qquad \mbox{in $Q(x_*) \setminus L_4$}.
$$
\end{proof}

\begin{remark}
For verifying \eqref{c4-1}--\eqref{c5-1-a}  conveniently,
we have given the boundary conditions in Condition {\rm\ref{construction-1}}
through the  independent variable $x$, instead of $t$.
\end{remark}

\subsection{Right oriented Goursat problem in EFH}
Denote $H=(T_H,0)$, and  redefine  points $E$ and $A$ as
$$
E=(T_E, X_E)=(T_E, x_*),\quad\, A=(T_A, X_A)=(T_A, x_*).
$$
Next, we consider the right oriented Goursat problem of system \eqref{gusat problem}
in the angular domain $EFH$, where
the backward and forward characteristic
boundaries $L_3$ and $L_5$ are defined by $\eqref{free boundary-1}_1$ and
\begin{equation}\label{free boundary-2}
L_5 (EH):=\Big\{x=x^5(t),\, t\in[T_A,T]\, :\, \frac{\dd x^5(t)}{\dd t}=c(t,x^5(t)),\,x^5(T_E)=x_*\Big\}.
\end{equation}

The boundary conditions on $L_3$ and $L_5$ are given by $\eqref{boundary condition-1}_1$ and
\begin{equation}\label{boundary condition-2}
L_5 :\,  \tilde{r}(t,x^5(t))=h^5(t),\,\,\, \hat{S}(t,x^5(t))=f^3(t)\qquad\, \mbox{for $T_E\leq t \leq T_H$}
\end{equation}
with
\beq\label{c5-2}
 c(T_E,x_*)
=K_c \Big(\frac{h^3(T_E)-h^5(T_E)}{2}\Big)^{\frac{\gamma+1}{\gamma-1}}\big(f^3(T_E)\big)^{\frac{1-3\gamma}{2\gamma(\gamma-1)}}>0.
\eeq
Then, for sufficiently small $\epsilon>0$,
$x^3(t)$ (resp., $x^5(t)$) in $[T_E-\epsilon,T_E+\epsilon]$  is a strictly decreasing (resp., increasing) function with respect to $t$.
It is easy to see that the two free boundaries $x^3(t)$ and $x^5(t)$ can be rewritten as $\eqref{free boundary*-1}_1$ and
\begin{equation}\label{free boundary*-2}
L_5:\,  \frac{\dd \Omega^5(x)}{\dd x}=\frac{1}{c(\Omega^5(x),x)},\,\,\, \Omega^5(x_*)=T_E.
\end{equation}
Thus,  the  boundary conditions (\ref{boundary condition-2}) can be given by $\eqref{boundary condition2*-1}_1$ and
\begin{equation}\label{boundary condition2*-2}
L_5: \,  \tilde{r}(\Omega^5(x),x)=\tilde{h}^5(x),\,\,\, \hat{S}(\Omega^5(x),x)=\tilde{f}^3(x)\qquad\, \mbox{for $x\in[x_*,0]$},
\end{equation}
which satisfy the following condition:

\begin{condition}\label{construction-2}
$\tilde{f}^3(x)$, $\tilde{h}^3(x)$, and $\tilde{h}^5(x)$ satisfy the following properties{\rm :}
\begin{enumerate}
\item[\rm (i)] $\tilde{f}^3(x)$ satisfies the conditions in \eqref{smoothboundaryvalue-1}  for $x\in [x_*,0]$,

\smallskip
\item[\rm (ii)]
$\tilde{h}^3(x)$ and $\tilde{h}^3(x)$ satisfy
\begin{equation}\label{boundary condition3-2}
\begin{cases}
 \displaystyle
 \tilde{h}^3(x)=\big(\tilde{f}^3(x)\big)^{-\frac{1}{2\gamma}}h^{*}(x),\\[1mm]
\displaystyle
 \tilde{h}^5(x)=\big(\tilde{f}^3(x)\big)^{-\frac{1}{2\gamma}}
  \Big(u^*_0+K^{\frac{1-\gamma}{2\gamma}}_p\big(\tilde{f}^3(x)\big)^{\frac{1}{\gamma}}(p^*_0)^{\frac{\gamma-1}{2\gamma}}\Big)
\end{cases}
\end{equation}
for $x\in [x_*,0]$, where $u^*_0=u(E)$ and $p^*_0=p(E)>0$, and $h^{*}(x)$ satisfies \eqref{increasing entropy-1} with $X_A$ replaced by $x_*$.
\end{enumerate}
\end{condition}

Then we have the following local-in-time existence for the  right oriented Goursat problem:

\begin{theorem}\label{goursatlocal-2}
Let Condition {\rm\ref{construction-2}} hold.
Then there exist a constant $X_*>x_*$  and a unique $C^1$ solution $(\tilde{s}, \tilde{r},\hat{S})$
of the   right oriented Goursat problem \eqref{gusat problem} with \eqref{free boundary*-2}--\eqref{boundary condition2*-2},
or equivalently \eqref{gusat problem} with \eqref{free boundary-2}--\eqref{boundary condition-2}, in $\tilde{Q}(X_*)$ defined by
\beq\label{QQ2}
\tilde{Q}(X_*)=\big\{(t,x)\,:\, \Omega^3(x)\leq t\leq \Omega^5(x), \, x \in [x_*,X_*]\big\}.
\eeq
Furthermore,
\begin{align}
&\zeta=0\qquad \hbox{on}\ L_5, \label{c4-2}\\
&\xi>0\qquad \hbox{on}\ \tilde{Q}(X_*), \label{c3-2}\\
&\zeta>0 \qquad \text{in $\tilde{Q}(X_*)\setminus L_5$}.\label{c5-2-a}
\end{align}
Moreover, the length of the life span $|X_*-x_*|$ depends only on the $C^1$--norm of $\tilde{f}^3(x)$,
$\tilde{h}^3(x)$, and $\tilde{h}^5(x)$.
\end{theorem}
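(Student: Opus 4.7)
The proof will mirror the four-step structure of Theorem \ref{goursatlocal-1}'s proof, adapted to the right-opening wedge $\tilde Q(X_*)$ whose vertex is $E=(T_E,x_*)$ and whose boundaries are $L_3$ (backward characteristic, lower-time side) and $L_5$ (forward characteristic, upper-time side). For Step 1, I first translate the vertex to the origin via $(\tau,y)=(t-T_E,\,x-x_*)$, reformulate \eqref{gusat problem} accordingly, and then straighten the two free characteristic boundaries into fixed rays by a transformation in the spirit of \eqref{fixboundary}. The resulting fixed-boundary problem has $C^1$ coefficients, $C^1$ boundary data (by Condition \ref{construction-2}), and a vanishing characterizing matrix at the vertex, so Theorem \ref{thA} in the appendix yields a unique $C^1$ solution on a wedge whose size $|X_*-x_*|$ is controlled by the $C^1$ norms of $\tilde f^3$ and $\tilde h^i$.

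For Step 2 ($\zeta=0$ on $L_5$), I will imitate the vertex--Gronwall argument used for $L_4$ in Theorem \ref{goursatlocal-1}: combining the boundary form of $r|_{L_5}$ in \eqref{boundary condition3-2} with the identity $r=u-K_p^{(1-\gamma)/(2\gamma)}\hat S^{1/\gamma}p^{(\gamma-1)/(2\gamma)}$, and computing $\partial_+ r$ along $L_5$ in two ways (directly as $c\,dr/dx$ along the characteristic, and via $\partial_+ r=2c\zeta-(c/\gamma)\hat S_x a$ obtained from Lemma \ref{riclemma}), I derive a linear ODE for $p-p_0^*$ along $L_5$ of the form $\partial_+(p-p_0^*) = \Psi(x)\bigl(p^{(\gamma-1)/(2\gamma)}-(p_0^*)^{(\gamma-1)/(2\gamma)}\bigr)$ with bounded $\Psi$. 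Vertex compatibility $(u,p)(E)=(u_0^*,p_0^*)$, inherited from the left Goursat problem, supplies the zero initial condition, so Gronwall forces $p\equiv p_0^*$ and $u\equiv u_0^*$ on $L_5$, whence $\zeta=-\partial_+ p/c^2=0$ by Lemma \ref{riclemma}. For Step 3 ($\xi>0$ in $\tilde Q(X_*)$), I first establish uniform bounds $C^{-1}\le a,c\le C$ on the wedge exactly as in \eqref{density-lower}--\eqref{densityboundinl6-3}. On $L_3$, the expression $s|_{L_3}=h^{*}(x)$ combined with $\partial_- s = -c\,ds/dx = -2c\xi-(c/\gamma)\hat S_x a$ yields the pointwise formula $\xi = \tfrac12\bigl(\partial_x h^{*}-\tfrac{1}{\gamma}\hat S_x a\bigr)$, which is bounded below by a positive constant for sufficiently small $|X_*-x_*|$ thanks to the strict inequality in \eqref{increasing entropy-1}. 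Integrating the Riccati equation $\partial_+\xi=k_1(k_2(3\xi+\zeta)+\xi\zeta-\xi^2)$ along forward characteristics then transports this positivity into the interior of the wedge.

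Step 4 ($\zeta>0$ off $L_5$) is where the sign opposite to Theorem \ref{goursatlocal-1} emerges: evaluating $\partial_-\zeta=k_1(-k_2(\xi+3\zeta)+\xi\zeta-\xi^2)$ at $\zeta=0$ with $\xi>0$ and $k_1,k_2>0$ gives $\partial_-\zeta|_{L_5}=-k_1\xi(k_2+\xi)<0$. Because $L_5$ is now the upper-time boundary of the wedge (whereas $L_4$ was the lower-time boundary in Theorem \ref{goursatlocal-1}), the direction $\partial_-$ points out of $\tilde Q(X_*)$, and this negative sign forces $\zeta$ to be strictly positive immediately upon entering the interior. A maximum-principle-type argument along backward characteristics then propagates the conclusion to all of $\tilde Q(X_*)\setminus L_5$: any hypothetical first crossing of $\zeta$ from positive to zero inside would require $\partial_-\zeta\ge 0$ at the crossing point, contradicting the negative value computed whenever $\zeta=0$ and $\xi>0$. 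The main technical obstacle will be Step~2's vertex matching: aligning the boundary data in \eqref{boundary condition3-2} with the values $(u_0^*,p_0^*)=(u,p)(E)$ inherited from the left Goursat problem demands careful bookkeeping of signs at $E$, and only once that compatibility is verified does the Gronwall argument for $p\equiv p_0^*$ on $L_5$ close cleanly.
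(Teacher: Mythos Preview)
Your proof is correct and follows essentially the same approach as the paper's own proof, which is very brief and simply refers back to the corresponding steps in the proof of Theorem \ref{goursatlocal-1}. You correctly spell out the key geometric point in Step~4---that $L_5$ is the upper-time boundary of $\tilde Q(X_*)$, so $\partial_-$ points out of the wedge and the negative sign of $\partial_-\zeta$ at $\zeta=0$ forces $\zeta>0$ upon entering the interior---which the paper handles only implicitly by writing ``similarly to the proof of \eqref{c5-1-a}.''
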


\begin{proof} We divide the proof into two steps.
\medskip

\textbf{1}. {\it Local existence of the right oriented Goursat problem in EFH}.
In order to solve the right oriented Goursat problem (\ref{gusat problem}) with \eqref{free boundary-2}--(\ref{boundary condition-2}),
we just need to reformulate it as a normal  Goursat problem under some coordinate transformations.

First, we  introduce the following new coordinates:
\begin{equation}\label{normalchange-2}
(\tau, y)=(t-T_E,x-X_E).
\end{equation}
Denote
\begin{equation}\label{rsnewform1-2}
\begin{split}
&\tilde{s}_*(\tau,y)= \tilde{s}(\tau+T_E,y+X_E),\\
&\tilde{r}_*(\tau,y)= \tilde{r}(\tau+T_E,y+X_E), \\
& \hat{S}_*(y)= \hat{S}(y+X_E).
\end{split}
\end{equation}
Then equations (\ref{gusat problem}) can be rewritten as
\begin{equation}\label{gusat problem-refor-2}
\begin{cases}
(\tilde{s}_*)_y+\frac{1}{\tilde{c}}(\tilde{s}_*)_\tau=-\frac{1}{2\gamma} \frac{(\hat{S}_*)_y}{\hat{S}_*} \tilde{r}_*,\\[1.5mm]
(\tilde{r}_*)_y-\frac{1}{\tilde{c}}(\tilde{r}_*)_\tau=-\frac{1}{2\gamma} \frac{(\hat{S}_*)_y}{\hat{S}_*} \tilde{s}_*,\\[1.5mm]
(\hat{S}_*)_\tau=0,
\end{cases}
\end{equation}
where
\begin{equation}\label{free boundary-refor-2}
\tilde{c}(\tau,y)=c(\tau+T_E,y+X_E,\tilde{s}_*, \tilde{r}_*)
=K_c \big(\frac{\tilde{s}_*-\tilde{r}_*}{2}\big)^{\frac{\gamma+1}{\gamma-1}}(\hat{S}_*)^{\frac{1-3\gamma}{2\gamma(\gamma-1)}}.
\end{equation}

The vertex for the reformulated Goursat problem is still $E$,
and the two free boundaries can be given by
\begin{equation}\label{free boundary-refor_2-2}
\begin{split}
\hat{L}_3&:\   \frac{\dd \tau^3(y)}{\dd y}=-\frac{1}{\tilde{c}(\tau^3(y),y)},\,\,\, \tau^3(0)=0,\\[1mm]
\hat{L}_5&:\   \frac{\dd \tau^5(y)}{\dd y}=\frac{1}{\tilde{c}(\tau^5(y),y)},\,\,\, \tau^5(0)=0.
\end{split}
\end{equation}
Thus, the time-space domain under consideration is
$$
\tilde{A}(-X_E)=\big\{(\tau,y)\,:\, \tau^3(y)\leq \tau \leq \tau^5(y),\,y \in [0,-X_E]\big\},
$$
and the boundary conditions are
\begin{equation}\label{boundary condition-refor-2}
\begin{split}
\tilde{L}_3&:\,\tilde{s}_*(\tau^3(y),y)=\tilde{n}^3(y),\,\, \hat{S}_*(\tau^3(y),y)=\tilde{l}^3(y) \qquad\, \text{for $y \in [0,-X_E]$},\\[1mm]
\tilde{L}_5&:\, \tilde{r}_*(\tau^5(y),y)=\tilde{n}^5(y),\,\, \hat{S}_*(\tau^5(y),y)=\tilde{l}^3(y)\qquad\, \text{for $y \in [0,-X_E]$},
\end{split}
\end{equation}
where $\tilde{n}^3(y)=\tilde{h}^3(y+X_E)$, $\tilde{n}^3(y)=\tilde{h}^3(y+X_E)$,
and $\tilde{l}^3(y)=\tilde{f}^3(y+X_E)$ are all $C^1$-functions in $[0, -X_E]$.

Then,  the desired local well-posedness of the boundary value
problem (\ref{gusat problem-refor-2})--(\ref{boundary condition-refor-2})
in $\tilde{A}(-X_E)$ for some $T>0$ can be obtained by the similar
argument (such as transformation \eqref{fixboundary})  used in the proof
of Theorem \ref{goursatlocal} and the conclusion of Theorem \ref{thA},
which  implies the desired local well-posedness  in   Theorem \ref{goursatlocal-2}.

\medskip
\textbf{2}. {\it Verification of \eqref{c4-2}--\eqref{c5-2-a}}. In the rest of the proof, we denote $C\geq 1$ as a generic constant depending only on
$T$ and the boundary conditions on $L_3$ and $L_5$,
which may be different at each occurrence.

First, the verification of \eqref{c4-2} follows from the same argument used in the proof for \eqref{c4-1}.
Next, similarly to the proof of   \eqref{density-lower},  we can  also obtain
\begin{equation}\label{densitylow3}
C^{-1}\leq a(t,x), c (t,x)\leq C\qquad\, \text{for $(t,x)\in \tilde{Q}(X_*)$}
\end{equation}
if  $|X_*-x_\epsilon |$ is  small enough, which, together with  \eqref{relation_2-1upperbound-1}
and the argument used in the proof of  \eqref{postive}, implies
$$
\, C^{-1}\leq \xi\leq C \qquad\, \text{for $(t,x)\in \tilde{Q}(X_*)$}.
$$
Finally, similarly to the proof of   \eqref{c5-1-a}, we can also show
$$
\zeta>0 \qquad\, \text{in $\tilde{Q}(X_*)\setminus L_5$}.
$$
This completes the proof.\end{proof}

It should be  pointed out that the solution in the above local existence, Theorem \ref{goursatlocal-2},
might not reach boundary $HF$, {\it i.e.}, $X_*< 0$.
If that is true, by shrinking the size of $EAF$ (which accordingly shrinks the width of $EFH$),
we could make up to boundary $HF$, since the local existence time only
depends on  the $C^1$ norm of $\tilde{f}^3(x)$, $\tilde{h}^3(x)$, and $\tilde{h}^5(x)$.
Thus, in the rest of this section, we always take $X_*=0$.

\begin{remark}
For verifying   \eqref{c4-2}--\eqref{c5-2-a} conveniently,
we have given the boundary conditions in Condition {\rm\ref{construction-2}}
through the  independent variable $x$, instead of $t$.
\end{remark}

\subsection{The de-rarefaction phenomenon}
Now we are ready to  establish the desired  continuous solution, which
can be given via  the following theorem:

\begin{theorem}\label{derarefaction}
Let $T_m>T_H>T_E>T_A>0$
$($see Fig. {\rm \ref{Fig:example4}}$)$.
Then we can construct a nonisentropic  continuous solution $(v, u, S)$ in $[0,T_m)\times \mathbb{R}$
of the Cauchy problem \eqref{lagrangian1q}--\eqref{lagrangian3q} with proper initial data,
so that $(v, u, S)(t,x)$ satisfy the following properties{\rm :}
\begin{itemize}
\item [\rm (i)]  The solution is $C^1$ for $t\in [0, T_m)$, except on the forward and backward characteristics
  passing through $E$, the forward characteristic passing through  $F$,  and lines $x=X_A$
 and $x=0$ {\rm (}i.e., only Lipschitz continuous{\rm )}{\rm ;}

\smallskip
\item [\rm (ii)]   $S_x(t,x)>0$ for $(t,x)\in [0,T_m)\times [X_E,0]$,
  and   $S_x(t,x)=0$ for $(t,x)\in [0,T_m)\times (-\infty, X_E)\cup (0,\infty)${\rm ;}

\smallskip
\item [\rm (iii)]
In the isentropic region to the left of EA for $t\in [0, T_m)$, the forward wave
is rarefaction {\rm (}$\alpha\geq 0${\rm )}, and the backward wave is a compression before the blowup {\rm (}$\beta\leq 0${\rm )}{\rm ;}
in the isentropic region to the right of HF, the forward and backward waves are both rarefaction waves {\rm (}$\alpha\geq 0$ and $\beta\geq 0${\rm )}{\rm ;}

\smallskip
\item [\rm (iv)] The solution forms a shock wave when $t=T_m$ in the isentropic region to the left of EA.
\end{itemize}
\end{theorem}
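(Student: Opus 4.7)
The plan is to construct the solution on $[0, T_m) \times \mathbb{R}$ in a piecewise fashion, using the two Goursat problems of Theorems \ref{goursatlocal-1} and \ref{goursatlocal-2} as the non-isentropic core, the classical $C^1$ theory for the isentropic $p$--system in the exterior regions, and finally Theorem \ref{p_sing_thm} to identify $T_m$ as a shock-formation time in the left isentropic exterior. The initial data at $t = 0$ are not prescribed first, but rather read off from the construction by tracing characteristics down to the $t = 0$ line.

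First, I apply Theorem \ref{goursatlocal-1} on the triangle $FEA$ and Theorem \ref{goursatlocal-2} on the triangle $EFH$, choosing the boundary data on their common backward characteristic $L_3 = EF$ to coincide. The two $C^1$ solutions then agree on $L_3$ and piece together into a continuous non-isentropic solution on the diamond $FEA \cup EFH$. Second, I extend to the exterior using the traces of this Goursat solution on the material segment $EA$ (resp.\ on $HF$), together with the constant entropy $S_0(X_E)$ (resp.\ $S_0(0)$) prescribed outside; this gives an isentropic boundary value problem which I solve both backward in time down to $t = 0$ and forward in time as far as possible, via the classical local $C^1$ theory for the $p$--system and the angular-domain well-posedness result recalled in the appendix. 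On the central initial strip $[X_E, 0]$ I prescribe a locally stationary profile with $(u, p)$ constant and the entropy matching the Goursat data. The initial data at $t = 0$ then consist of an isentropic forward rarefaction simple wave on $(-\infty, X_E)$, a stationary varying-entropy profile on $[X_E, 0]$, and an isentropic backward rarefaction simple wave on $(0, \infty)$, exactly the rarefactive structure claimed.

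Properties (i)--(iii) follow by unpacking this construction. Property (ii) is built in through the stationary ansatz on $[X_E, 0]$ and the Lagrangian conservation of entropy \eqref{s con}. Property (iii) follows because \eqref{c3-1}--\eqref{c5-1-a} give $\xi > 0$ and $\zeta < 0$ inside $Q(x_*)\setminus L_4$, so the traces on $EA$ produce $\alpha \geq 0$ (forward rarefaction) and $\beta \leq 0$ (backward compression) in the left isentropic exterior, while \eqref{c3-2}--\eqref{c5-2-a} give $\alpha \geq 0$ and $\beta \geq 0$ (both rarefactions) in the right exterior; the signs of $s_x, r_x$ are preserved along isentropic characteristics via \eqref{srconq}, so these propagate throughout the exterior regions. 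Property (i) reflects the fact that the matching of the Goursat solution with the isentropic exterior extension is only Lipschitz, not $C^1$, across $L_4$, $L_5$, and the material line $x = X_A = X_E$, along with the characteristic through $F$ where the trace data on $L_3$ may have a jump in derivative.

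Finally, for (iv), in the left isentropic exterior the constant-entropy $p$--system holds and its backward characteristics carry the trace value $\beta < 0$ from $EA$ into the exterior; by Theorem \ref{p_sing_thm}, the sign hypothesis \eqref{p_lemma_con} is violated, so the solution breaks down at some finite time $T_m > T_A$. Under the uniform density upper and lower bounds inherited from Lemmas \ref{densityupperbound}--\ref{density_low_bound_1-3}, this breakdown is in fact a shock wave (as elaborated in \S 6.2). The main obstacle will be the careful bookkeeping of regularity across the several interface curves in the construction, and in particular verifying that the backward-in-time isentropic extensions from $EA$ and $HF$ reach $t = 0$ without intervening singularities; this relies on the smallness of $|x_*|$ and $|X_* - x_*|$ built into Theorems \ref{goursatlocal-1}--\ref{goursatlocal-2} and on the rarefactive signs of the Riemann invariants in the exterior, which prevent backward blowup.
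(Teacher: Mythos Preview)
Your overall architecture matches the paper's: piece together the two Goursat solutions on $FEA$ and $EFH$, extend into the isentropic exteriors left of $EA$ and right of $HF$, and read off the rarefaction/compression signs from \eqref{c3-1}--\eqref{c5-1-a} and \eqref{c3-2}--\eqref{c5-2-a}. That much is correct.

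There is, however, a genuine gap in how you obtain property (iv) with the required ordering $T_m > T_H$. Invoking Theorem \ref{p_sing_thm} only tells you that $\beta<0$ somewhere forces finite-time blowup; it does not tell you \emph{when}. The theorem statement demands that the shock form strictly after the forward wave has traversed the entire entropy-varying strip and emerged at $H$. The paper secures this not through the smallness of $|x_*|$ as you suggest, but by choosing the \emph{wave amplitude} on $L_3$ (that is, $\partial_x h^*$) arbitrarily small, treating the construction as a perturbation of the stationary solution. By continuous dependence on the boundary data in the Goursat problems, this forces $(\alpha,\beta)$ to be uniformly small on $EA$ and $HF$; the explicit Riccati integral $\beta^{-1}(t)=\beta^{-1}(\tau)+\int_\tau^t b_2\,\dd\sigma$ then shows that the blowup time along any backward characteristic from $EA$ can be pushed beyond $T_H$. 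Without this amplitude parameter you cannot guarantee $T_m>T_H$, and the whole point of the example --- that initially rarefactive data produce a shock --- collapses.

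A second omission is the density lower bound in the left isentropic region. The paper argues that at the intersection point $B$ of the forward characteristic $BE$ and the backward characteristic $AB$ one has $s_B=s_E>r_A=r_B$ (again by smallness of the perturbation from stationary, where $s_E>r_E=r_A$ trivially), so $\rho_B>0$; moreover $\rho_B$ is the minimum density in that region. This is what underwrites the classical $C^1$ existence there up to $T_m$. Your appeal to Lemmas \ref{densityupperbound}--\ref{density_low_bound_1-3} is not quite apt, since those are stated for the Cauchy problem on all of $\mathbb{R}$ with Condition \ref{initialdata2}, not for a characteristic boundary-value problem in a half-plane. Finally, note that Theorem \ref{p_sing_thm} itself is a Cauchy-problem statement; what you actually need here is just the Riccati ODE \eqref{p_y_eq-1} along a single backward characteristic emanating from $EA$, which is more elementary and is what the paper uses directly.
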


\begin{proof}
We now consider the isentropic regions to the left of EA and to the right of HF,
before some time $T^*>T_H$; see
Fig. \ref{Fig:example4}.

First, as a summary, we know that, within the life span of the continuous solution,

\smallskip
\begin{itemize}
\item In the isentropic region to the left of EA, the forward wave is rarefaction ($\alpha\geq 0$),
and the backward wave is a compression before the blowup ($\beta\leq 0$).

\smallskip
\item In the isentropic region to the right of HF, the forward wave is rarefaction ($\alpha\geq 0$),
and the backward wave is also a rarefaction ($\beta\geq 0$).
\end{itemize}

\smallskip
\noindent
Recall that, by \eqref{yq odesq}, in the isentropic region,
\[
\partial_+ \alpha=-b_2\, \alpha^2, \qquad \partial_- \beta=-b_2\, \beta^2,
\]
which gives
\[
\frac{1}{\alpha(t,x^1(t,x^1_0))}=\frac{1}{\alpha(\tau,x^1(\tau,x^1_0))}+\int_\tau^t b_2(\sigma,x^1(\sigma,x^1_0))\, \text{d}\sigma
\]
along some forward characteristic curve starting from $(0,x^1_0)$, and
\[
\frac{1}{\beta(t,x^2(t,x^2_0))}=\frac{1}{\beta(\tau,x^2(\tau,x^2_0))}+\int_\tau^t b_2(\sigma,x^2(\sigma,x^2_0))\, \text{d}\sigma
\]
along some  backward characteristic curve starting from $(0,x^2_0)$.

Thus, choosing the initial variation of $\alpha$ on $L_3$ small enough and using system \eqref{gusat problem},
it is easy to see that, for $t\in [0, T_H)$, $(\alpha, \beta)$ are small enough on EA and HF
such that $(\alpha, \beta)$ do not blow up to $\infty$ or $-\infty$
in the region to the left of EA and to the right of HF, and $s_E> r_A$.
In fact, in the extreme case that there is no wave on $L_3$,
the solution in this example becomes a stationary one,
so that $s_E>r_E=r_A$.
By the continuous dependence of the solution with respect to the initial data,
we know that  $s_E> r_A$ still holds, provided that the initial perturbation on $L_3$ is small enough.

Since $s$ and $r$ are constant along the forward and backward characteristics, respectively,
we know that, at the interaction point B of the forward characteristic BE and backward characteristic AB,
$$
s_B=s_E> r_A=r_B,
$$
which indicates that $\rho_B$ is positive.
Furthermore, it is easy to check that $\rho_B$ is the minimum value of the density to the left of EA,
which includes the isentropic backward compression and forward rarefaction, by using Lemma \ref{riclemma} in the isentropic region.
In fact, in the isentropic region, the density function is decreasing after the rarefactive wave and increasing after a compressive wave.
Then, using the standard characteristic method such as in \cite{Lidaqian}, we can prove the existence of a continuous
solution to the left of EA when $t<T^*$.
On the right-hand side of HF, the existence of a continuous solution can also be obtained, since the minimum density is $\rho_H$.
Here, in this classical existence result, the $C^1$ estimate comes from the bound of $(\alpha,\beta)$ discussed in the previous paragraph.

Finally, since the backward wave on EA is a compression which becomes a backward compression simple wave in the top-right region of BE,
it forms a shock in a finite time. Then we complete the construction.
\end{proof}

\section{Additional Remarks}
In this section, we make two important  remarks on a better density lower bound estimate and the shock formation.

\subsection{A better estimate on the lower bound of the density for the $p$--system}

We now discuss another possible condition on the initial data with a far-field vacuum,
which also leads to the proof of the formation of singularities.
The new condition on the initial data is \eqref{apenc}, which takes the role of \eqref{yq1}.
We also keep all the other initial assumptions in Condition \ref{initialdata2}.

For simplicity, we consider only the isentropic case; a similar new result
for the nonisentropic case can be obtained.
The initial condition \eqref{apenc} is more restrictive than \eqref{yq1}
in Theorem \ref{p_sing_thm} on the far-field density;
however, a more accurate estimate of the life span of $C^1$ solutions
can be provided when such a new initial condition is satisfied.

The result we prove here is the same as Theorem \ref{p_sing_thm},
with only change of replacing \eqref{yq1} by \eqref{apenc}.
On the other hand, if \eqref{apenc} is satisfied, we can
obtain a better lower bound estimate of the density,
in the order of $O((1+t)^{-1})$ for any $\gamma>1$ in the interior (not near the far-field),
than the bound in Lemma \ref{density_low_bound_1-3}.
This estimate leads to a better estimate of the life span of $C^1$ solutions.

Since the proof of the new result is almost the same as that of Theorem \ref{p_sing_thm},
except the interior lower bound estimate of the density, we show only this part here.

\begin{lemma} \label{property1}
Let \eqref{apenc} hold.
Then
\beq\label{al-be-tq1}
\max\big\{\sup_{(t,x)\in [0, T)\times \mathbb{R}}s_x(t,x), \sup_{(t,x)\in [0, T)\times \mathbb{R}}r_x(t,x)\big\}<M.
\eeq
\end{lemma}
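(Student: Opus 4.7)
The plan is to propagate the pointwise upper bound $\max\{s_x, r_x\} < M$ directly by a maximum-principle argument on the pair $(s_x, r_x)$, rather than on the weighted Riccati variables $\alpha, \beta$. The reason is that, under \eqref{apenc} alone, $\alpha(0,x) = a_0(x)^{(\gamma+1)/(2(\gamma-1))}\, s_x(0,x)$ may be unbounded at the far-field where $a_0 \to \infty$ as $\rho_0 \to 0$, so the bound of Lemma \ref{lemma_p_2} is not directly available.

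First, I would differentiate the Riemann invariant identities $\partial_+ s = 0$ and $\partial_- r = 0$ with respect to $x$ to obtain
\begin{equation*}
\partial_+ s_x = -c_x\, s_x, \qquad \partial_- r_x = c_x\, r_x.
\end{equation*}
In the isentropic regime, $c = K_c\, a^{(\gamma+1)/(\gamma-1)}$ combined with $a_x = (s_x - r_x)/2$ gives
\begin{equation*}
c_x = \frac{(\gamma+1) K_c}{2(\gamma-1)}\, a^{2/(\gamma-1)}\,(s_x - r_x),
\end{equation*}
so that $c_x$ and $s_x - r_x$ share the same sign.

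Next, set $\mathcal{M}(t) := \max\{\sup_x s_x(t,\cdot),\, \sup_x r_x(t,\cdot)\}$ and focus on the nontrivial case $M > 0$ (the case $M \leq 0$ is handled via the sign-preservation property of the Riccati equations \eqref{p_y_eq-1}). Suppose, at some time $t$ with $\mathcal{M}(t) \geq 0$, the supremum is attained by $s_x$ at an interior point $x^*$; then $s_{xx}(t, x^*) = 0$, so
\begin{equation*}
\partial_t s_x(t, x^*) = -c_x(t, x^*)\, \mathcal{M}(t).
\end{equation*}
At $x^*$, we have $s_x = \mathcal{M}(t) \geq r_x$, so $a_x(t, x^*) \geq 0$ and $c_x(t, x^*) \geq 0$, yielding $\partial_t s_x(t, x^*) \leq 0$. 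The symmetric argument applies at an interior max of $r_x$: there $s_x \leq r_x$, so $c_x \leq 0$ and $\partial_t r_x = c_x r_x \leq 0$. A Danskin-style envelope argument then shows that $\mathcal{M}(t)$ cannot cross upward through any level strictly exceeding $\max\{\mathcal{M}(0), 0\}$, so $\mathcal{M}(t) < M$ throughout $[0, T)$.

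The main obstacle is rigorously implementing the envelope step when the supremum of $s_x$ or $r_x$ is not attained on $\mathbb{R}$, a situation possible under far-field vacuum where $s_x, r_x$ need not decay at infinity (even though $s, r$ are uniformly bounded). I would address this via approximate maximizers together with upper Dini derivatives, leveraging the local $C^1$ regularity and the uniform $L^\infty$ bounds on $(a, c)$ from Lemma \ref{densityupperbound} to pass safely to the supremum. An alternative is to apply a viscosity-style comparison to a level-set truncation of $(s_x, r_x)$, which gives the same conclusion.
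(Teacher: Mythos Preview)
Your route and the paper's share the same core equation, $\partial_+ s_x = -c_x\, s_x = k_1\, s_x(r_x - s_x)$, but differ in how they exploit it. You run a global envelope argument on $\mathcal{M}(t)=\max\{\sup_x s_x,\sup_x r_x\}$, which leaves you to handle non-attained suprema on $\mathbb{R}$ and the regularity step ``$s_{xx}(t,x^*)=0$'' (this needs $s_x\in C^1$, not guaranteed for a merely $C^1$ solution $(v,u)$). The paper instead argues by contradiction and \emph{localizes}: if $s_x(t_0,x_0)=M$, it restricts to the backward characteristic triangle $\Omega$ with apex $(t_0,x_0)$, which is compact since $c$ is bounded (Lemma~\ref{densityupperbound}). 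On $\Omega$ the first time $t_1$ at which $\max\{s_x,r_x\}=M$ is attained at some $(t_1,x_1)$, with $s_x,r_x<M$ strictly before $t_1$. Then along the forward characteristic through $(t_1,x_1)$ one has the logistic bound $\partial_+ s_x \le K_3\, s_x(M-s_x)$, and a one-line ODE integration shows $s_x$ cannot reach $M$ in finite time --- no $s_{xx}$, no Dini derivatives, no approximate maximizers. The localization to a compact characteristic triangle is the device that makes the argument clean; your approach would also succeed once the approximate-maximizer and regularity details are supplied, but it is heavier than necessary.
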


Once we prove this lemma, then, by \eqref{p1} and \eqref{p_sr},
\[
v_t=u_x=\frac{1}{2}(s_x+r_x)
\]
so that
\[
v(t,x)< v(0,x)+Mt,
\]
which provides an interior lower bound of the density in the order of $O((1+t)^{-1})$.
Then, similarly to the proof of Theorem \ref{p_sing_thm},
we can obtain the necessary and sufficient result on the formation of singularities.

\begin{remark}
In particular, if
$$
\max\big\{\sup_{x\in\mathbb R}s_x(0,x), \sup_{x\in\mathbb R}r_x(0,x)\big\}\leq 0,
$$
then it follows from the definition of $(\alpha,\beta)$ and the Riccati equations \eqref{p_y_eq-1}
that
$$
\max\big\{\sup_{(t,x)\in [0, T)\times \mathbb{R}}s_x(t,x), \sup_{(t,x)\in [0, T)\times \mathbb{R}}r_x(t,x)\big\}\leq 0,
$$
which implies that
\[
\rho(t,x)>\rho(0,x) \qquad\,\, \text{for $(t,x)\in (0,T]\times \mathbb{R}$}.
\]
\end{remark}

We now give the proof of Lemma \ref{property1} here for self-containedness.

\medskip
\noindent
{\it Proof of Lemma {\rm \ref{property1}}}.
We  prove  (\ref{al-be-tq1}) by contradiction. Without loss of generality, assume that
$s_x(t_0,x_0)=M$ at some point $(t_0,x_0)$; see Fig. \ref{fig0q}.

\begin{figure}[htp] \centering
		\includegraphics[width=.4\textwidth]{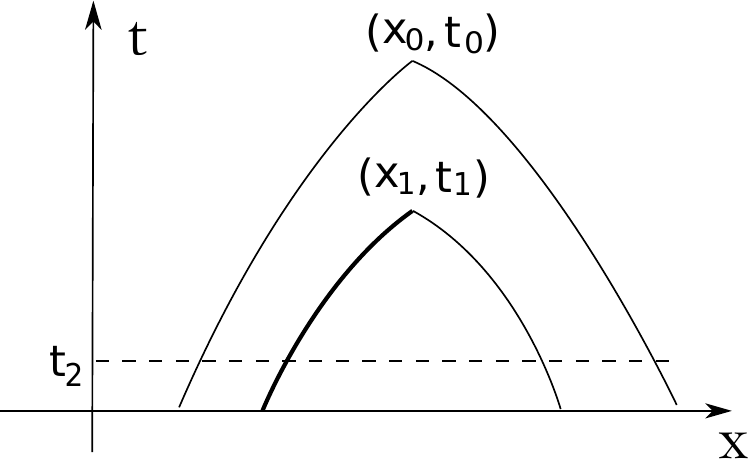}
		\caption{Proof of Lemma {\rm \ref{property1}}.\label{fig0q}}
\end{figure}

Since the wave speed $c$ is bounded above,
we can find the characteristic triangle with vertex $(t_0,x_0)$ and the lower boundary
on the initial line $t=0$, denoted by $\Omega$.
Then we can find the first time $t_1$ such that $s_x=M$ or $r_x=M$ in $\Omega$.
More precisely,
\[
\max\big\{\sup_{(t,x)\in \Omega, t<t_1}s_x(t,x), \sup_{(t,x)\in \Omega, t<t_1}r_x(t,x)\big\}<M,
\]
and $s_x(t_1,x_1)=M$ and/or $r_x(t_1,x_1)=M$ for some $(t_1,x_1)\in\Omega$.
Without loss of generality, we still assume that
$s_x(t_1,x_1)=M$.
The proof for the other case is the same.

Denote the characteristic triangle with vertex $(t_1,x_1)$ as $\Omega_1\subset\Omega$. Then
\beq\label{p_main_1}
\max\big\{\sup_{(t,x)\in \Omega_1, t<t_1}s_x(t,x), \sup_{(t,x)\in \Omega_1, t<t_1}r_x(t,x)\big\}<M,
\eeq
and $s_x(t_1,x_1)=M$.
By the continuity of $s_x$,  there exists $t_2\in[0,t_1)$ such that
\beq\label{p_main_2}
s_x(t,x)>0 \qquad\, \hbox{for any $(t,x)\in \Omega_1$ and $t\geq t_2$}.
\eeq

Set
\beq\label{kK1q}
K_3:=\max_{(t,x)\in  [0,T)\times \mathbb{R}}k_1(t,x),
\eeq
where $k_1(t,x)$ is defined in \eqref{k def},
and $K_3$ is a constant depending only on $\gamma$ and the initial condition.

\smallskip
Next we derive a contradiction.
By \eqref{frem1} and \eqref{p_main_1}--\eqref{kK1q},
along the forward characteristic segment through $(t_1,x_1)$ for $t_2\leq t<t_1$,
\begin{eqnarray*}
\partial_+s_x=k_1\big(s_x r_x-s_x^2\big)\leq  k_1 s_x(M-s_x) \leq K_3 s_x(M-s_x),
\end{eqnarray*}
which gives, through the integration along the characteristic,
\begin{eqnarray*}
\frac{1}{M}\ln (\frac{s_x(t)}{M-s_x(t)})\leq \frac{1}{M}\ln(\frac{s_x(t_2)}{M-s_x(t_2)})+K_3 (t-t_2).
\end{eqnarray*}
As $t\rightarrow t_1-$, the left-hand-side term tends to infinity,
while the right-hand-side terms tend to a finite number,
which gives a contradiction.
This implies that (\ref{al-be-tq1}) holds; that is, $s_x$ and $r_x$
are uniformly bounded above.
This completes the proof.

\subsection{Shock formation of the $p$--system}
Based on the conclusions obtained in Theorems \ref{p_sing_thm}--\ref{Thm singularity2},
a natural question is that, for the compressible perfect fluid,
what types of singularities the solution may form  due to the initial compression,
that is, how it grows out of $C^1$ solutions.
A widely accepted answer for this question for the one-dimensional case is that
the formed singularity is a shock wave.

There are many results related to the shock formation.
We now review some existing results on addressing this question.
In fact, based on the conclusions obtained in \cite{ Chenshuxing,  Chenshuxing2, Kongdexing,lebaud},
a fairly clear description on the shock formation mechanism of the $p$--system has been provided.
We also mention some related results on the nonisentropic flow.
In \cite{Chris}, this problem has also been considered by solving a free boundary
problem in a neighborhood of the blowup point.

For $\gamma>1$, in \cite{ Chenshuxing, lebaud},
it has been shown that the singularity caused by the initial compression is actually a shock,
under some additional information of the solution at the first blowup point.

First, under the hypothesis that one of the Riemann invariants is a constant,
the shock formation has been studied by Lebaud \cite{lebaud}:

\begin{theorem}[\cite{lebaud}]\label{leb}
Let $\gamma>1$, $r_0\in C^4$,  $s_0(x)=\overline{s}$  for $x\in \mathbb{R}$ with some constant $\overline{s}$,
$g(x)=-c(\overline{s},r_0(x))$, and $g'(x)$ takes its global minimum at $x_0$ with
\begin{equation}\label{lebaundcondition}
g'(x_0)<0,\quad g{''}(x_0)=0, \quad g{'''}(x_0)>0.
\end{equation}
Then  the  Cauchy problem \eqref{p1}--\eqref{p2} with \eqref{initial1.2}
admits a  weak entropy solution, which is smooth in $[0,t_0)$ and continuous in  $[0, t_0]$ and
can be extended as a weak entropy solution to $t>t_0$,
and the solution has a shock $x= \varphi(t)$ starting from  $(t_0,x_0)$.
\end{theorem}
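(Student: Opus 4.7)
\noindent
The plan is to exploit the fact that $s_0\equiv \overline{s}$ together with $\partial_+ s=0$ forces $s\equiv \overline{s}$ throughout the $C^1$ region, so the problem collapses to a scalar simple wave for $r$: along each backward characteristic, $r$ is conserved, and the backward characteristic issuing from $(0,x)$ is the straight line $X(t;x)=x+g(x)t$ with $g(x)=-c(\overline{s},r_0(x))$. Thus $r(t,X(t;x))=r_0(x)$, and the Jacobian of the characteristic map in $x$ equals $1+g'(x)t$. The first blowup time is therefore $t_0=-1/g'(x_0)$, occurring at $x_*=x_0+g(x_0)t_0$, where $g'$ attains its global (negative) minimum. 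On $[0,t_0)$ the inverse function theorem recovers classical regularity and the explicit formula $r(t,x)=r_0(\Psi(t,x))$, where $\Psi$ inverts $X$ in $x$.

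Next I would describe the blowup geometry at $(t_0,x_*)$ via the cubic condition. Taylor-expanding with $g'(x_0)<0$, $g''(x_0)=0$, $g'''(x_0)>0$ gives
\[
X(t_0,x_0+\eta)-x_*=\tfrac{t_0}{6}\,g'''(x_0)\,\eta^3+O(\eta^4),
\]
so the characteristic map has the standard $3/2$--cusp singularity along the direction through $x_*$, and the envelope of backward characteristics issues as a cusp from $(t_0,x_*)$. In particular, as one approaches $(t_0,x_*)$ along characteristics from either side of $x_0$, the limits $r_\pm$ are well-defined and distinct and serve as the left/right states of the nascent shock. A hodograph/Legendre-type transformation linearises the characteristic field locally and delivers the precise asymptotic expansion of $r$ near $(t_0,x_*)$ that feeds the next step.

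To extend the solution beyond $t_0$, I would formulate a free boundary problem for the shock curve $x=\varphi(t)$, $t\in[t_0,t_0+\delta]$. On either side of $\varphi$ the solution is still transported by backward characteristics issuing from initial data, so the unknowns $r_L(t,\varphi(t))$ and $r_R(t,\varphi(t))$ are determined by $r_0$ via the characteristic trace-back through $\varphi$. The Rankine--Hugoniot relation then yields the scalar ODE
\[
\varphi'(t)=\sigma\bigl(r_L(t,\varphi(t)),\,r_R(t,\varphi(t))\bigr),\qquad \varphi(t_0)=x_*,
\]
where $\sigma$ denotes the $p$--system shock speed. Using the $3/2$--cusp asymptotics from the previous step, I would rescale around $(t_0,x_*)$ by $\eta\sim (t-t_0)^{1/2}$ and set up a contraction mapping for $\varphi$ in a suitable weighted space; the nondegeneracy $g'''(x_0)>0$ is precisely what allows the right-hand side of the shock ODE to extend continuously to $t=t_0$ with the regularity needed to close the iteration. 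The Lax entropy condition $-c(r_R)<-\varphi'(t)<-c(r_L)$ follows from monotonicity of $c$ in $r$ combined with $g'(x_0)<0$.

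The main obstacle will be this last step, the construction of $\varphi$ out of the degenerate cusp: naive Picard iteration fails because the map $\varphi\mapsto(r_L,r_R)(t,\varphi(t))$ is only H\"older-continuous near $t_0$. Overcoming this requires careful use of the cubic nondegeneracy to select the right weighted norm (e.g.\ $C^1$ with weight $(t-t_0)^{-1/2}$) and to obtain sharp bounds on $r_0\circ\Psi$ from inverting the cubic model obtained in step two. Once $\varphi\in C^1([t_0,t_0+\delta])$ is constructed and entropy-admissibility is verified, the classical characteristic method applied on each side of $\varphi$ extends $(v,u)$ as a piecewise $C^1$ weak entropy solution carrying the single shock $x=\varphi(t)$, completing the proof.
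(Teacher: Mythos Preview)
The paper does not prove this theorem. Theorem~\ref{leb} appears in \S6.2 as part of a review of prior literature on shock formation; it is attributed to Lebaud \cite{lebaud} and stated without proof, followed only by remarks connecting condition~\eqref{lebaundcondition} to the characteristic-coordinate conditions \eqref{condition-1}--\eqref{condition-2} and pointing to the extensions in \cite{Chenshuxing} and \cite{Kongdexing}. There is therefore nothing in the paper to compare your argument against.

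That said, your sketch is broadly faithful to Lebaud's actual strategy: the reduction to a simple wave via $s\equiv\overline{s}$, the straight backward characteristics $X(t;x)=x+g(x)t$ with Jacobian $1+g'(x)t$, the identification of $t_0=-1/g'(x_0)$, the cubic normal form producing a $3/2$--cusp, and the free-boundary construction of $\varphi$ via Rankine--Hugoniot with a weighted fixed-point argument are all the right ingredients. One small point of notation: the theorem as stated in the paper writes the shock starting from $(t_0,x_0)$, whereas you (correctly) distinguish the initial foot $x_0$ where $g'$ is minimal from the blowup location $x_*=x_0+g(x_0)t_0$; the paper's $x_0$ in ``starting from $(t_0,x_0)$'' should be read as your $x_*$. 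Your caveat about the H\"older-only dependence of $(r_L,r_R)$ on $\varphi$ near $t_0$ is exactly the technical heart of Lebaud's construction, and the weighted-norm remedy you describe is the standard way through it.
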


\begin{remark}
From the definition of $g(x)$,
$g'(x_0)<0$ is equivalent to $r'_0(x_0)<0$.
Furthermore, for the Cauchy problem \eqref{p1}--\eqref{p2} with \eqref{initial1.2},
one can choose the characteristic coordinates $(a, b)$ in the following way {\rm (see \S 4 of Chapter 2 in \cite{Lidaqian}):}
On any fixed forward $($resp., backward$)$ characteristic, a $($resp., b$)$ is taken as the $x$-coordinate of
the intersection point of this characteristic with the $x$-axis.
Let  $x(t;b)$ be the  backward characteristic passing through point $(0,b)$. Then,
under the assumption that $s_0(x)=\overline{s}$, condition \eqref{lebaundcondition} is equivalent to
\begin{align}
&x_b(t_0;b_0)=0,\label{condition-1}\\
& x_{bb}(t_0;b_0)=0, \quad x_{bbb}(t_0;b_0)>0,\label{condition-2}
\end{align}
at the first blowup point $(t_0,x_0)$ with   $ x_0=x(t_0;b_0)$ $($see Lemma 3.2 of Kong \cite{Kongdexing}$)$.

For general smooth data $(s_0(x), r_0(x))\in C^4$,
if only one Riemann invariant blows up, and the blowup point $(t_0,x_0)$ is formed by the normal
squeeze ({\it i.e.}, satisfying conditions \eqref{condition-1}--\eqref{condition-2}) of only one family
of characteristics, while another family of characteristics does not squeeze at the same point,
the problem of formation and construction of a shock wave was also established
in Chen-Dong \cite{Chenshuxing}.
\end{remark}

\begin{remark}
By employing the method of characteristic coordinates \cite{Lidaqian} and the singularity theory of smooth mappings \cite{Whitney},
a similar result has been established for general $2\times 2$ quasilinear hyperbolic systems in   \cite{Kongdexing}.
\end{remark}

However, in general, condition \eqref{condition-2} near the first blowup point
is difficult to verify.
Even the time and position of the first blowup point are also difficult to obtain
for general systems of hyperbolic conservation laws.

\medskip
Finally, we present a related result on the nonisentropic flow.
Assume that  the initial data $(\rho_0, u_0,S_0)$ satisfy
$$
\rho_0=\overline{\rho}+\epsilon f_1(x), \quad  u_0=\epsilon f_2(x), \quad S_0=\overline{S}+\epsilon f_3(x),
$$
where  $\epsilon>0$, $\overline{\rho}>0$, and $\overline{S}$ are all constants, and
$f_i(x)$, $i=1,2,3$, are smooth functions with compact support.
Denote
\begin{equation*}
\begin{split}
&P(\rho,S)=Ke^{\frac{S}{c_v}}\rho^\gamma,\quad\, c^2_E(\rho,S)=\partial_\rho P(\rho,S),\quad\, \overline{c}_E=c_E(\overline{\rho},\overline{S}),\\
&h_1(x)=\frac{\partial_\rho c_E(\overline{\rho},\overline{S})+\overline{c}_E}{2\overline{c}_E}\big(f'_2(x)-\frac{\overline{c}_E}{\overline{\rho}}f'_1(x)\big),\\
&h_2(x)=\frac{\partial_\rho c_E(\overline{\rho},\overline{S})+\overline{c}_E}{2\overline{c}_E}\big(f'_2(x)+\frac{\overline{c}_E}{\overline{\rho}}f'_1(x)\big).
\end{split}
\end{equation*}

Assume that
$$
M_1=\min h_1(x)<M_2=\min h_2(x),
$$
and $h_1(x)$ has a unique strictly negative quadratic minimum.
Then, as shown in Chen-Xin-Yin \cite{Chenshuxing2},
for small $\epsilon>0$,
the Cauchy problem \eqref{lagrangian1q}--\eqref{lagrangian3q} and \eqref{initial1} admits a weak entropy solution,
which is smooth in $[0,T_\epsilon)$ and continuous in  $[0,T_\epsilon]$,
and has a unique shock $x=\varphi(x)$ starting from the unique blowup point $(T_\epsilon,x_\epsilon)$ in $(T_\epsilon,T_\epsilon+1)$.

\medskip
\appendix
\section{Local-in-Time Well-Posedness of a Boundary Value Problem for Nonlinear Hyperbolic Systems}

In this appendix, we consider the local-in-time well-posedness of the  following boundary value problem in some angular domain:
\begin{equation}\label{typicalboundaryproblem}
\begin{cases}
(U_i)_t+\kappa_i(t,x,U)(U_i)_x=\phi_i(t,x,U) \qquad \text{for $(t,x)\in R(\delta_0)$ and  $i=1,2$}, \\[1mm]
U_1=G_1(t)  \qquad \text{on $x=t$},\\[1mm]
U_2=G_2(t)  \qquad \text{on $x=0$},
\end{cases}
\end{equation}
where $U=(U_1,U_2)^\top$,
$$
R(\delta_0):=\big\{(t,x)\; : \; 0\leq t \leq \delta_0\leq 1, \,\, 0\leq x\leq t\big\},
$$
$\kappa_1$, $\kappa_2$, $\phi_1$, and $\phi_2$ are assumed to be the functions  of $(t,x,U)$,
and  $G_1$ and $G_2$ are assumed to be the functions of  $t$.

\subsection{Definitions of some functionals}
To make the corresponding statement and proof precise, we first define some required functionals.

\smallskip
Let $f(t,x)$ be a function on a bounded domain $\Omega$.
The modulus of continuity of $f(t,x)$ is defined by the following nonnegative function:
\begin{equation*}
w(\epsilon | f)
:=\sup_{\begin{subarray}{c} |t_1-t_2|\leq  \epsilon \\ |x_1-x_2|\leq  \epsilon \\ (t_1,x_1), (t_2,x_2) \in \Omega \end{subarray}}
  |f(t_1,x_1)-f(t_2,x_2)|\qquad\,\,\, \mbox{for $0\leq \epsilon <\infty$}.
\end{equation*}
The  modulus of continuity of a vector function $\Xi=(\Xi_1,...,\Xi_k)^\top$ or a set of functions $\Gamma=\{\varrho\}$
on the bounded domain $\Omega$ can be  defined by
$$
w(\epsilon | \Xi):=\max_{i=1,...,k} w(\epsilon | \Xi_i),\qquad w(\epsilon | \Gamma)=\max_{\varrho\in \Gamma } w(\epsilon | \varrho).
$$
Moreover, we  define the  modified modulus of continuity of $f(t,x)$ by
\begin{equation*}
\Lambda(\epsilon | f):=\sup_{\begin{subarray}{c}  |t_1-t_2|\leq  \epsilon \\  0\leq \frac{x_1-x_2}{t_1-t_2}\leq 1 \\ (t_1,x_1), \ (t_2,x_2) \in \Omega \end{subarray} }
|f(t_1,x_1)-f(t_2,x_2)|\qquad\,\,\, \mbox{for $0\leq \epsilon <\infty$}.
\end{equation*}
Similarly,
$$
\Lambda(\epsilon | \Xi):=\max_{i=1,...,k} \Lambda(\epsilon | \Xi_i).
$$
Then, by direct calculations, we see that the modified modulus of continuity satisfies the following properties:
\begin{lemma}\label{fujiaxingzhi}
Let $\Omega=R(\delta)$ for some $\delta >0$. Then
\begin{equation*}
\begin{split}
&\Lambda((\epsilon_1+\epsilon_2) | f)\leq  \Lambda(\epsilon_1 | f)+\Lambda(\epsilon_2 | f),\\[0.5mm]
&\Lambda(\epsilon| fg)\leq  \|f\|_{C^0(R(\delta))} \Lambda(\epsilon | g)+ \|g\|_{C^0(R(\delta))} \Lambda(\epsilon | f),\\[0.5mm]
\displaystyle
& \Lambda(\epsilon| f(g(t,x)))\leq  w(\Lambda(\eta |g) | f),\\[0.5mm]
&\Lambda\big(\epsilon\big| f^{-1}\big)\leq  C^{-2} \Lambda(\epsilon | f) \qquad \mbox{if $|f(t,x)|\geq C>0$},\\[0.5mm]
&C_1w(\epsilon | f)\leq  \Lambda(\epsilon| f)  \leq  C_2w(\epsilon | f),\\[0.5mm]
&\Lambda(\epsilon| \int_{f_1(t,x)}^{f_2(t,x)} f(\tau; t,x)\dd\tau)
 \leq  C_3\big(\Lambda(\epsilon | f_1)+\Lambda(\epsilon | f_2)\big)
   +\int_0^\delta \Lambda(\eta| f(\tau; t,x))\,\dd\tau,
\end{split}
\end{equation*}
where $\epsilon, \epsilon_1,  \epsilon_2$, and $C$ are all positive constants,
$f, g, f_1$, and $f_2$ are functions on  $\Omega$,   $C_1$ and $C_2$ are positive constants independent of $f$,
and $C_3>0$ is a constant satisfying $|f(\tau; t,x)|\leq C_3 $
in $\{(t,x)\in R(\delta)\,:\, 0\leq f_1(t,x)\leq \tau \leq f_2(t,x)\leq \delta\}$.
\end{lemma}

\medskip
Define
\begin{equation*}\begin{split}
&\Sigma(\delta):= \big\{\psi(t,x)=(\psi_1,\psi_2)^\top\,:\,\psi\in C^1(R(\delta)), \, \psi(0,0)=0\big\},\\[2pt]
&\displaystyle
\Sigma(\delta | M_1):=\big\{\psi(t,x)\,:\, \psi\in \Sigma(\delta), \, \|\chi\|_{C^0(R(\delta))}\leq M_1\big\},
\end{split}
\end{equation*}
where
$\chi=(\chi_1, \chi_2,\chi_3,\chi_4)^\top$
with
$$
\chi_i=(\psi_i)_t, \quad \chi_{i+2}=(\psi_i)_t+( \psi_i)_x \qquad\,\, \mbox{for $i=1,2$}.
$$

\smallskip
Finally, for $\psi\in C^1(R(\delta))$, define
\begin{equation*}
\begin{cases}
\tilde{\kappa}_i=\kappa_i(t,x,\psi(t,x)),\quad \tilde{\phi}_i=\phi_i(t,x,\psi(t,x)) \qquad\,\, \mbox{for $i=1,2$},\\[4pt]
\displaystyle
\Gamma(\psi)=\big\{\tilde{\kappa}_1, \tilde{\kappa}_2, (\tilde{\kappa}_1)_x,  (\tilde{\kappa}_2)_x,    \tilde{\phi}_1,
              \tilde{\phi}_2, (\tilde{\phi}_1)_x,  (\tilde{\phi}_2)_x,  (1-\tilde{\kappa}_1(t,t))^{-1}, \tilde{\kappa}_2(t,0)^{-1} \big\}.
\end{cases}
\end{equation*}
For simplicity, in the rest of this appendix, we denote
\begin{equation*}
\begin{split}
&\|f(t,x)\|_\delta:=\|f\|_{C^0(R(\delta))},\quad  \|f(t)\|_\delta:=\|f\|_{C^0([0,\delta])}, \\[1mm]
& \|\Gamma(\psi)\|_\delta:=\max_{f\in \Gamma(\psi)}\|f\|_{\delta}, \quad G=(G_1,G_2)^\top,\quad \phi=(\phi_1, \phi_2)^\top.
\end{split}
\end{equation*}

\subsection{Conditions on  $G_i$, $\kappa_i$, and $\phi_i$ for $i=1,2$}
We assume that  $G_i$, $\kappa_i$, and $\phi_i$
satisfy the following conditions:

\smallskip
\begin{enumerate}
\item[\rm (i)] $G_1(t)$ and $G_2(t)$  are  $C^1$ functions of  $t$ in $ [0, \delta_0]$;

\item[\rm (ii)] $\kappa_i$, $(\kappa_i)_x$, $(\kappa_i)_{U_j}$, $\phi_i$, $(\phi_i)_x$,
and $(\phi_i)_{U_j}$, $i,j=1,2$,
are continuous functions of   $(t,x,U)^\top$ in $ R(\delta_0)\times \mathbb{R}^2$;

\item[\rm (iii)]  On $R(\delta_0)$, for any $\psi\in  \Sigma(\delta_0 | M_1)$,
\begin{equation}\label{coeff8}
\kappa_1(t,x,\psi)|_{x=0}= 0, \qquad  \kappa_2(t,x,\psi)|_{x=t}= 1.
\end{equation}
\end{enumerate}

\subsection{Local-in-time well-posedness theorem and its proof}

Now we state the main theorem of this appendix, which is a simplified version
of Theorem 6.1 in \cite[Chapter 2]{liyu}.
For self-containedness and convenience of the reader, we also give a proof here.

\begin{theorem}\label{thA}
Let conditions {\rm (i)}--{\rm (iii)} in \S {\rm A.2} hold.
Then the boundary value  problem \eqref{typicalboundaryproblem}
admits a unique $C^1$ solution $U=U(t,x)$ on $R(\delta)$ for sufficiently small $\delta>0$.
\end{theorem}

\begin{proof} We divide the proof into four steps.

\medskip
\noindent
\textbf{1}. {\it Linearization}.  Without loss of generality, we can assume that $U(0,0)=0$;
otherwise, it can be achieved by a translation transformation.  Moreover, we  choose  $M_1$  sufficiently large such that
\begin{equation}\label{jjjjj}
\|G_t(t)\|_{\delta_0}+\sup_{ \substack{(t,x)\in R(\delta_0), \\ |\psi|\leq 1}}|\phi(t,x,\psi)|\leq \frac{1}{2} M_1.
\end{equation}
By definition, for any  $\psi\in \Sigma(\delta | M_1)$,  $\|\psi\|_\delta\leq M_1 \delta $.

Now, for any $\psi\in \Sigma(\delta_0 | M_1)$, we consider the following linear problem:
\begin{equation}\label{linearproblem}
\begin{cases}
(U_i)_t+\tilde{\kappa}_i(t,x)(U_i)_x=\tilde{\phi}_i(t,x) \qquad \text{for $(t,x)\in R(\delta_0)$ and  $i=1,2$}, \\[1mm]
U_1=G_1(t)  \qquad \text{on $x=t$},\\[1mm]
U_2=G_2(t)  \qquad \text{on $x=0$}.
\end{cases}
\end{equation}

Let $X_i(\tau;t,x)$ for $\tau\leq t$ be the $i$-th characteristic curve of the system in \eqref{linearproblem}
which passes through point $(t,x)\in R(\delta_0)$ for $i=1,2$. Thus, for $i=1,2$,
\begin{equation}\label{curve--1}
\begin{cases}
\frac{\dd X_i(\tau;t,x)}{\dd\tau}=\tilde{\kappa}_i(\tau,X_i(\tau;t,x)),\\
X_i(t;t,x)=x.
\end{cases}
\end{equation}
Then it follows from \eqref{coeff8} that there exists a positive constant  $\delta_1\leq \delta_0$
such that  $X_1(\tau;t,x)$ and $X_1(\tau;t,x)$ are $C^1$ functions of $(\tau, t,x)$ and

\smallskip
\begin{enumerate}
\item[\rm (H1)] $\tilde{\kappa}_1(t,x)$ and $\tilde{\kappa}_2(t,x)$ satisfy
\begin{equation}\label{tezhengqulv}
\begin{split}
&\tilde{\kappa}_1(t,0)=0,\quad \tilde{\kappa}_2(t,t)=1 \qquad\,\, \text{for $t\in [0,\delta_1]$},\\
&\tilde{\kappa}_1(t,x)<1, \quad \tilde{\kappa}_2(t,x)>0 \qquad \text{for $(t,x)\in R(\delta_1)$};
\end{split}
\end{equation}

\item[\rm (H2)] The first  (resp., second) characteristic curve $X_1(\tau;t,x)$ (resp., $X_2(\tau;t,x)$) intersects
the boundary line $x=t$ (resp., $x=0$) at one and only one point $P_1=(\tau_1(t,x),y_1(t,x))$ (resp., $P_2=(\tau_2(t,x),y_2(t,x))$),
and $\tau_i(t,x)$ and $y_i(t,x)$ ($i=1,2$) are also $C^1$ functions on $R(\delta_1)$ and satisfy
\begin{equation}\label{tezhengqulv2}
y_1(t,x)=\tau_1(t,x)=X_1(\tau_1(t,x);t,x),\,\,\,
y_2(t,x)=0=X_2(\tau_2(t,x);t,x);
\end{equation}

\smallskip
\item[\rm (H3)] Each characteristic arc
 \begin{equation}\label{tezhengqulv3}
Q_i=\big\{y=X_i(\tau;t,x)\,:\, \tau_i(t,x)\leq \tau \leq t, (t,x)\in R(\delta_1)\big\}
\end{equation}
lies wholly on $R(\delta_1)$ for each $i=1,2$.
\end{enumerate}
These imply that
there exists a positive constant  $\delta_2\leq \delta_1$ such that
\begin{equation}\label{coeff11}
\|\psi\|_{\delta_2}\leq M_1 \delta_2\leq 1 \qquad\,\, \text{for any $\psi\in \Sigma(\delta_2|M_1)$},
\end{equation}
and  problem \eqref{linearproblem} has a unique $C^1$ solution $U$ in $R(\delta_2)$ satisfying
\begin{equation}\label{solutionform}
U_i(t,x)=G_i(\tau_i(t,x))+\int_{\tau_i(t,x)}^t \tilde{\phi}_i(\tau,X_i(\tau;t,x) )\,\text{d}\tau
\qquad\mbox{for $i=1,2$}.
\end{equation}
In this way, we determine an operator
\begin{equation}\label{solutionoperator}
U=\Phi \psi,
\end{equation}
which maps $\Sigma(\delta_2 | M_1)$ into $\Sigma(\delta_2)$.
We now show that operator $\Phi$ actually maps  $\Sigma(\delta_2 | M_1)$ into itself
and that, if $\delta_2$ is sufficiently small, $\Phi$ possesses a unique fixed point
that is the solution of the original problem \eqref{typicalboundaryproblem}.

\medskip
\noindent
\textbf{2}. {\it The apriori estimates of the linear problem}.
In order to establish the desired estimates, we need to estimate the first order derivatives of $U$.
First, formally differentiating each side of \eqref{linearproblem}  with respect to $x$,
we have
\begin{equation}\label{solutionform-1}
(-\varphi_i+\varphi_{2+i})_t+\tilde{\kappa}_i (-\varphi_i+\varphi_{2+i})_x=(\tilde{\phi}_i)_x-(\tilde{\kappa}_i)_x(-\varphi_i+\varphi_{2+i}),
\end{equation}
where $\varphi=(\varphi_1, \varphi_2,\varphi_3,\varphi_4)^\top$ with $\varphi_i=(U_i)_t$
and $\varphi_{i+2}=(U_i)_t+( U_i)_x$ for $i=1,2$.
Integrating each side of equation \eqref{solutionform-1} along $Q_i$, we obtain\footnote{\eqref{solutionform-2} can be
strictly derived by a standard approximation on $u$ (with respect to the $C^1$--norm) by a series of $C^2$ functions on $R(\delta_2)$.}
\begin{equation}\label{solutionform-2}
\begin{split}
(-\varphi_i+\varphi_{2+i})(t,x)
=&\, (U_i)_x(\tau_i(t,x),y_i(t,x))\\
&+\int_{\tau_i(t,x)}^t\big((\tilde{\phi}_i)_x-(\tilde{\kappa}_i)_x(-\varphi_i+\varphi_{2+i})\big)(\tau,X_i(\tau;t,x))\,\text{d}\tau.
\end{split}
\end{equation}

Then differentiating the boundary conditions in \eqref{linearproblem} on $x=t$ yields
\begin{equation}\label{solutionform-3}
(U_1)_t+(U_1)_x=(G_1)_t,
\end{equation}
which, along with the equations in \eqref{linearproblem}, leads to
\begin{equation}\label{solutionform-4}
(U_1)_x=\frac{(G_1)_t-\tilde{\phi}_1}{1-\tilde{\kappa}_1} \qquad \text{on $x=t$}.
\end{equation}
It follows from \eqref{solutionform-2} and \eqref{solutionform-4} that
\begin{equation}\label{solutionform-5}
(-\varphi_1+\varphi_{3})(t,x)=J_1(t,x),
\end{equation}
where
\begin{equation}\label{solutionform-55}
\begin{split}
J_1(t,x)=&\,\frac{1}{1-\tilde{\kappa}_1(\tau_1(t,x),y_1(t,x))}\big((G_1)_t(\tau_1(t,x))-\tilde{\phi}_1(\tau_1(t,x),y_1(t,x))\big)\\
&\, +\int_{\tau_1(t,x)}^t\big((\tilde{\phi}_1)_x-(\tilde{\kappa}_1)_x(-\varphi_1+\varphi_{3})\big)(\tau,X_1(\tau;t,x) )\,\text{d}\tau.
\end{split}
\end{equation}
Similarly, we have
\begin{equation}\label{solutionform-6}
(-\varphi_2+\varphi_{4})(t,x)=J_2(t,x),
\end{equation}
where
\begin{equation}\label{solutionform-66}
\begin{split}
J_2(t,x)=&\,\frac{1}{\tilde{\kappa}_2(\tau_2(t,x),y_2(t,x))}\big(\tilde{\phi}_2(\tau_2(t,x),y_2(t,x))-(G_2)_t(t)\big)\\
&\, +\int_{\tau_2(t,x)}^t\big((\tilde{\phi}_2)_x-(\tilde{\kappa}_2)_x(-\varphi_2+\varphi_{4})\big)(\tau,X_2(\tau;t,x) )\,\text{d}\tau.
\end{split}
\end{equation}

On the other hand, it follows directly from the equations in \eqref{linearproblem} that
\begin{equation}\label{solutionform-7}
(1-\tilde{\kappa}_i)\varphi_i+\tilde{\kappa}_i\varphi_{2+i}=\tilde{\phi}_i \qquad\,\,\mbox{for $i=1,2$},
\end{equation}
which, along with \eqref{solutionform-5} and \eqref{solutionform-6}, implies that
\begin{equation}\label{solutionform-8}
\varphi_i(t,x)=\tilde{\phi}_i(t,x)-(\tilde{\kappa}_iJ_i)(t,x),\quad \varphi_{2+i}(t,x)=\tilde{\phi}_i(t,x)+\big((1-\tilde{\kappa}_i)J_i\big)(t,x).
\end{equation}

Based on the formulas in  \eqref{solutionform-8}, we have the following lemma.
\begin{lemma}\label{estimate-1}
There exists a positive constant  $\delta_3\leq \delta_2$ such that, for any $0<\delta\leq \delta_3$,
operator $\Phi$ actually maps  $\Sigma(\delta | M_1)$ into itself.
\end{lemma}

\begin{proof}
Hereinafter, $E_1=E_1(M_1)\geq 1$  denotes a generic
constant that depends only on $M_1$, independent of the choice of $\psi$.

First, for any $\psi\in  \Sigma(\delta_2| M_1)$,  one has
\begin{equation}\label{solutionform-9}
\|\Gamma(\psi)\|_{\delta_2}\leq E_1.
\end{equation}
It follows from \eqref{solutionform} that
\begin{equation}\label{solutionform-10}
\|U\|_{\delta_2}\leq  \|G\|_{\delta_2}+\delta_2\|\tilde{\phi}\|_{\delta_2}
\leq  E_1 (\|G_t\|_{\delta_2}+1)\iota(\delta_2),
\end{equation}
where
$\iota(\eta)=\max_{i=1,2}\big\{\omega(\eta|\tilde{\kappa}_i),\omega(\eta|\tilde{\phi}_i),\eta\big\}$,
and the  domain  of $(t,x)$ is  $R(\delta_2)$.

Denote
\begin{equation}\label{solutionform-1aa}
\sigma(\eta):=\max_{i=1,2}\big\{\omega(\eta|\kappa_i(t,x,\psi)),\omega(\eta|\phi_i(t,x,\psi)),\eta\big\},
\end{equation}
where the domain of $(t,x,\psi)$ is $\{(t,x)\in R(\delta_2)\,:\, |\psi|\leq 1\}$. It is direct to see that
\begin{equation}\label{solutionform-11bb}
\iota(\eta)\leq E_1 \sigma(\eta) \qquad \text{for any $\psi\in \Sigma(\delta_2|M_1)$}.
\end{equation}
Moreover, on $R(\delta_2)$,
\begin{equation}\label{solutionform-12}
|t-\tau_i(t,x)|\leq t \leq \delta_2,\quad |x-y_i(t,x)|\leq \|\tilde{\kappa}\|_{\delta_2} |t-\tau_i(t,x)|\leq \|\tilde{\kappa}\|_{\delta_2}\delta_2,
\end{equation}
which, along with  \eqref{tezhengqulv}, yields that
\begin{align}
&\Big\|\frac{1-\tilde{\kappa}_1(t,x)}{1-\tilde{\kappa}_1(\tau_1,y_1)}\Big\|_{\delta_2} +\Big\|\frac{1}{1-\tilde{\kappa}_1(\tau_1,y_1)}\Big\|_{\delta_2} +\Big\|\frac{\tilde{\kappa}_2(t,x)}{\tilde{\kappa}_2(\tau_2,y_2)}\Big\|_{\delta_2}  \leq  1+E_1\iota(\delta_2),
  \label{solutionform-13a}\\[1mm]
&\Big\|\frac{\tilde{\kappa}_1(t,x)-\tilde{\kappa}_1(\tau_1,y_1)}{1-\tilde{\kappa}_1(\tau_1,y_1)}\Big\|_{\delta_2} +\Big\|\frac{\tilde{\kappa}_2(t,x)-\tilde{\kappa}_2(\tau_2,y_2)}{\tilde{\kappa}_2(\tau_2,y_2)}\Big\|_{\delta_2} \leq  E_1\iota(\delta_2),
\label{solutionform-13b}\\[1mm]
& \Big\|\frac{-\tilde{\kappa}_1(t,x)}{1-\tilde{\kappa}_1(\tau_1,y_1)}\Big\|_{\delta_2}
+\Big\|\frac{1-\tilde{\kappa}_2(t,x)}{\tilde{\kappa}_2(\tau_2,y_2)}\Big\|_{\delta_2} \leq E_1\iota(\delta_2).\label{solutionform-13c}
\end{align}

Then it follows from \eqref{solutionform-55}, \eqref{solutionform-66}, \eqref{solutionform-8},
\eqref{solutionform-11bb}--\eqref{solutionform-13c}, and the Gronwall inequality  that there exists a positive
constant $\delta_3\leq \delta_2$ such that, if $\delta\leq \delta_3$, for any $\psi\in \Sigma(\delta|M_1)$, $U=\Phi \psi$ satisfies the estimate:
\begin{equation}\label{solutionform-14}
\|\varphi\|_{\delta}\leq  \big(1+E_1 \sigma(\delta)\big) \big(\|G_t\|_{\delta}+\|\tilde{\phi}\|_{\delta}\big)
          +E_1\sigma(\delta),
\end{equation}
which, along with \eqref{jjjjj}, gives the desired conclusion.
\end{proof}

\noindent
\textbf{3}. {\it The solution operator's contraction of the linear problem}.
We first show the contraction of $\Phi$ with respect to the  $C^0$--norm.

\begin{lemma}\label{contraction}
There exists a positive constant  $\delta_4\leq \delta_3$ such that, for any $0<\delta\leq \delta_4$,
$\Phi$ is a contraction operator with respect to the $C^0$--norm from $\Sigma(\delta | M_1)$ into itself.
That is, if $\psi',\psi''\in \Sigma(\delta | M_1)$, then $U'=\Phi \psi'$, $U''=\Phi  \psi''$, and
there exists some constant $\mu\in [0,1)$ such that
\begin{equation}\label{solutionform-16}
\|U'-U''\|_\delta\leq \mu \|\psi'-\psi''\|_\delta \qquad \mbox{on $R(\delta)$}.
\end{equation}
\end{lemma}

\begin{proof} Denote $\overline{U}=U'-U''$. Then $\overline{U}$ is the solution of the following  problem
\begin{equation}\label{linearproblem--difference}
\begin{cases}
(\overline{U}_i)_t+\tilde{\kappa}_i(t,x)(\overline{U}_i)_x=\overline{\phi}_i(t,x) \qquad \text{for $(t,x)\in R(\delta)$ and  $i=1,2$},\\[1mm]
\overline{U}_1=0  \qquad \text{on $x=t$},\\[1mm]
\overline{U}_2=0 \qquad \text{on $x=0$},
\end{cases}
\end{equation}
where
$\overline{\phi}_i(t,x)=\phi_i(t,x,\psi')-\phi_i(t,x,\psi'')-(\kappa_i(t,x,\psi')-\kappa_i(t,x,\psi''))U''_x$ for $i=1,2$.

Similarly to the derivation of \eqref{solutionform-10}, we obtain \eqref{solutionform-16}.
\end{proof}

\medskip
Next, let us introduce a particular subset of $\Sigma(\delta | M_1)$:
\begin{equation}\label{solutionform-17}
\Sigma(\delta | M_1,M_2(\eta))=\big\{\psi(t,x)\,:\, \psi \in \Sigma(\delta | M_1), \Lambda(\eta|\chi)\leq M_2(\eta)\big\},
\end{equation}
where $\Lambda(\eta|\chi)$ is the modified modulus of continuity of $\chi$, and $M_2(\eta)$ for $0\leq \eta<\infty$
is a nonnegative value (to be specified later) with the property that
\begin{equation}\label{solutionform-18}
  M_2(\eta)   \rightarrow 0 \qquad \text{as $\eta \rightarrow 0$}.
\end{equation}
Then we have
\begin{lemma}\label{contraction-2}
There exist $M_2(\eta)$ and a positive constant $\delta_5\leq \delta_4$ such that,
for any $0<\delta\leq \delta_5$, $\Phi$  maps $\Sigma(\delta | M_1,M_2(\eta))$ into itself.
\end{lemma}

\begin{proof}
First, it follows from \eqref{curve--1} that
\begin{equation*}
\displaystyle
(X_i)_x(\tau;t,x)= e^{\int_t^\tau(\tilde{\kappa}_i)_y (\nu,X_i(\nu;t,x))\,\text{d}\nu},\, \, \,
(X_i)_t(\tau;t,x)=-\tilde{\kappa}_i(t,x)(X_i)_x(\tau;t,x),
\end{equation*}
which, along with \eqref{tezhengqulv2}, yields that,  for any constant  $\varsigma\in [0, 1]$,
\begin{equation}\label{solutionform-24}
\displaystyle
(\tau_1)_t+\varsigma (\tau_1)_x=\frac{\varsigma-\tilde{\kappa}_1(t,x)}{1-\tilde{\kappa}_1(\tau_1,y_1)}
e^{\int_\tau^t(\tilde{\kappa}_1)_y (\nu,X_i(\nu;t,x))\,\text{d}\nu}.
\end{equation}
Then, for any $\psi\in \Sigma(\delta | M_1)$  on $R(\delta)$,
\begin{equation}\label{solutionform-25}
\displaystyle
\|(\tau_1)_t+\varsigma(\tau_1)_x\|_\delta\leq 1+E_1\iota(\delta).
\end{equation}
Similarly, we have
\begin{equation}\label{solutionform-26}
\displaystyle
\|(\tau_2)_t+\varsigma (\tau_2)_x\|_\delta\leq 1+E_1\iota(\delta),
\end{equation}
which, together with \eqref{solutionform-25}, yields that
\begin{equation}\label{solutionform-26555}
\displaystyle
 \Lambda(\eta|\tau_i)\leq \sup_{0\leq \varsigma \leq 1}\|(\tau_i)_t+\varsigma (\tau_i)_x\|_\delta \eta \leq (1+E_1\iota(\delta))\eta
\qquad \mbox{for $i=1,2$}.
\end{equation}

Second, it  follows from \eqref{solutionform-8}, Lemma  \ref{fujiaxingzhi}, \eqref{solutionform-26555},
and the Gronwall inequality  that
\begin{equation}\label{solutionform-22}
 \Lambda(\eta|\chi^1)\leq \big(1+E_1\sigma(\delta)\big)w((1+E_1\sigma(\delta))\eta |G_t)+E_1\Lambda_0(\eta),
\end{equation}
where
$\Lambda_0(\eta):=\big(\|G_t\|_\delta+1\big)\big(\sigma(\eta)+\sigma(\delta)w(\eta|\Gamma(\psi))\big).$

Denote
$$
\Gamma^*(\psi)=\big\{\kappa_i, (\kappa_i)_x, (\kappa_i)_{U_j}, \phi_i, (\phi_i)_x, (\phi_i)_{U_j}, G_i, (G_i)_t \,:\, i,j=1,2 \big\}.
$$
It follows from   Lemma  \ref{fujiaxingzhi}  that, for any $\psi\in \Sigma(\delta | M_1)$ on $R(\delta)$ for $\delta\leq \delta_4$,
\begin{equation}\label{solutionform-30}
w(\eta|\Gamma(\psi))\leq  E_{1}\big(w^*(\eta)+\Lambda(\eta|\chi)\big),
\end{equation}
where $w^*(\eta)$ denotes the modulus of the continuity of $\Gamma^*(\psi)$ on the domain of
$$
\big\{(t,x,\psi): (t,x)\in R(\delta_4), \ |\psi|\leq 1\big\}.
$$
This implies that
\begin{equation}\label{solutionform-31}
\sigma(\eta)\leq E_{1}w^*(\eta).
\end{equation}
Then there exists a positive constant  $\delta_5\leq \delta_4$ such that, for any $0<\delta\leq \delta_5$,
\begin{equation}\label{solutionform-32}
 \Lambda(\eta|\chi^1)\leq  \zeta_1\Lambda(\eta|\chi)+E_{1}\big(w^*(\eta)+\sigma(\eta)\big)\qquad\,\,\mbox{on $R(\delta)$},
\end{equation}
where we have used estimate \eqref{solutionform-22},  and constant $\zeta_1$  satisfies
$0<\zeta_1<1$.

Set
$$
M_2(\eta)=\frac{E_{1}}{1-\zeta_1}\big(w^*(\eta)+\sigma(\eta)\big).
$$
Then  it follows from \eqref{solutionform-32} that, if  $ \Lambda(\eta|\chi)\leq M_2(\eta)$,
$$
\Lambda(\eta|\chi^1)\leq M_2(\eta) \qquad\mbox{on $R(\delta)$ for $\delta\leq \delta_5$}.
$$
This completes the proof of this lemma.
\end{proof}

\smallskip
\noindent
\textbf{4}. {\it Local existence of the nonlinear problem}.
We now prove this theorem. It follows from Lemmas \ref{contraction}--\ref{contraction-2} that,
for any $0<\delta\leq \delta_5$,  $\Phi$  is a contraction operator (with respect to the $C^0$--norm)
mapping  $\Sigma(\delta | M_1,M_2(\eta))$ into itself.
Moreover, $\Sigma(\delta | M_1,M_2(\eta))$  is clearly a compact, closed set in $C^0(R(\delta))$.
Therefore, for $0<\delta\leq \delta_5$, $\Phi$ possesses a unique fixed point $U\in \Sigma(\delta | M_1,M_2(\eta))$,
{\it i.e.}, $\Phi U=U$, which  admits a $C^1$ solution on $R(\delta)$
to the boundary value  problem \eqref{typicalboundaryproblem}.
\end{proof}

\newpage

\bibliographystyle{siamplain}

\end{document}